\newtheorem{lemma}{Lemma}[section]
\newtheorem{proposition}{Proposition}[section]
\newtheorem{theorem}{Theorem}[section]
\newtheorem{corollary}{Corollary}[section]
\theoremstyle{definition}
\newtheorem{definition}{Definition}[section]
\newtheorem{remark}{Remark}[section]
\newtheorem{example}{Example}[section]
\numberwithin{equation}{section}
\begin{document}


\title[\(5\)-Class towers of cyclic quartic fields]
{\(5\)-Class towers of cyclic quartic fields \\  arising from quintic reflection}

\author[A. Azizi]{Abdelmalek Azizi$^1$}
\address{$^1$Department of Mathematics, Faculty of Sciences, Mohammed First University, 60000 Oujda, Morocco}
\email{abdelmalekazizi@yahoo.fr}

\author[Y. Kishi]{Yasuhiro Kishi$^2$}
\address{$^2$Aichi University of Education, Aichi, Japan}
\email{ykishi@auecc.aichi-edu.ac.jp}

\author[D. C. Mayer]{Daniel C. Mayer$^3$}
\address{$^3$Naglergasse 53, 8010 Graz, Austria}
\email{quantum.algebra@icloud.com}
\urladdr{http://www.algebra.at}
\thanks{Research of the third author supported by the Austrian Science Fund (FWF): P 26008-N25}

\author[M. Talbi]{Mohamed Talbi$^4$}
\address{$^4$Regional center of Education and Training, 60000 Oujda, Morocco}
\email{ksirat1971@gmail.com}

\author[Mm. Talbi]{Mohammed Talbi$^5$}
\address{$^5$Regional center of Education and Training, 60000 Oujda, Morocco}
\email{talbimm@yahoo.fr}

\subjclass[2010]{Primary 11R37, 11R29, 11R11, 11R16, 11R20, 11Y40; Secondary 20D15}
\keywords{\(5\)-class field tower, \(5\)-principalization, quadratic fields, \(5\)-dual cyclic quartic fields, Frobenius fields;
finite \(5\)-groups, Schur \(\sigma\)-groups}

\date{September 06, 2019}

%
%
%
%
%
%
%
%

\begin{abstract}
Let \(\zeta_5\) be a primitive fifth root of unity
and \(d\ne 1\) be a quadratic fundamental discriminant not divisible by \(5\).
For the \(5\)-dual cyclic quartic field \({M}=\mathbb{Q}((\zeta_5-\zeta_5^{-1})\sqrt{d})\)
of the quadratic fields \({k}_1=\mathbb{Q}(\sqrt{d})\) and \({k}_2=\mathbb{Q}(\sqrt{5d})\)
in the sense of the quintic reflection theorem,
the possibilities for the isomophism type of the Galois group
\(\mathrm{G}_5^{(2)}{{M}}=\mathrm{Gal}({M}_5^{(2)}/{M})\)
of the second Hilbert \(5\)-class field \({M}_5^{(2)}\) of \({M}\) are investigated,
when the \(5\)-class group \(\mathrm{Cl}_5(M)\) is elementary bicyclic of rank two.
Usually, the maximal unramified pro-\(5\)-extension \({M}_5^{(\infty)}\) of \(M\)
coincides with \({M}_5^{(2)}\) already.
The precise length \(\ell_5{M}\) of the \(5\)-class tower of \({M}\)
is determined, when  \(\mathrm{G}_5^{(2)}{{M}}\) is of order less than or equal to \(5^5\).
Theoretical results are underpinned by the actual computation of all \(83\), respectively \(93\), cases
in the range \(0<d<10^4\), respectively \(-2\cdot 10^5<d<0\).
\end{abstract}

%
%
%
%
%
%
%
%

\maketitle

%
%
%
%
%
%
%
%

\section{Introduction}
\label{s:Intro}
\noindent
The present article arose from the desire to generalize our results
\cite{ATTDM}
for the second \(3\)-class group \(\mathrm{Gal}\left({k}_3^{(2)}/{k}\right)\)
of the bicyclic biquadratic field \({k}=\mathbb{Q}\left(\sqrt{-3},\sqrt{d}\right)\),
which is the compositum of \(3\)-dual quadratic fields \({k}_1=\mathbb{Q}(\sqrt{d})\) and \({k}_2=\mathbb{Q}(\sqrt{-3d})\)
in the cubic reflection theorem,
to the situation of the quintic reflection theorem.

The precise statement of both reflection theorems requires the concept of \textit{virtual units}.
Let \(p\) be a prime number and \(K\) be a number field with multiplicative group 
\({K}^{\times}={K}\setminus\lbrace 0\rbrace\),
maximal order \(\mathcal{O}\), unit group \(U\), fractional ideal group \(\mathcal{I}\), and \(p\)-class rank \(\varrho_p\).
The quotient \(V_p=I_p/\left({K}^{\times}\right)^p\), where
$$I_p=\left\lbrace\alpha\in {K}^{\times}\mid\alpha\mathcal{O}=\mathfrak{a}^p \text{ for some } \mathfrak{a}\in\mathcal{I}\right\rbrace,$$
is an elementary abelian \(p\)-group of rank \(\sigma_p=\varrho_p+\dim_{\mathbb{F}_p}\left(U/U^p\right)\)
and is called the \(p\)-\textit{Selmer group} of non-trivial \(p\)-\textit{virtual units}, that is,
generators of principal \(p\)th powers of ideals of \({K}\).
We refer to \(\sigma_p\) as the \(p\)-\textit{Selmer rank} of \({K}\).

%
%
%
%
%
%
%
%

\subsection{Cubic reflection theorem}
\label{ss:CubicReflection}
\noindent
It is well known that the \(3\)-Selmer ranks \(\sigma_3({k}_1)\) and \(\sigma_3({k}_2)\)
of \(3\)-dual quadratic fields \({k}_1=\mathbb{Q}(\sqrt{d})\) and \({k}_2=\mathbb{Q}(\sqrt{-3d})\) (\(d>0\) square-free)
with respect to the quadratic cyclotomic mirror field \(\mathbb{Q}(\sqrt{-3})=\mathbb{Q}(\zeta_3)\), \(\zeta_3=\exp(2\pi i/3)\),
satisfy the \textit{cubic reflection theorem}
\begin{equation}
\label{eqn:3Reflection}
\sigma_3({k}_2) = \sigma_3({k}_1)-\delta,
\end{equation}
\noindent
which is a consequence of comparing the numbers of cyclic cubic extensions of \({k}_1\) and \({k}_2\)
which are unramified outside  \(3\)
from the viewpoint of both, class field theory and Kummer theory.
The invariant \(0\le\delta\le 1\) depends on the \(3\)-virtual units of \({k}_1\) and \({k}_2\).
More precisely, we have
\begin{equation}
\label{eqn:3Primary}
\delta=
\begin{cases}
0, & \text{ if } V_3(k_2) \text{ (imaginary)} \text{ contains a \(3\)-virtual unit which is not \(3\)-primary,}\\[1mm]
1, & \text{ if } V_3(k_1) \text{ (real)} \text{ contains a \(3\)-virtual unit which is not \(3\)-primary.}
\end{cases}
\end{equation}

%
%
%
%
%
%
%
%

\subsection{Quintic reflection theorem}
\label{ss:QuinticReflection}
\noindent
If \(d\ne 1\) denotes a square-free integer prime to \(5\),
then the \(5\)-Selmer ranks \(\sigma_5({k}_1)\), \(\sigma_5({k}_2)\)
of associated quadratic fields \({k}_1=\mathbb{Q}(\sqrt{d})\), \({k}_2=\mathbb{Q}(\sqrt{5d})\)
and the \(5\)-class rank \(\varrho_5({M})\)
of their \(5\)-dual cyclic quartic field \({M}=\mathbb{Q}\left((\zeta_5-\zeta_5^{-1})\sqrt{d}\right)\), \(\zeta_5=\exp(2\pi i/5)\),
with respect to the quartic cyclotomic mirror field \({k}_0=\mathbb{Q}(\zeta_5)\)
satisfy the \textit{quintic reflection theorem}
\begin{equation}
\label{eqn:5Reflection}
\varrho_5({M})=\sigma_5({k}_1)+\sigma_5({k}_2)-\delta_1-\delta_2,
\end{equation}
\noindent
where the invariants \(0\le\delta_1,\delta_2\le 1\) depend
on the \(5\)-virtual units of \({k}_1\) and \({k}_2\)
\cite[p.\,2]{Ki}.
The formula is derived by comparing the numbers of
cyclic quintic extensions of \({k}_1\), \({k}_2\) and \({M}\)
which are unramified outside of \(5\).
The maximal real subfield of \({k}_0=\mathbb{Q}(\zeta_5)\)
is the quadratic field \({k}_0^+=\mathbb{Q}(\sqrt{5})\).

%
%
%
%
%
%
%
%

\subsection{Overview}
\label{ss:Overview}
\noindent
The layout of this article is as follows.
In \S
\ref{s:GaloisAction},
we prove that
the action of the absolute Galois group \(\mathrm{Gal}(M/\mathbb{Q})\)
on the \(5\)-class group \(\mathrm{Cl}_5(M)\)
considerably reduces the possibilities for the metabelianization \(\mathrm{G}_5^{(2)}{M}\)
of the \(5\)-class tower group \(\mathrm{G}_5^{(\infty)}{M}\) of \(M\).
In \S
\ref{s:FrobeniusExtensions},
it is shown that the six unramified cyclic quintic relative extensions \(E_i/M\), \(1\le i\le 6\),
give rise to absolute extensions \(E_i/\mathbb{Q}\)
which are either Frobenius or non-Galois.
Using class number relations for the dihedral subextensions \(E_i/k_0^+\)
of \(E_i/\mathbb{Q}\),
we determine further constraints for the second \(5\)-class group \(\mathrm{G}_5^{(2)}{M}\),
the \(5\)-class tower group \(\mathrm{G}_5^{(\infty)}{M}\),
and the length \(\ell_5\) of the \(5\)-class tower
in \S
\ref{s:Metabelianization}.
The paper concludes with tables of concrete numerical realizations
in \S
\ref{s:Tables}
which underpin all theoretical statements
and additionally reveal the statistical distribution of possible cases.

%
%
%
%
%
%
%
%
\vskip 7mm

\section{\(p\)-Principalization enforced by Galois action}
\label{s:GaloisAction}
\noindent
The generating automorphism \(\sigma\)
of a cyclic number field \({F}/\mathbb{Q}\) of degree \(d\)
with Galois group \(\mathrm{Gal}({F}/\mathbb{Q})=\langle\sigma\rangle\)
acts on the class group \(\mathrm{Cl}({F})\) of \({F}\)
and thus also on the higher \(p\)-class groups \(\mathrm{G}_p^{(n)}{{F}}\)
with \(n\in\mathbb{N}\cup\lbrace\infty\rbrace\),
for a fixed prime number \(p\).
When \(d\) and \(p\) are coprime,
a remarkable restriction of the possibilities for the
metabelian second \(p\)-class group \(\mathfrak{M}=\mathrm{G}_p^{(2)}{{F}}\)
and consequently for the transfer kernel type \(\varkappa({F})\) of \({F}\)
is due to the fact that
the trace \(T_\sigma=\sum\limits_{\substack{i=0}}^{d-1}\sigma^i\) of \(\sigma\) annihilates
the commutator quotient of all the groups \(\mathrm{G}_p^{(n)}{{F}}\).

%
%
%
%

\begin{definition}
\label{dfn:AnnTrace}
Let \(p\) be a prime number and
\(G\) be a pro-\(p\)-group with finite abelianization \(G/G^\prime\).
Suppose that \(d\ge 2\) is a fixed integer.
\(G\) is said to be a \(\sigma\)-\textit{group of degree} \(d\),
if \(G\) possesses an automorphism \(\sigma\) of order \(d\)
whose trace
$$
T_\sigma
=
\sum\limits_{\substack{j=0}}^{d-1}\,\sigma^{j}
\in\mathbb{Z}\lbrack\mathrm{Aut}(G)\rbrack
$$
annihilates \(G\) modulo \(G^\prime\),
that is, if there exists \(\sigma\in\mathrm{Aut}(G)\) such that \(\mathrm{ord}(\sigma)=d\) and
$$x^{T_\sigma}=\prod\limits_{\substack{j=0}}^{d-1}\,\sigma^{j}(x)\in G^\prime $$
for all \(x\in G\).
\end{definition}

%
%
%
%

We show that an epimorphism with characteristic kernel
preserves the property of being a \(\sigma\)-group of degree \(d\).

%
%
%
%

\begin{theorem}
\label{thm:Epimorphism}
Let \(\phi:\,G\to H\) be an epimorphism of groups,
whose kernel \(\ker(\phi)\) is characteristic in \(G\).
If \(G\) is a \(\sigma\)-group of degree \(d\) coprime to \(p\),
then \(H\) is also a \(\sigma\)-group of degree \(d\).
\end{theorem}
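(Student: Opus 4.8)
The plan is to transport the \(\sigma\)-structure along \(\phi\) and then deal separately with the order. First, since \(\ker(\phi)\) is characteristic in \(G\) it is in particular stable under \(\sigma\in\mathrm{Aut}(G)\); hence \(\sigma\) descends to a well-defined endomorphism \(\bar\sigma\) of \(H=G/\ker(\phi)\) via \(\bar\sigma(\phi(x))=\phi(\sigma(x))\), and the same prescription applied to \(\sigma^{-1}\) shows \(\bar\sigma\in\mathrm{Aut}(H)\) with the intertwining relation \(\bar\sigma\circ\phi=\phi\circ\sigma\). Since \(\bar\sigma^{d}=\overline{\sigma^{d}}=\mathrm{id}_H\), the order \(e:=\mathrm{ord}(\bar\sigma)\) divides \(d\). (Here one tacitly assumes \(H\neq 1\), i.e. \(\ker(\phi)\subsetneq G\), since the trivial group admits no automorphism of order \(d\ge 2\).)

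Next I would verify that the length-\(d\) trace annihilates \(H\) modulo \(H^\prime\), and this part is clean and order-independent. For \(y=\phi(x)\in H\) the intertwining relation gives \(\prod_{j=0}^{d-1}\bar\sigma^{j}(y)=\prod_{j=0}^{d-1}\phi(\sigma^{j}(x))=\phi\!\left(\prod_{j=0}^{d-1}\sigma^{j}(x)\right)=\phi(x^{T_\sigma})\). Because \(G\) is a \(\sigma\)-group of degree \(d\) one has \(x^{T_\sigma}\in G^\prime\), and surjectivity of \(\phi\) yields \(\phi(G^\prime)=H^\prime\); hence \(\prod_{j=0}^{d-1}\bar\sigma^{j}(y)\in H^\prime\) for every \(y\in H\). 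Thus the degree-\(d\) trace relation is inherited by \(\bar\sigma\) regardless of the actual value of \(e\).

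The \emph{crux}, and the only place where the coprimality of \(d\) and \(p\) is genuinely needed, is to exhibit an automorphism of \(H\) of order \emph{exactly} \(d\) carrying this trace relation. The natural candidate is \(\bar\sigma\), and to compute its order I would pass to the Frattini quotient: the kernel of \(\mathrm{Aut}(G)\to\mathrm{Aut}(G/\Phi(G))\) is a pro-\(p\)-group, so a coprime automorphism has the same order as the \(\mathbb{F}_p\)-linear map it induces on \(G/\Phi(G)\), and likewise for \(H\). This reduces everything to linear algebra over \(\mathbb{F}_p\): since \(p\nmid d\), Maschke's theorem makes \(G/\Phi(G)\) a semisimple \(\mathbb{F}_p[\langle\sigma\rangle]\)-module splitting into cyclotomic isotypic components, and the inherited relation \(\sum_{j=0}^{d-1}\sigma^{j}=0\) forces the trivial component to vanish (the trace acts as the unit \(d\) there). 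The main obstacle is that on passing to the quotient certain isotypic components may be suppressed, so that \(e\) can genuinely be a \emph{proper} divisor of \(d\) — this already happens for suitable abelian \(G\) — and then \(\bar\sigma\) is not itself a witness. Overcoming this is where the characteristic nature of \(\ker(\phi)\) is indispensable: it forces the image of \(\ker(\phi)\) in \(G/\Phi(G)\) to be stable under the whole image of \(\mathrm{Aut}(G)\) in \(\mathrm{GL}(G/\Phi(G))\), which one must leverage to produce (possibly after modifying \(\bar\sigma\) by a root-of-unity automorphism on the surviving components) an order-\(d\) automorphism of \(H\) satisfying the same vanishing trace, thereby certifying \(H\) as a \(\sigma\)-group of degree \(d\).
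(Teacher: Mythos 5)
Your first two paragraphs are correct and coincide with the paper's entire visible proof: the paper obtains the induced automorphism \(\hat{\sigma}\in\mathrm{Aut}(H)\) with \(\hat{\sigma}\circ\phi=\phi\circ\sigma\) by citing \cite[Th.~6.2]{Ma9} (you construct it by hand from the \(\sigma\)-stability of the characteristic kernel, which is fine), extends the intertwining relation to all powers, and then performs exactly your computation \(y^{T_{\hat{\sigma}}}=\phi(x^{T_\sigma})\in\phi(G^\prime)=H^\prime\). The paper stops there; in particular it nowhere verifies that \(\mathrm{ord}(\hat{\sigma})=d\), so, read literally against Definition~\ref{dfn:AnnTrace}, it establishes only that \(\hat{\sigma}^d=1\) and that the \(d\)-term trace annihilates \(H\) modulo \(H^\prime\).

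The genuine gap is in your third paragraph, which by your own framing is the crux, and which is a plan rather than an argument. Your observation that the order can drop is correct: take \(G=C_{25}\times C_5=\langle a\rangle\times\langle b\rangle\) with \(p=5\), \(d=4\), \(\sigma(a)=a^{-1}\), \(\sigma(b)=b^2\); then \(\mathrm{ord}(\sigma)=4\) and \(x^{T_\sigma}=1\) for all \(x\), the kernel \(N=\Omega_1(G)=\langle a^5\rangle\times\langle b\rangle\) is characteristic, \(H=G/N\simeq C_5\), and \(\bar{\sigma}\) is inversion, of order \(2\). But you never produce the required order-\(d\) automorphism in such a situation. Your proposed repair --- modifying \(\bar{\sigma}\) by a ``root-of-unity automorphism on the surviving components'' --- is an operation on the vector space \(H/\Phi(H)\); the missing (and hard) step is to lift the modified semisimple linear map to an automorphism of the group \(H\), which for non-abelian \(H\) is not possible in general, since the image of \(\mathrm{Aut}(H)\) in \(\mathrm{GL}(H/\Phi(H))\) is usually a proper subgroup. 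The \(\mathrm{Aut}(G)\)-stability of the image of \(\ker(\phi)\) in \(G/\Phi(G)\) produces no automorphisms of \(H\) beyond the induced ones, so it cannot supply that lift; indeed, in the example above the conclusion of the theorem survives only because \(C_5\) happens to possess the unrelated automorphism \(x\mapsto x^2\), not because of anything induced from \(G\). So either one reads Definition~\ref{dfn:AnnTrace} weakly, requiring merely \(\sigma^d=1\) --- in which case your first two paragraphs, like the paper's proof, already suffice and your third paragraph is superfluous --- or the exact-order claim needs an argument that neither your proposal nor the visible part of the paper's proof provides.
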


\begin{proof}
If \(G\) is a \(\sigma\)-group of degree \(d\),
then there exists an automorphism \(\sigma\in\mathrm{Aut}(G)\) of order \(\mathrm{ord}(\sigma)=d\)
such that 
$$x^{T_\sigma}=\prod\limits_{\substack{i=0}}^{d-1}\,\sigma^i(x)\in G^\prime$$
 for all \(x\in G.\)
According to
\cite[Th. 6.2]{Ma9},
there exists an induced automorphism \(\hat{\sigma}\in\mathrm{Aut}(H)\)
such that \(\hat{\sigma}\circ\phi=\phi\circ\sigma\).
By induction we obtain \(\hat{\sigma}^n\circ\phi=\phi\circ\sigma^n\), for all \(n\in\mathbb{Z}\):
let \(n\ge 2\) be an integer and assume that \(\hat{\sigma}^{n-1}\circ\phi=\phi\circ\sigma^{n-1}\), then
\[
\hat{\sigma}^n\circ\phi
=\hat{\sigma}^{n-1}\circ\hat{\sigma}\circ\phi
=\hat{\sigma}^{n-1}\circ\phi\circ\sigma
=\phi\circ\sigma^{n-1}\circ\sigma=\phi\circ\sigma^n.
\]
Furthermore, \((\sigma^{-1})\,\hat{}=\hat{\sigma}^{-1}\).
Now let \(y\in H\).
Since \(\phi\) is surjective, there exists \(x\in G\) with \(\phi(x)=y\), and we obtain, as required,
\[
y^{T_{\hat{\sigma}}}
=
\prod_{i=0}^{d-1}\,\hat{\sigma}^i(y)
=
\prod_{i=0}^{d-1}\,\hat{\sigma}^i(\phi(x))
=
\prod_{i=0}^{d-1}\,\phi(\sigma^i(x))
=
\phi\left(\prod_{i=0}^{d-1}\,\sigma^i(x)\right)
=
\phi\left(x^{T_\sigma}\right)\in\phi(G^\prime)=H^\prime.
\]
\end{proof}

%
%
%
%

\begin{corollary}
\label{cor:ParentOperator}
In a descendant tree \(\mathcal{T}\) of finite \(p\)-groups with edges \(\pi:G\to\pi{G}\),
the property of \textbf{not} being a \(\sigma\)-group of degree \(d\)
is inherited from the parent \(\pi{G}\) by the immediate descendant \(G\).
\end{corollary}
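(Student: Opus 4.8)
The plan is to recognize the corollary as nothing more than the contrapositive of Theorem~\ref{thm:Epimorphism}, once we identify the parent edge $\pi$ as an epimorphism with characteristic kernel. First I would recall the construction of the parent operator in the descendant tree $\mathcal{T}$: for a finite $p$-group $G$ of nilpotency class $c$, the edge sends $G$ to its parent $\pi{G}=G/\gamma_c(G)$, the quotient by the last non-trivial term of the (lower, resp. lower $p$-) central series. The decisive point is that this term, being a member of a central series, is a \emph{characteristic} subgroup of $G$; hence $\pi:G\to\pi{G}$ is an epimorphism whose kernel $\ker(\pi)=\gamma_c(G)$ is characteristic in $G$, which is exactly the hypothesis demanded by Theorem~\ref{thm:Epimorphism}.

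Next, taking $\phi=\pi$ and $H=\pi{G}$, I would invoke Theorem~\ref{thm:Epimorphism} (with $d$ coprime to $p$): if $G$ is a $\sigma$-group of degree $d$, then so is its parent $\pi{G}$. The assertion of the corollary is precisely the logical contrapositive of this implication, namely that if $\pi{G}$ \textbf{fails} to be a $\sigma$-group of degree $d$, then $G$ cannot be a $\sigma$-group of degree $d$ either. Reading the edge $\pi$ as the downward step from parent to immediate descendant, this says exactly that the negative property is transmitted from $\pi{G}$ to $G$ along every edge of $\mathcal{T}$.

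There is essentially no obstacle beyond verifying that the parent map has characteristic kernel; all the genuine content lives in Theorem~\ref{thm:Epimorphism}, whose proof produces the induced automorphism $\hat{\sigma}$ and carries the trace relation through $\phi$. The only subtlety worth flagging is the coprimality of $d$ and $p$, which is required for the cited existence of $\hat{\sigma}$ and is satisfied throughout the intended applications (here $p=5$ and $d=4$, the degree of $M/\mathbb{Q}$).
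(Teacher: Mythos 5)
Your proposal is correct and follows essentially the same route as the paper: identify the parent operator $\pi:G\to\pi{G}=G/\gamma_c{G}$ as an epimorphism whose kernel, being a term of the (lower) central series, is characteristic in $G$, and then apply Theorem~\ref{thm:Epimorphism} in contrapositive form. Your explicit remarks on the contrapositive reading and on the coprimality of $d$ and $p$ are sound refinements of what the paper leaves implicit.
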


\begin{proof}
The parent operator \(\pi:\,G\to\pi{G}\)
is the canonical projection from \(G\) onto the quotient \(\pi{G}=G/\gamma_c{G}\)
by the last non-trivial member \(\gamma_c{G}\), \(c=\mathrm{cl}(G)\),
of the lower central series \((\gamma_i{G})_{i\ge 1}\) of \(G\),
and thus \(\pi\) is an epimorphism with characteristic kernel \(\ker(\pi)=\gamma_c{G}\),
whence Theorem
\ref{thm:Epimorphism}
justifies the claim.
\end{proof}

%
%
%
%

\begin{remark}
\label{rmk:AnnTrace}
A \(\sigma\)-group \(G\) in the classical sense is a \(\sigma\)-group of degree \(2\) in the new sense,
since \(x\sigma(x)\in G^\prime\) is equivalent with \(\sigma(x)G^\prime=x^{-1}G^\prime\).
Such a group is also referred to as a group with \textit{generator inverting} automorphism
or briefly GI-automorphism.
\end{remark}

%
%
%
%

\begin{theorem}
\label{thm:CycQuart}
{\rm (i)} The \(p\)-class tower group \(G_p^{(\infty)}{{F}}\) and
all higher \(p\)-class groups \(G_p^{(n)}{{F}}\) with \(n\ge 2\) of a cyclic quartic number field \({F}\)
are \(\sigma\)-groups of degree \(4\).

{\rm (ii)} When the quadratic subfield \({k}<{F}\) has a trivial \(p\)-class group,
the groups \(G_p^{(\infty)}{{F}}\) and \(G_p^{(n)}{{F}}\) with \(n\ge 2\) are simultaneously \(\sigma\)-groups of degree \(2\).
\end{theorem}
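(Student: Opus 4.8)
The plan is to establish the statement first for the full \(p\)-class tower group \(G_p^{(\infty)}{F}\) and then to propagate it down the derived series by the epimorphism principle of Theorem \ref{thm:Epimorphism}. The starting observation is that every higher \(p\)-class field \(F_p^{(n)}\), and the maximal unramified pro-\(p\)-extension \(F_p^{(\infty)}\) as well, is a \emph{characteristic} extension of \(\mathbb{Q}\): being constructed canonically from \(F\), it is normal over \(\mathbb{Q}\). Hence \(\mathrm{Gal}(F_p^{(\infty)}/\mathbb{Q})\) exists and sits in a short exact sequence
\[
1 \longrightarrow G_p^{(\infty)}{F} \longrightarrow \mathrm{Gal}(F_p^{(\infty)}/\mathbb{Q}) \longrightarrow \mathrm{Gal}(F/\mathbb{Q}) \longrightarrow 1,
\]
with pro-\(p\)-kernel and cyclic quotient \(\langle\sigma\rangle\) of order \(4\) coprime to \(p\). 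By the profinite Schur--Zassenhaus theorem the sequence splits and \(\sigma\) lifts to an element \(\widetilde{\sigma}\) of order \(4\); conjugation by \(\widetilde{\sigma}\) furnishes an honest automorphism \(\bar\sigma\in\mathrm{Aut}(G_p^{(\infty)}{F})\) with \(\bar\sigma^4=\mathrm{id}\), whose action on the abelianization \(\left(G_p^{(\infty)}{F}\right)^{\mathrm{ab}}\cong\mathrm{Cl}_p(F)\), transported through the Artin symbol, is exactly the natural \(\sigma\)-action on the \(p\)-class group.

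The arithmetic heart of part (i) is the trace annihilation. On \(\mathrm{Cl}_p(F)\) the operator \(T_{\bar\sigma}=\sum_{j=0}^{3}\bar\sigma^{j}\) sends a class \([\mathfrak{a}]\) to \([\mathfrak{a}\,\sigma(\mathfrak{a})\,\sigma^2(\mathfrak{a})\,\sigma^3(\mathfrak{a})]=[N_{F/\mathbb{Q}}(\mathfrak{a})\,\mathcal{O}_F]\), the class of the extension to \(F\) of the relative norm ideal. Since \(\mathbb{Q}\) has class number one, this norm ideal is principal in \(\mathbb{Z}\) and its extension is principal in \(F\), so \(T_{\bar\sigma}\) kills \(\mathrm{Cl}_p(F)\). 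Thus \(G_p^{(\infty)}{F}\) is a \(\sigma\)-group of degree \(4\). To reach the finite quotients I would note that the canonical projection \(G_p^{(\infty)}{F}\twoheadrightarrow G_p^{(n)}{F}\) has kernel the \(n\)-th derived subgroup \(\bigl(G_p^{(\infty)}{F}\bigr)^{(n)}\), which is characteristic; Theorem \ref{thm:Epimorphism}, with \(d=4\) coprime to \(p\), then yields that every \(G_p^{(n)}{F}\), \(n\ge2\), is a \(\sigma\)-group of degree \(4\).

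For part (ii) I would repeat the construction relative to the unique quadratic subfield \(k=F^{\langle\sigma^2\rangle}\), for which \(\mathrm{Gal}(F/k)=\langle\sigma^2\rangle\). The composite \(j\circ N_{F/k}\) of the norm \(N_{F/k}\colon\mathrm{Cl}_p(F)\to\mathrm{Cl}_p(k)\) with the extension-of-ideals map \(j\colon\mathrm{Cl}_p(k)\to\mathrm{Cl}_p(F)\) acts on \(\mathrm{Cl}_p(F)\) as the partial trace \(1+\sigma^2\), since \(N_{F/k}(\mathfrak{a})\,\mathcal{O}_F=\mathfrak{a}\,\sigma^2(\mathfrak{a})\). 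When \(\mathrm{Cl}_p(k)=0\) the norm is the zero map, so \(1+\sigma^2\) annihilates \(\mathrm{Cl}_p(F)\), in harmony with the factorization \(T_\sigma=(1+\sigma)(1+\sigma^2)\) used in part (i). Taking \(\tau=\bar\sigma^{2}\), an automorphism with \(\tau^2=\mathrm{id}\) whose trace \(T_\tau=1+\tau\) agrees with \(1+\sigma^2\) on the abelianization, shows that \(G_p^{(\infty)}{F}\) is a \(\sigma\)-group of degree \(2\); descent along the derived series via Theorem \ref{thm:Epimorphism} with \(d=2\) then finishes the claim.

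The step I expect to be most delicate is not the norm computation but the bookkeeping of orders. One must verify that the lifted automorphism is genuinely of order \(4\) on \(G_p^{(\infty)}{F}\) (respectively that \(\tau\) is genuinely of order \(2\)), which, since \(1+\sigma^2=0\) forces \(\sigma^2=-1\) on \(\mathrm{Cl}_p(F)\), is exactly the nontriviality of the \(\sigma\)-action on \(\mathrm{Cl}_p(F)\); it is here that the coprimality of \([F:\mathbb{Q}]\) to \(p\), guaranteeing a clean splitting in the Schur--Zassenhaus step, is indispensable.
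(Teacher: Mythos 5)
Your proposal is correct, and its arithmetic core coincides with the paper's: the degree-\(4\) trace acts on \(\mathrm{Cl}_p(F)\) as the ideal norm down to \(\mathbb{Q}\), hence trivially, the degree-\(2\) trace acts as \(j\circ\mathrm{N}_{F/k}\), hence trivially when \(\mathrm{Cl}_p(k)=1\), and the Artin isomorphism \(\mathrm{G}_p^{(n)}F/\bigl(\mathrm{G}_p^{(n)}F\bigr)^\prime\simeq\mathrm{Cl}_p(F)\) converts this into the \(\sigma\)-group property. Where you differ is in the scaffolding, and there your version is tighter in two places. First, the paper simply takes for granted, in the preamble of \S\ref{s:GaloisAction}, that \(\sigma\) ``acts'' on each \(\mathrm{G}_p^{(n)}F\); your profinite Schur--Zassenhaus splitting of \(1\to\mathrm{G}_p^{(\infty)}F\to\mathrm{Gal}(F_p^{(\infty)}/\mathbb{Q})\to\mathrm{Gal}(F/\mathbb{Q})\to 1\) produces an honest automorphism, which is exactly what Definition~\ref{dfn:AnnTrace} demands. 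Second, the paper treats all levels uniformly by invoking the Artin isomorphism at every \(n\in\mathbb{N}\cup\lbrace\infty\rbrace\), whereas you prove the statement once at \(n=\infty\) and push it down the derived series (whose terms are indeed the kernels of \(\mathrm{G}_p^{(\infty)}F\twoheadrightarrow\mathrm{G}_p^{(n)}F\), and are characteristic) through Theorem~\ref{thm:Epimorphism}; this reuses the epimorphism machinery the paper sets up but never actually applies in its own proof of this theorem. One caveat: your closing remark on ``bookkeeping of orders'' conflates (i) and (ii). In (ii), \(1+\sigma^2=0\) does force inversion, hence order exactly \(2\) whenever \(\mathrm{Cl}_p(F)\neq 1\) and \(p\) is odd. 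But in (i) alone, \(T_\sigma=(1+\sigma)(1+\sigma^2)=0\) does \emph{not} force \(\sigma\) to act with order \(4\) on \(\mathrm{Cl}_p(F)\) --- an action by inversion also has vanishing degree-\(4\) trace --- so the induced automorphism could a priori have order \(2\), while Definition~\ref{dfn:AnnTrace} asks for order exactly \(d\). This is not a gap relative to the paper, whose proof is silent on the same point; it only gets settled in the paper's actual application, where (ii) yields \(\sigma^2=-1\) on \(\mathrm{Cl}_5(M)\neq 1\) and therefore order exactly \(4\).
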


\begin{proof} (i)
The generating automorphism \(\sigma\) of \(F/\mathbb{Q}\)
annihilates the class group \(\mathrm{Cl}(F)\)
when it acts by its trace \(T_\sigma=\sum_{i=0}^3\,\sigma^i\in\mathbb{Z}\lbrack\langle\sigma\rangle\rbrack\),
since 
$$x^{T_\sigma}=\prod_{i=0}^3\,\sigma^i(x)=\mathrm{N}_{F/\mathbb{Q}}(x)\in\mathrm{Cl}(\mathbb{Q})=1$$
for all \(x\in\mathrm{Cl}(F)\).
Of course, the same is true for all \(p\)-class groups \(\mathrm{Cl}_p(F)\) with primes \(p\).
Finally, for any \(n\in\mathbb{N}\cup\lbrace\infty\rbrace\),  we have  the isomorphisms
$$\mathrm{G}_p^n{F}/\left(\mathrm{G}_p^n{F}\right)^\prime\simeq\mathrm{Cl}_p(F).$$

(ii) When the unique (real) quadratic subfield \(k<F\) has trivial \(p\)-class group \(\mathrm{Cl}_p(k)=1\),
 the relative automorphism \(\tau=\sigma^2\in\mathrm{Gal}(F/k)\) with order \(2\)
acts by inversion on \(\mathrm{Cl}_p(F)\), since
$$x^{T_\tau}=x^{1+\tau}=x\cdot\tau(x)=\mathrm{N}_{F/k}(x)\in\mathrm{Cl}_p(k)=1,$$
and thus \(x^\tau=x^{-1}\)
for all \(x\in\mathrm{Cl}_p(F)\).
\end{proof}

%
%
%
%

\begin{remark}
\label{rmk:StrongSigma}
A pro-\(p\)-group \(G\) with finite abelianization \(G/G^\prime\)
is called a \textit{strong} \(\sigma\)-group
if it possesses an automorphism \(\sigma\) of order \(2\)
which acts as inversion on both cohomology groups \(\mathrm{H}^1(G,\mathbb{F}_p)\) and \(\mathrm{H}^2(G,\mathbb{F}_p)\).
We emphasize the following two facts:
\begin{itemize}
\item
An epimorphism does not necessarily preserve the property of being a strong \(\sigma\)-group.
\item
Whereas the group \(\mathrm{G}_p^{(\infty)}{F}\) of a quadratic field \(F\) is a strong \(\sigma\)-group,
according to Schoof
\cite[Lem. 4.1, p.\,217]{Sf},
this is not necessarily the case for a cyclic quartic field \(F\).
See for instance the unusual cases in Theorem
\ref{thm:UnusualSigmaImag}.
\end{itemize}
\end{remark}

In view of our special situation with \(p=5\), \(F=M\), \(\mathrm{Cl}_5(M)=(5,5)\) and \(k=k_0^+\),
we tested finite metabelian \(5\)-groups \(G\) with \(G/G^\prime\simeq (5,5)\) of order \(\lvert G\rvert=3125=5^5\) and coclass \(\mathrm{cc}(G)=2\),
for the property of simultaneously being a \(\sigma\)-group of degree \(4\) and degree \(2\).
These groups are crucial contestants for second \(5\)-class groups \(\mathrm{G}_5^{(2)}{M}\)
and form the stem of Hall's isoclinism family \(\Phi_6\).
(See
\cite[\S3.5, pp.\,445--448]{Ma4}
and
\cite[\S 7, pp. \,93--98]{Ma15}.)
In Table
\ref{tbl:TrfKerStem5Icl6},
the groups are characterized by their identifiers according to James
\cite{Jm}
and the SmallGroups Library
\cite{BEO2}.
An asterisk \(\ast\) marks a Schur \(\sigma\)-group,
and a flag \(f\in\lbrace 0,1\rbrace\) indicates a \(\sigma\)-group of simultaneous degrees \(4\) and \(2\).

%
%
%
%

\renewcommand{\arraystretch}{1.1}
\vskip 3mm 

\begin{table}[ht]
\caption{The Artin pattern of the twelve \(5\)-groups of order \(5^5\) in the stem of \(\Phi_6\)}
\label{tbl:TrfKerStem5Icl6}
\begin{center}
\begin{tabular}{|ll|c|ccc|}
\hline
 \multicolumn{2}{|c|}{Identifier of the \(5\)-Group} & Flag  & \multicolumn{3}{|c|}{\(5\)-Principalization Type}          \\
 James                  & SmallGroup                 & \(f\) & \(\varkappa\) & Cycle Pattern  & Property                  \\
\hline
 \(\Phi_6(2^21)_a\)     & \(\langle 3125,14\rangle\ast\) & 1 & \((123456)\) & \((1)(2)(3)(4)(5)(6)\) & identity           \\
 \(\Phi_6(2^21)_{b_1}\) & \(\langle 3125,11\rangle\ast\) & 1 & \((125364)\) & \((1)(2)(3564)\)       & \(4\)-cycle        \\
 \(\Phi_6(2^21)_{b_2}\) & \(\langle 3125,7\rangle\)      & 1 & \((126543)\) & \((1)(2)(36)(45)\)     & two \(2\)-cycles   \\
 \(\Phi_6(2^21)_{c_1}\) & \(\langle 3125,8\rangle\ast\)  & 0 & \((612435)\) & \((16532)(4)\)         & \(5\)-cycle        \\
 \(\Phi_6(2^21)_{c_2}\) & \(\langle 3125,13\rangle\ast\) & 0 & \((612435)\) & \((16532)(4)\)         & \(5\)-cycle        \\
 \(\Phi_6(2^21)_{d_0}\) & \(\langle 3125,10\rangle\)     & 0 & \((214365)\) & \((12)(34)(56)\)       & three \(2\)-cycles \\
 \(\Phi_6(2^21)_{d_1}\) & \(\langle 3125,12\rangle\ast\) & 0 & \((512643)\) & \((154632)\)           & \(6\)-cycle        \\
 \(\Phi_6(2^21)_{d_2}\) & \(\langle 3125,9\rangle\ast\)  & 0 & \((312564)\) & \((132)(456)\)         & two \(3\)-cycles   \\
\hline
 \(\Phi_6(21^3)_a\)     & \(\langle 3125,4\rangle\)      & 1 & \((022222)\) &                        &nrl. const. with fp.\\
 \(\Phi_6(21^3)_{b_1}\) & \(\langle 3125,5\rangle\)      & 1 & \((011111)\) &                        & nearly constant    \\
 \(\Phi_6(21^3)_{b_2}\) & \(\langle 3125,6\rangle\)      & 1 & \((011111)\) &                        & nearly constant    \\
 \(\Phi_6(1^5)\)        & \(\langle 3125,3\rangle\)      & 1 & \((000000)\) &                        & constant           \\
\hline
\end{tabular}
\end{center}
\end{table}

%
%
%
%

%
%
%
%

\begin{theorem}
\label{thm:Sigma5GroupsDegree4}
A finite \(5\)-group \(G\) with \(G/G^\prime\simeq (5,5)\) which is a \(\sigma\)-group of degree \(4\)
is either of coclass \(\mathrm{cc}(G)=1\)
or isomorphic to one of the two Schur \(\sigma\)-groups \(\langle 3125,i\rangle\) with \(i\in\lbrace 11,14\rbrace\)
or isomorphic to a descendant of one of the capable groups
\(\langle 3125,i\rangle\) with \(i\in\lbrace 3,4,5,6,7\rbrace\).
\end{theorem}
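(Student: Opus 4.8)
The plan is to reduce the classification to the twelve groups of order \(5^5\) in the stem of \(\Phi_6\) recorded in Table~\ref{tbl:TrfKerStem5Icl6}, and then to eliminate the unwanted cases by the inheritance principle of Corollary~\ref{cor:ParentOperator}. First I would dispose of the coclass-\(1\) alternative at once: maximal-class \(5\)-groups automatically have abelianization \((5,5)\), and they are explicitly permitted by the first clause of the conclusion, so no argument is required there. It thus suffices to constrain a group \(G\) with \(G/G^\prime\simeq(5,5)\) and \(\mathrm{cc}(G)\ge 2\).

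The second step is a reduction along the parent operator \(\pi\). Since the last nontrivial lower central term \(\gamma_c G\) (with \(c=\mathrm{cl}(G)\)) is characteristic, each edge \(G\to\pi G\) is an epimorphism with characteristic kernel, and \(\pi G\) again has abelianization \((5,5)\) because the kernel lies in \(G^\prime\). Iterating \(\pi\) until the order reaches \(5^5\) yields an ancestor \(P\) of order \(5^5\) with \(P/P^\prime\simeq(5,5)\); by the coclass theory of two-generator \(5\)-groups (cf.\ \cite{Ma4,Ma15}) this ancestor lies in the stem of \(\Phi_6\). Equivalently, every \(G\) under consideration is a descendant of one of the twelve stem groups.

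The heart of the matter is then the finite computation at level \(5^5\) recorded in the flag column \(f\) of Table~\ref{tbl:TrfKerStem5Icl6}: among the twelve stem groups, exactly the seven with \(f=1\), namely \(\langle 3125,i\rangle\) with \(i\in\{3,4,5,6,7,11,14\}\), are \(\sigma\)-groups of degree \(4\), while the five with \(f=0\), namely \(i\in\{8,9,10,12,13\}\), are not. Because \(d=4\) is coprime to \(p=5\), Corollary~\ref{cor:ParentOperator} applies, and by transitivity the failure to be a \(\sigma\)-group of degree \(4\) propagates from each of the latter five vertices to \emph{all} of its descendants. Hence \(G\) cannot descend from any of \(\langle 3125,i\rangle\), \(i\in\{8,9,10,12,13\}\), and must therefore descend from one of the seven flagged groups. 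Since the two Schur \(\sigma\)-groups \(\langle 3125,11\rangle\) and \(\langle 3125,14\rangle\) are terminal (incapable), whereas \(\langle 3125,i\rangle\), \(i\in\{3,4,5,6,7\}\), are capable, it follows that \(G\) is either isomorphic to one of the two Schur \(\sigma\)-groups or a descendant of one of the five capable groups, as asserted.

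The main obstacle is the structural reduction of the second paragraph. One must verify that the parent chain of \emph{every} group of coclass \(\ge 2\) with abelianization \((5,5)\)—not merely those of coclass exactly \(2\)—terminates at a stem vertex of \(\Phi_6\) at order \(5^5\), rather than at a spurious coclass-\(3\) (class-\(2\)) group of the same order; ruling the latter out, or confirming they carry no degree-\(4\) \(\sigma\)-automorphism, is exactly the delicate point where the coclass theory of two-generator \(5\)-groups is needed. The accompanying computation—checking the seven flags and the (in)capability of the survivors—is routine in itself, but it is indispensable, and it is precisely Corollary~\ref{cor:ParentOperator} that converts this finite order-\(5^5\) verification into a statement about the entire infinite descendant forest.
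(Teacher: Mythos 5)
Your overall strategy is the one the paper intends: the published proof is nothing more than the Magma test of the degree-\(4\) \(\sigma\)-property, and the surrounding material (Table~\ref{tbl:TrfKerStem5Icl6} and Corollary~\ref{cor:ParentOperator}) is meant to supply exactly your steps (3)--(5): eliminate the five stem groups of \(\Phi_6\) that fail the test, propagate the failure to all of their descendants, and split the seven survivors into the two terminal Schur \(\sigma\)-groups and the five capable ones. Two caveats on the computational input, though: the flag \(f\) in Table~\ref{tbl:TrfKerStem5Icl6} is defined to record the \emph{simultaneous} degrees \(4\) and \(2\), so eliminating \(\langle 3125,i\rangle\), \(i\in\{8,9,10,12,13\}\), requires the degree-\(4\) test itself (a group could a priori pass degree \(4\) while failing the simultaneous property); and the terminality of \(\langle 3125,11\rangle\), \(\langle 3125,14\rangle\) versus the capability of the other five are additional computed facts that your argument, like the paper's, must invoke.

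The genuine gap is the reduction in your second paragraph, and your closing paragraph misdiagnoses where the delicacy lies. ``Iterating \(\pi\) until the order reaches \(5^5\)'' is not a legitimate operation: an edge of the descendant tree can drop the order by more than one factor of \(5\) (the kernel \(\gamma_c G\) need not have order \(5\); indeed the stem of \(\Phi_6\) sits below \(\langle 125,3\rangle\) precisely via such depth-two edges, cf.\ Figure~\ref{fig:5MirrMinDisc}). Hence, a priori, the parent chain \(G/\gamma_i(G)\) of a group with \(\mathrm{cc}(G)\ge 2\) could run \(5^2,5^3,5^4,\dots\) through maximal-class quotients and either jump past order \(5^5\) altogether, or reach order \(5^5\) at a \emph{coclass-one} group rather than at a stem group of \(\Phi_6\). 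What actually has to be proved is: if \(G/G^\prime\simeq(5,5)\) and \(\mathrm{cc}(G)\ge 2\), then \(\lvert\gamma_3(G)/\gamma_4(G)\rvert=25\), so that \(G/\gamma_4(G)\) is one of the twelve stem groups and \(G\) is its descendant; equivalently, no maximal-class \(5\)-group of order \(\ge 5^4\) has an immediate descendant of step size \(\ge 2\). This nuclear-rank assertion about the coclass-one tree is the true point where coclass theory (or further computation) enters; your sketch does not establish it, and it is not reducible to the citation as phrased. By contrast, the obstacle you do name --- a ``spurious coclass-\(3\) (class-\(2\)) group of order \(5^5\)'' --- cannot occur at all: for a two-generated \(p\)-group with elementary abelianization, \(\gamma_2/\gamma_3\) is cyclic of exponent \(p\) (from \([x^p,y]\equiv [x,y]^p \bmod \gamma_3\)), so a class-\(2\) group with abelianization \((5,5)\) has order \(5^3\), never \(5^5\). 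So your proposal is the paper's proof correctly fleshed out in its combinatorial part, but it leaves open, and mislocates, the one structural lemma on which the reduction to the twelve stem groups rests.
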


\begin{proof}
Using permutation representations,
we compiled a program script in Magma
\cite{MAGMA}
for testing whether an assigned \(5\)-group \(G\) with \(G/G^\prime\simeq (5,5)\) is a \(\sigma\)-group of degree \(4\).
\end{proof}

%
%
%
%

%
%
%
%
%
%
%
%
\vskip 5mm

\section{Frobenius and non-Galois unramified \(5\)-extensions}
\label{s:FrobeniusExtensions}


\subsection{On the cyclic quartic fields \(M\)}
\label{ss:CyclicQuartic}
\noindent
Let \(\zeta_{5}\) be a primitive \(5\)th root of unity,
then the irreducible polynomial of \(\zeta_{5}\) is given by
\(\mathrm{Irr}_{\mathbb{Q}}(\zeta_5)=X^4+X^3+X^2+X+1\),
and \(\mathrm{Gal}(\mathbb{Q}(\zeta_5)/\mathbb{Q})=\langle\mu\rangle\)
is a cyclic group of order \(4\)
which admits one subgroup \(\langle\mu^2\rangle\) of order \(2\).
By Galois correspondence, this subgroup corresponds to
\(\mathbb{Q}(\zeta_5)^+=\mathbb{Q}(\zeta_5+\zeta_5^{-1})=\mathbb{Q}(\sqrt{5})\).
(See Figure~\ref{fig:Scheme1}.)

%
%
%
%
%
%
%
%

\begin{figure}[ht]
\caption{Galois correspondence between \(\mathrm{Gal}(k_0/\mathbb{Q})\) and \(k_0\)}
\label{fig:Scheme1}
\setlength{\unitlength}{0.8cm}
\begin{picture}(8,5.5)(-2,1)

    \put(-4,5.5){\makebox(0,0)[cb]{\(1\)}}
    \put(-5,4.5){\makebox(0,0)[rc]{\(2\)}}
    \put(-5,2.5){\makebox(0,0)[rc]{\(2\)}}
    \put(-4,3.7){\line(0,1){1.5}}
    \put(-4.3,3.2){\makebox(0,0)[cb]{\(H=\langle\mu^{2}\rangle\)}}
    \put(-4,1.5){\line(0,1){1.5}}
    \put(-4,1){\makebox(0,0)[cb]{\(\langle\mu\rangle\)}}

    \put(4,5.5){\makebox(0,0)[cb]{\(k_0=\mathbb{Q}(\zeta_{5})\)}}
    \put(4,3.7){\line(0,1){1.5}}
    \put(7,3.2){\makebox(0,0)[cb]{\(k_0^+=\mathrm{inv}(H)=\mathbb{Q}(\zeta_{5}+\zeta_{5}^{-1})=\mathbb{Q}(\sqrt{5})\)}}
    \put(4,1.5){\line(0,1){1.5}}
    \put(4,1){\makebox(0,0)[cb]{\(\mathbb{Q}\)}}

    \put(0,3.2){\makebox(0,0)[cb]{Galois}}
    \put(0,2.8){\makebox(0,0)[cb]{\(\vector(1,0){2}\)}}
    \put(0,2.8){\makebox(0,0)[cb]{\(\vector(-1,0){2}\)}}

\end{picture}
\end{figure}
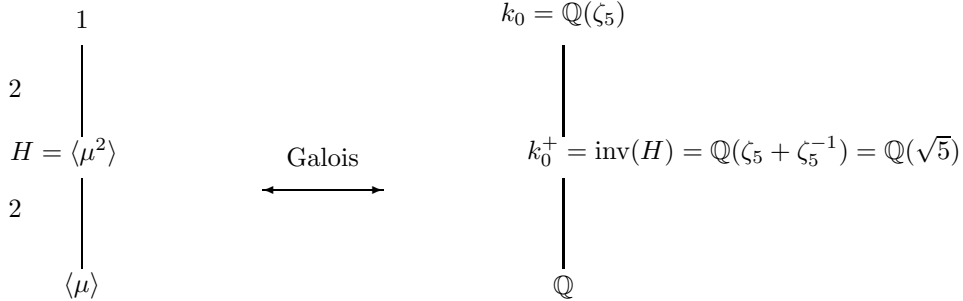

%
%
%
%
%
%
%
%

Let \(d\) be a square-free  integer  prime to \(5\). Then
\({L}=\mathbb{Q}(\sqrt{d},\zeta_{5})\) is a normal
extension over \(\mathbb{Q}\) of degree \(8\),  and the Galois group is
\[
\mathrm{Gal}\left({L}/\mathbb{Q}\right)
=
\langle\tau,\mu\rangle
=
\lbrace 1,\tau,\mu,\mu^{2},\mu^{3},\tau\mu,\tau\mu^{2},\tau\mu^{3}\rbrace,\text{ where } \tau(\sqrt{d})=-\sqrt{d}.
\]
This is an abelian group of type \((2,4)\)
which  has six proper subgroups ordered as follows :
\[
H_{1}=\langle\tau\rangle,
\ H_{2}=\langle\mu\rangle,
\ H_{3}=\langle\mu^{2}\rangle,
\ H_{4}=\langle\tau\mu\rangle,
\ H_{5}=\langle\tau\mu^{2}\rangle
\text{ and }
H_{6}=\langle\tau,\mu^{2}\rangle.
\]
Note that the subgroups \(H_{1},\ H_{3},\ H_{5}\) are cyclic of order \(2\), the subgroups \(H_{2},\ H_{4}\) are cyclic of order
\(4\), and the group \(H_{6}\) is bicyclic of order \(4\). (See Figure~\ref{fig:Scheme2}.)

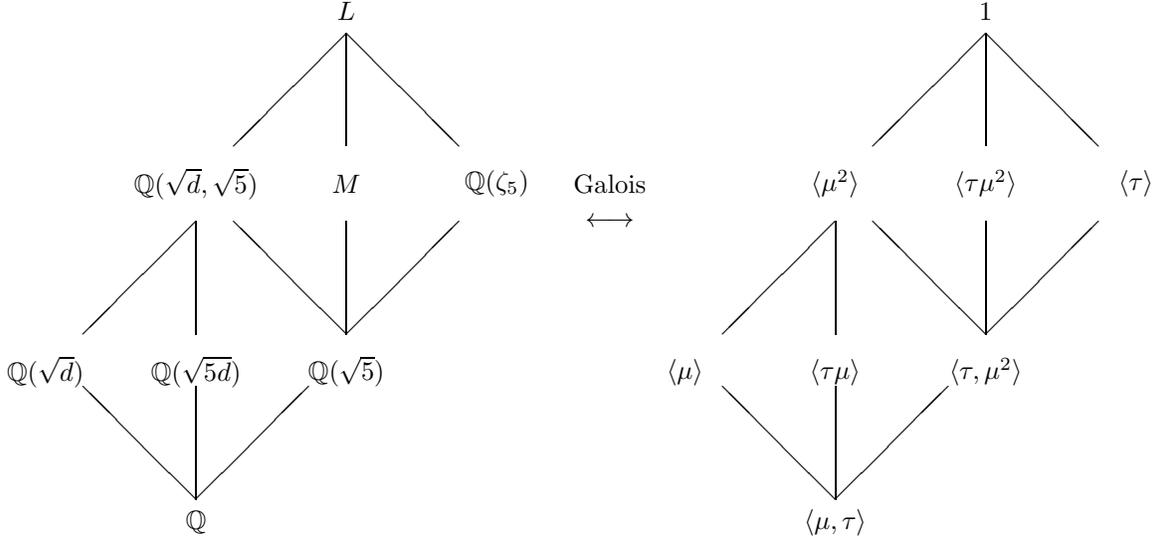
\begin{figure}[hb]
\caption{Galois correspondence between \(L\) and \(\mathrm{Gal}(L/\mathbb{Q})\)}
\label{fig:Scheme2}

\setlength{\unitlength}{1cm}
\begin{picture}(12,7.5)(-3,-0.5)

 \put(0,6.3){\makebox(0,0)[cc]{\(L\)}}

 \put(0,6){\line(-1,-1){1.5}}
 \put(0,6){\line(1,-1){1.5}}
 \put(0,6){\line(0,-1){1.5}}

 \put(2,4){\makebox(0,0)[cc]{\(\mathbb{Q}(\zeta_{5})\)}}
 \put(0,4){\makebox(0,0)[cc]{\(M\)}}
 \put(-2,4){\makebox(0,0)[cc]{\(\mathbb{Q}(\sqrt{d},\sqrt{5})\)}}

 \put(0,2){\line(1,1){1.5}}
 \put(0,2){\line(-1,1){1.5}}
 \put(0,2){\line(0,1){1.5}}

 \put(-2,3.5){\line(-1,-1){1.5}}
 \put(-2,3.5){\line(0,-1){1.5}}

 \put(0,1.5){\makebox(0,0)[cc]{\(\mathbb{Q}(\sqrt{5})\)}}
 \put(-2,1.5){\makebox(0,0)[cc]{\(\mathbb{Q}(\sqrt{5d})\)}}
 \put(-4,1.5){\makebox(0,0)[cc]{\(\mathbb{Q}(\sqrt{d})\)}}

 \put(-2,-0.2){\line(1,1){1.5}}
 \put(-2,-0.2){\line(-1,1){1.5}}
 \put(-2,-0.2){\line(0,1){1.5}}

 \put(-2,-0.5){\makebox(0,0)[cc]{\(\mathbb{Q}\)}}

 \put(3.5,4){\makebox(0,0)[cc]{Galois}}
 \put(3.5,3.5){\makebox(0,0)[cc]{\(\longleftrightarrow\)}}

 \put(8.5,6.3){\makebox(0,0)[cc]{\(1\)}}

 \put(8.5,6){\line(-1,-1){1.5}}
 \put(8.5,6){\line(1,-1){1.5}}
 \put(8.5,6){\line(0,-1){1.5}}

 \put(10.5,4){\makebox(0,0)[cc]{\(\langle\tau\rangle\)}}
 \put(8.5,4){\makebox(0,0)[cc]{\(\langle\tau\mu^{2}\rangle\)}}
 \put(6.5,4){\makebox(0,0)[cc]{\(\langle\mu^{2}\rangle\)}}

 \put(8.5,2){\line(1,1){1.5}}
 \put(8.5,2){\line(-1,1){1.5}}
 \put(8.5,2){\line(0,1){1.5}}

 \put(6.5,3.5){\line(-1,-1){1.5}}
 \put(6.5,3.5){\line(0,-1){1.5}}

 \put(8.5,1.5){\makebox(0,0)[cc]{\(\langle\tau,\mu^{2}\rangle\)}}
 \put(6.5,1.5){\makebox(0,0)[cc]{\(\langle\tau\mu\rangle\)}}
 \put(4.5,1.5){\makebox(0,0)[cc]{\(\langle\mu\rangle\)}}

 \put(6.5,-0.2){\line(1,1){1.5}}
 \put(6.5,-0.2){\line(-1,1){1.5}}
 \put(6.5,-0.2){\line(0,1){1.5}}

 \put(6.5,-0.5){\makebox(0,0)[cc]{\(\langle\mu,\tau\rangle\)}}

\end{picture}
\end{figure}

%
%
%
%
%
%
%
%

We consider the field \(M\) fixed by the subgroup \(\langle\tau\mu^2\rangle\).
Then \(M\) is a cyclic quartic field
and can be generated by adjunction \(M=\mathbb{Q}(\alpha)\)   of
$$\alpha=(\zeta_5-\zeta_5^{-1})\sqrt{d}=\sqrt{-\frac{5d}{2}-\frac{d}{2}\sqrt{5}}$$
 to \(\mathbb{Q}.\)
With respect to the quartic cyclotomic mirror field \(k_0=\mathbb{Q}(\zeta_5)\),
\(M\) satisfies the \textit{quintic reflection theorem} (Equation
\eqref{eqn:5Reflection}).

%
%
%
%

\begin{lemma}
\label{lem:cycfiel} {\rm (i)}
Let \({K}\) be a number field and \({F}/{K}\) be a  cyclic quartic
extension.   Then there exist \(n,\, e\) and \(f\neq 0\) in \({K}\) such that
\begin{enumerate}
\item
\(n\) is not a square in \({K}\),
\item
\(n(e^{2}-f^{2}n)\) is a square in \({K}\),
\item
\({F}={K}\left(\alpha \right)\), where \(\alpha=\sqrt{e+f\sqrt{n}}\),
\end{enumerate}
and the minimal polynomial of \(\alpha\) over \({K}\) is given by
\(
\mathrm{Irr}_{K}\left(\alpha\right)=X^{4}-2eX^{2}+(e^{2}-f^{2}n).
\)

{\rm (ii)} Conversely, if there exist numbers \(n,\ e \text{ and } f\neq0\) in \(K\)
which satisfy the conditions \((1),\,(2)\) and \((3)\), then
\({F}/{K}\) is a  cyclic quartic extension and the
polynomial \(P(X)=X^{4}-2eX^{2}+(e^{2}-f^{2}n)\) is irreducible over
\({K}\). In fact, \({F}\) is the splitting field of \(P(X)\).
\end{lemma}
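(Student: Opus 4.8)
The plan is to treat the two directions separately, pivoting in both on an auxiliary quantity whose square already lies in $K$ and which therefore detects the unique quadratic subfield of the (prospective) cyclic quartic. For part (i), since $F/K$ is cyclic of degree $4$ with $\mathrm{Gal}(F/K)=\langle\sigma\rangle$, it has a unique quadratic subfield $E$, namely the fixed field of $\langle\sigma^{2}\rangle$, and I would write $E=K(\sqrt{n})$ with $n\in K$ a non-square, giving condition (1). As $F/E$ is quadratic I can take $F=E(\alpha)$ with $\alpha^{2}=\beta\in E$, say $\beta=e+f\sqrt{n}$ with $e,f\in K$; a short argument shows $f\neq0$, since otherwise $\beta\in K$ and $F/K$ would acquire a second quadratic subfield $K(\sqrt{e})$ (or degenerate), contradicting cyclicity. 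Then $\sqrt{n}=(\alpha^{2}-e)/f\in K(\alpha)$ forces $F=K(\alpha)$ and $\alpha^{4}-2e\alpha^{2}+(e^{2}-f^{2}n)=0$, so $P(X)$ is the minimal polynomial, yielding (3) and the stated $\mathrm{Irr}_{K}(\alpha)$. The crux is (2): I set $\gamma=\alpha\,\sigma(\alpha)$. Using $\sigma^{2}(\alpha)=-\alpha$ (the non-trivial element of $\mathrm{Gal}(F/E)$) one finds $\sigma(\gamma)=\sigma(\alpha)\sigma^{2}(\alpha)=-\gamma$, while $\gamma^{2}=\alpha^{2}\sigma(\alpha)^{2}=(e+f\sqrt{n})(e-f\sqrt{n})=e^{2}-f^{2}n\in K$. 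Hence $\gamma\notin K$ but $\gamma^{2}\in K$, so $K(\gamma)$ is a quadratic subfield of $F$ and thus equals $E$; writing $\gamma=u+v\sqrt{n}$ and imposing $\gamma^{2}\in K$ gives $u=0$, so $\gamma=v\sqrt{n}$ and $e^{2}-f^{2}n=v^{2}n$, whence $n(e^{2}-f^{2}n)=(vn)^{2}$ is a square, which is (2).

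For part (ii) I would first secure irreducibility. From (1), $E=K(\sqrt{n})$ is quadratic and $e^{2}-f^{2}n\neq0$ (otherwise $n=(e/f)^{2}$ would be a square). Since $\alpha^{2}=e+f\sqrt{n}$ with $f\neq0$, we have $E=K(\alpha^{2})\subseteq K(\alpha)$, so it suffices to show that $e+f\sqrt{n}$ is \emph{not} a square in $E$. If it were, say $e+f\sqrt{n}=(p+q\sqrt{n})^{2}$, then $e=p^{2}+q^{2}n$ and $f=2pq$, so $e^{2}-f^{2}n=(p^{2}-q^{2}n)^{2}$ would be a non-zero square in $K$; but then (2) would make $n(e^{2}-f^{2}n)=n\,(p^{2}-q^{2}n)^{2}$ a square only if $n$ itself were a square, contradicting (1). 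Thus $[K(\alpha):E]=2$, so $[F:K]=4$, and $P(X)$, being monic of degree $4$ with root $\alpha$, is the minimal polynomial and hence irreducible over $K$.

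It remains to show $F/K$ is Galois and cyclic. Condition (2) provides $w\in K^{\times}$ with $n(e^{2}-f^{2}n)=w^{2}$, so $\delta:=w\sqrt{n}/n\in E$ satisfies $\delta^{2}=w^{2}/n=e^{2}-f^{2}n$. Putting $\alpha'=\delta/\alpha\in F$ one checks $(\alpha')^{2}=(e^{2}-f^{2}n)/(e+f\sqrt{n})=e-f\sqrt{n}$, so all four roots $\pm\alpha,\pm\alpha'$ of $P$ lie in $F$; hence $F$ is the splitting field of $P$ and $F/K$ is Galois of degree $4$. To identify the group as cyclic rather than the Klein four-group, I define $\sigma\in\mathrm{Gal}(F/K)$ by $\sigma(\alpha)=\alpha'$ and compute $\sigma(\sqrt{n})=(\sigma(\alpha)^{2}-e)/f=((e-f\sqrt{n})-e)/f=-\sqrt{n}$, hence $\sigma(\delta)=-\delta$, and therefore $\sigma(\alpha')=\sigma(\delta)/\sigma(\alpha)=(-\delta)/\alpha'=-\alpha$ (using $\delta=\alpha\alpha'$). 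Consequently $\sigma^{2}(\alpha)=-\alpha\neq\alpha$, so $\sigma$ has order $4$ and $\mathrm{Gal}(F/K)=\langle\sigma\rangle\cong C_{4}$, proving that $F/K$ is cyclic quartic.

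The main obstacle I anticipate is the careful bookkeeping of square-root signs and the repeated exploitation of the fact that a cyclic quartic has a \emph{unique} quadratic subfield, which is what both directions hinge upon. Conceptually, condition (2) is exactly the classical criterion $b(a^{2}-4b)\in(K^{\times})^{2}$ that singles out the cyclic group among the possible Galois groups of a biquadratic-type quartic $X^{4}+aX^{2}+b$: here $a=-2e$ and $b=e^{2}-f^{2}n$, so $a^{2}-4b=4f^{2}n$ and $b(a^{2}-4b)=4f^{2}\,n(e^{2}-f^{2}n)$, which is a square precisely when $n(e^{2}-f^{2}n)$ is.
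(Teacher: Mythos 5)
Your proof is correct, and while its skeleton matches the paper's (the unique quadratic subfield $E=K(\sqrt{n})$ coming from the unique index-$2$ subgroup, the same irreducibility computation via $\sqrt{e+f\sqrt{n}}=u+v\sqrt{n}$, and the same device of producing the second pair of roots $\pm\alpha'$ from condition (2)), the two pivotal steps are handled by a genuinely different mechanism. For condition (2) in part (i), the paper computes the discriminant $D=2^{8}f^{4}n^{2}(e^{2}-f^{2}n)$ of $P$, invokes the fact that $C_{4}$ cannot be embedded into $\mathcal{A}_{4}$ to conclude that $D$ is a non-square, and then identifies $K\left(\sqrt{e^{2}-f^{2}n}\right)$ with the unique quadratic subfield $K(\sqrt{n})$; you instead introduce the resolvent element $\gamma=\alpha\,\sigma(\alpha)$, verify $\gamma^{2}=e^{2}-f^{2}n\in K$ and $\sigma(\gamma)=-\gamma$, and let uniqueness of the quadratic subfield finish the argument --- no discriminant theory needed. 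Likewise, for cyclicity in part (ii), the paper argues by contradiction (a $V_{4}$-extension would force the discriminant, hence $n$, to be a square in $K$), whereas you explicitly construct $\sigma\in\mathrm{Gal}(F/K)$ with $\sigma(\alpha)=\alpha'$, compute $\sigma(\sqrt{n})=-\sqrt{n}$ and $\sigma(\alpha')=-\alpha$, and so exhibit an element of order $4$ directly. Your route is more elementary and constructive, using nothing beyond transitivity of the Galois group on the roots of an irreducible polynomial, while the paper's buys brevity from the standard dictionary between quartic discriminants and containment of the Galois group in the alternating group. A small additional merit of your write-up is the explicit justification that $f\neq 0$ (a second quadratic subfield would contradict cyclicity), a point the paper's proof of (i) passes over silently; your closing observation identifying (2) with the classical criterion $b(a^{2}-4b)\in (K^{\times})^{2}$ for $X^{4}+aX^{2}+b$ is also accurate.
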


\begin{proof} (i)
It is known that the group \(\mathbb{Z}/4\mathbb{Z}\)
has a single subgroup of order \(2\).
By Galois theory,
there exists a corresponding intermediary field \(R\)
of the cyclic quartic extension \(F/K\).
Thus we can find \(n\in K\), which is not a  square in \(K\),
such that \(R=K(\sqrt{n})\).
Since \(F\) is a quadratic extension of \(R\),
there exists \(\alpha\in F\) such that
\(\alpha^{2}=e+f\sqrt{n}\in R\) with \(e,f\in K\), \(f\ne 0\),
and \(F=R(\alpha)\).
Thus we have
\(K(\alpha)=F\), because \(\alpha\not\in R\).
Furthermore, it is obvious that
the minimal polynomial of \(\alpha\) is \(P(X)=X^{4}-2eX^{2}+(e^{2}-f^{2}n)\)
and the splitting field of \(P(X)\) over \(\mathbb{Q}\) is \(F\).

The discriminant of \(P\) is given by
$$D=16(e^{2}-f^{2}n)(2f\sqrt{n})^{4}=2^8f^{4}n^{2}(e^{2}-f^{2}n).$$
Therefore the Galois group \(\mathrm{Gal}\left(F/K\right)\)
can be seen as a subgroup of the permutation group of the roots of \(P(X)\),
which is isomorphic to \(S_{4}\),
and cannot be injected into \(\mathcal{A}_{4}\),
since the group \(\mathcal{A}_{4}\) does not have a cyclic subgroup of order \(4\).
We conclude that the discriminant is not a square in \(K\),
whence \(K\left(\sqrt{e^{2}-f^{2}n}\right)/K\) is of degree \(2\)
and is contained in \(F\).
It follows that
$$R=K\left(\sqrt{n}\right)=K\left(\sqrt{e^{2}-f^{2}n}\right),$$
so \(\frac{e^{2}-f^{2}n}{n}\) is a square in \(K\).
Consequently, we see that
\(n\left(e^{2}-f^{2}n\right)=n^{2}\frac{e^{2}-f^{2}n}{n}\)
is a square in \(K\).

(ii) Conversely, let 
$$P(X)=X^{4}-2eX^{2}+(e^{2}-f^{2}n)$$
with \(n,e,f\in K\), \(f\ne 0\),
such that the conditions \((1),\,(2)\) and \((3)\) are satisfied.
Since \(\alpha\) is a root of \(P(X)\),
the degree \(\lbrack F:K\rbrack\) must be a divisor of \(4\).
Since \(K\left(\sqrt{n}\right)\subseteq F\), we have
either \(F=K\left(\sqrt{n}\right)\) or \(\lbrack F:K\rbrack=4\).
If we have \(F=K\left(\sqrt{n}\right)\),
there exist \(u,v \in K\) such that
\(\sqrt{e+f\sqrt{n}}=u+v\sqrt{n}\).
Thus 
$$e^{2}-f^{2}n=\left(u^{2}-v^{2}n\right)^{2},$$
and by \((2)\) we conclude that \(n\) is a square in \(K\), which is a contradiction.
So \(\lbrack F:K\rbrack=4\), and this enforces that
\(P(X)\) is the minimal polynomial of \(\alpha\).
From the fact
$$e-f\sqrt{n}=\left(\frac{1}{\sqrt{n}}\right)^{2}\frac{n(e^{2}-f^{2}n)}{\alpha^{2}},$$
we conclude that \(F\) is the splitting field of \(P(X)\) over \(K\).
Moreover, \(F\) is normal and \(\#\mathrm{Gal}\left(F/K\right)=4\).
Now we prove that \(\mathrm{Gal}\left(F/K\right)\) is cyclic of order \(4\).
If the Galois group \(\mathrm{Gal}\left(F/K\right)\) were isomorphic to \(V_{4}\),
then the discriminant would be a square in \(K\).
This would imply that \(n\) were a square in \(K\), which is a contradiction.
\end{proof}

%
%
%
%
%
%
%
%
%
%
%
%

%
%
%
%
%
%
%
%

\begin{corollary}
\label{cor:MinPol}
 Let \(d\) be  a square-free integer prime to \(5\) and \(\zeta_{5}\) be a primitive
\(5th\) root of unity. Then the mirror image \({M }\) of
\({k}_{1}=\mathbb{Q}(\sqrt{d})\) can always be generated by adjoining the algebraic number
$$\alpha = (\zeta_{5} - \zeta_{5}^{-1})\sqrt{d}= \sqrt{\frac{-5d}{2}+\frac{-d}{2}\sqrt{5}} $$
 to the rational,
whence \({M}\) is complex for \(d > 0\) and \({M}\) is real for \(d <0\).
For the construction of \({M}\) one can therefore use the minimal
polynomial of \(\alpha\) over \(\mathbb{Q}\), which is given by
\begin{equation}
\label{eqn:MinPol}
\mathrm{Irr}_\mathbb{Q}(\alpha)=X^4+5dX^2+5d^2.
\end{equation}
\end{corollary}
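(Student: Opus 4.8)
The plan is to reduce everything to the explicit value of $(\zeta_5-\zeta_5^{-1})^2$ and then to feed the result into Lemma \ref{lem:cycfiel}. First I would expand $(\zeta_5-\zeta_5^{-1})^2=\zeta_5^2-2+\zeta_5^{-2}$ and evaluate the Gaussian period $\zeta_5^2+\zeta_5^{-2}$. Writing $\eta=\zeta_5+\zeta_5^{-1}=2\cos(2\pi/5)=\tfrac{-1+\sqrt5}{2}$, which is the positive root of $\eta^2+\eta-1=0$ arising from $\mathrm{Irr}_{\mathbb{Q}}(\zeta_5)=X^4+X^3+X^2+X+1$, I obtain $\zeta_5^2+\zeta_5^{-2}=\eta^2-2=\tfrac{-1-\sqrt5}{2}$ and hence $(\zeta_5-\zeta_5^{-1})^2=\tfrac{-5-\sqrt5}{2}$. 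Multiplying by $d$ gives $\alpha^2=\tfrac{-5d}{2}+\tfrac{-d}{2}\sqrt5$, which is exactly the surd expression in the statement, so the two descriptions of $\alpha$ agree.

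Next I would cast this in the format of Lemma \ref{lem:cycfiel} by setting $K=\mathbb{Q}$, $n=5$, $e=-\tfrac{5d}{2}$ and $f=-\tfrac{d}{2}$, the latter being nonzero because $d\neq 0$. Condition $(1)$ holds since $5$ is not a square in $\mathbb{Q}$; condition $(2)$ follows from
$$n(e^2-f^2n)=5\left(\tfrac{25d^2}{4}-\tfrac{5d^2}{4}\right)=5\cdot 5d^2=(5d)^2;$$
and condition $(3)$ is the construction $M=\mathbb{Q}(\alpha)$ with $\alpha=\sqrt{e+f\sqrt n}$. Lemma \ref{lem:cycfiel}(ii) then guarantees that $M/\mathbb{Q}$ is a cyclic quartic extension and that $\mathrm{Irr}_{\mathbb{Q}}(\alpha)=X^4-2eX^2+(e^2-f^2n)=X^4+5dX^2+5d^2$, which is \eqref{eqn:MinPol}. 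Equivalently, one may derive the polynomial directly by rewriting $\alpha^2=\tfrac{-5d}{2}+\tfrac{-d}{2}\sqrt5$ as $2\alpha^2+5d=-d\sqrt5$ and squaring to eliminate $\sqrt5$, which yields $4\alpha^4+20d\alpha^2+20d^2=0$, that is, $\alpha^4+5d\alpha^2+5d^2=0$; the irreducibility is then automatic once $[M:\mathbb{Q}]=4$ is known.

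Finally I would read off the signature. The four roots of \eqref{eqn:MinPol} are $\pm\sqrt{\tfrac{-5d\pm d\sqrt5}{2}}$, and since $M/\mathbb{Q}$ is Galois it is either totally real or totally complex. For $d>0$ both radicands $\tfrac{-5d\pm d\sqrt5}{2}$ are negative because $0<\sqrt5<5$, so every root is purely imaginary and $M$ is complex; for $d<0$ both radicands are positive, so every root is real and $M$ is real.

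I expect no serious obstacle here, since the argument is a direct computation feeding into Lemma \ref{lem:cycfiel}. The only point requiring care is the choice of sign of $\sqrt5$ in the period $\zeta_5+\zeta_5^{-1}=\tfrac{-1+\sqrt5}{2}$, fixed by $\cos(2\pi/5)>0$. Reassuringly, this sign is immaterial for both conclusions: the minimal polynomial depends on $f$ only through $f^2$, and the real/complex dichotomy is governed by the inequality $\sqrt5<5$, which holds irrespective of the sign convention.
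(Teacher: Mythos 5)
Your proposal is correct and follows exactly the route the paper intends: the paper states Corollary \ref{cor:MinPol} without an explicit proof as a direct consequence of Lemma \ref{lem:cycfiel}, and your instantiation $K=\mathbb{Q}$, $n=5$, $e=-\tfrac{5d}{2}$, $f=-\tfrac{d}{2}$ (after computing $(\zeta_5-\zeta_5^{-1})^2=\tfrac{-5-\sqrt{5}}{2}$) is precisely that application, yielding $X^4+5dX^2+5d^2$ and the signature dichotomy from the signs of the radicands. All computations check out, including the sign discussion for $\zeta_5+\zeta_5^{-1}$ and the verification that $n(e^2-f^2n)=(5d)^2$ is a square.
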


%
%
%
%

\begin{remark}
\label{rmk:CondAndDisc}
The conductor \(c(M)\) and the discriminant \(d(M)\) of \(M\) are given by
\begin{eqnarray}
\label{eqn:CondAndDisc}
c(M)&=&
\begin{cases}
20d & \text{ if } d\equiv 2,3\pmod{4}, \\
 5d & \text{ if } d\equiv 1\pmod{4},
\end{cases}\\
d(M)&=&c(M)^2d(k_0^+)=
\begin{cases}
2000d^2 & \text{ if } d\equiv 2,3\pmod{4}, \\
 125d^2 & \text{ if } d\equiv 1\pmod{4},
\end{cases}
\end{eqnarray}
where \(d(k_0^+)=5\) is the discriminant of the quadratic subfield \(k_0^+=\mathbb{Q}(\sqrt{5})\) of \(M\).
(See
\cite{HHRWH,SpWi}.)
\end{remark}

%
%
%
%
%
%
%
%

\subsection{Imaginary cyclic quartic fields \(M\) with \(d>0\)}
\label{ss:ImaginaryTheory}

\noindent
In the following,
the two Frobenius groups \(F_{5,w}\) of order \(20\) with primitive root \(w\in\{2,3\}\) modulo \(5\) will be denoted by
\begin{equation}
\label{eqn:5Frobenius}\left\{
\begin{aligned}
F_{5,2} &= \langle\ \sigma,\iota\mid\sigma^5=1,\ \iota^4=1, \iota^{-1}\sigma\iota=\sigma^2\ \rangle, \\
F_{5,3} &= \langle\ \sigma,\iota\mid\sigma^5=1,\ \iota^4=1, \iota^{-1}\sigma\iota=\sigma^3\ \rangle,
\end{aligned} \right.
\end{equation}
where \(\iota\vert_{M}=\mu\vert_{M}\).

%
%
%
%

\begin{proposition}
\label{prp:FrobeniusFieldsImag}
Let \({E}_1,\ldots,{E}_6\) be the six unramified cyclic quintic extensions
of the imaginary cyclic quartic field \({M}=\mathbb{Q}\left((\zeta_5-\zeta_5^{-1})\sqrt{d}\right)\), \(d>0\),
with \(5\)-class group \(\mathrm{Cl}_5(M)\simeq C_5\times C_5\)
or, more generally, of \(5\)-class rank \(2\).
The properties of these fields as absolute extensions \({E}_i/\mathbb{Q}\),
in dependence on the eight cases in Table
\ref{tbl:5RankCasesImag},
are given as follows:

$(1)$
In cases \((\mathrm{a})\) and \((\mathrm{g})\), all six fields \({E}_1,\ldots,{E}_6\) are normal and share isomorphic automorphism groups \(\mathrm{Gal}({E}_i/\mathbb{Q})\simeq {F}_{5,2}\) for \( i=1, \dots , 6\).

$(2)$
In cases \((\mathrm{b})\) and \((\mathrm{h})\), all six fields \({E}_1,\ldots,{E}_6\) are normal and share isomorphic automorphism groups \(\mathrm{Gal}({E}_i/\mathbb{Q})\simeq {F}_{5,3}\) for \(i=1, \dots ,  6\).

$(3)$
In all the other cases \((\mathrm{c})\), \((\mathrm{d})\), \((\mathrm{e})\), \((\mathrm{f})\),
two extensions are normal with non-isomorphic automorphism groups, say
$$\mathrm{Gal}({E}_1/\mathbb{Q})\simeq {F}_{5,2} \;\mbox{ and }\;
\mathrm{Gal}({E}_2/\mathbb{Q})\simeq {F}_{5,3},$$
but the other four extensions are non-Galois and form two conjugate pairs
\({E}_3\simeq {E}_4\) and \({E}_5\simeq {E}_6\).
\end{proposition}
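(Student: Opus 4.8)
The plan is to translate the statement into the semisimple module theory of $V=\mathrm{Cl}_5(M)$ over the group ring $\mathbb{F}_5\lbrack\mathrm{Gal}(M/\mathbb{Q})\rbrack$, and then to read everything off the eigenvalues of the generating automorphism $\mu$ of $M/\mathbb{Q}$. Writing $\langle\mu\rangle=\mathrm{Gal}(M/\mathbb{Q})\simeq C_4$, class field theory identifies the six unramified cyclic quintic extensions $E_i/M$ with the six index-$5$ subgroups (lines) $H_i\subset V$, through $\mathrm{Gal}(E_i/M)\simeq V/H_i$. By Galois descent, $E_i/\mathbb{Q}$ is normal exactly when $H_i$ is $\mu$-stable; and in that case the conjugation action of any lift $\iota$ of $\mu$ on a generator $\sigma$ of $\mathrm{Gal}(E_i/M)=V/H_i$ is multiplication by the eigenvalue $w$ of $\mu$ on the quotient $V/H_i$, so that $\mathrm{Gal}(E_i/\mathbb{Q})\simeq F_{5,w}$ in the sense of \eqref{eqn:5Frobenius} with the normalisation $\iota\vert_M=\mu\vert_M$. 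Thus the whole proposition reduces to determining the $\mu$-module structure of $V$ and counting its $\mu$-stable lines.

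First I would pin down that structure. Since $\gcd(4,5)=1$, the module $V$ is semisimple and $\mu\vert V$ is diagonalisable over $\mathbb{F}_5$ (as $X^4-1$ splits there). The decisive extra input in the imaginary case $d>0$ is that $M$ is a CM field with totally real maximal subfield $k_0^+=\mathbb{Q}(\sqrt5)$, and that $\mu^2$ generates $\mathrm{Gal}(M/k_0^+)$ (complex conjugation). Since $\mathrm{Cl}_5(k_0^+)=1$ (the class number of $\mathbb{Q}(\sqrt5)$ is $1$), Theorem \ref{thm:CycQuart}(ii) applies with $k=k_0^+$ and shows that $\mu^2$ acts by inversion on $V$. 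Hence every eigenvalue $\lambda$ of $\mu$ satisfies $\lambda^2=-1$ in $\mathbb{F}_5$, i.e.\ $\lambda\in\{2,3\}$, the two primitive fourth roots of unity modulo $5$ (this is compatible with the trace annihilation of Theorem \ref{thm:CycQuart}(i), whose polynomial $1+X+X^2+X^3=(X+1)(X-2)(X-3)$ merely forbids in addition the already-excluded value $\lambda=-1$). Consequently $V$ is one of exactly three $\mu$-module types: the scalar modules $\lambda=2$ and $\lambda=3$, or the split module $\mathbb{F}_5(2)\oplus\mathbb{F}_5(3)$.

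Second I would settle the three conclusions by counting $\mu$-stable lines in each type. For a scalar module ($\mu=\lambda\cdot\mathrm{id}$) every line is $\mu$-stable, so all six $E_i$ are normal and each quotient $V/H_i$ carries eigenvalue $\lambda$; thus $\mathrm{Gal}(E_i/\mathbb{Q})\simeq F_{5,\lambda}$ uniformly, giving $(1)$ for $\lambda=2$ (cases $(\mathrm a),(\mathrm g)$) and $(2)$ for $\lambda=3$ (cases $(\mathrm b),(\mathrm h)$). For the split module the stable lines are exactly the two eigenlines $L_2,L_3$: on $\mathbb{P}^1(\mathbb{F}_5)$ the induced map sends a slope $t$ to $(2/3)t=-t$, fixing only $t=0,\infty$. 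Hence exactly two $E_i$ are normal, and since $V/L_2\simeq L_3$ and $V/L_3\simeq L_2$ as $\mu$-modules, these carry eigenvalues $3$ and $2$, yielding one copy of $F_{5,3}$ and one of $F_{5,2}$. The remaining four lines are moved by $\mu$; because $\mu^2=-\mathrm{id}$ stabilises every line, $\mu$ acts on the six lines as an involution, so those four fall into two transpositions (the slope pairs $\{1,4\}$ and $\{2,3\}$). The corresponding fields are therefore non-Galois and occur in two $\mu$-conjugate pairs $E_3\simeq E_4$, $E_5\simeq E_6$, which is $(3)$.

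The main obstacle — beyond checking that $\mathrm{Gal}(E_i/\mathbb{Q})$ really is the full Frobenius group $F_{5,w}$, which follows since $\mu$ acts with order $4$ (eigenvalue $\ne1$) on $\mathrm{Gal}(E_i/M)$, so that $1\to C_5\to\mathrm{Gal}(E_i/\mathbb{Q})\to C_4\to1$ is a split non-abelian extension — is bookkeeping of two kinds. First, one must match the three intrinsic module types to the eight arithmetic cases of Table \ref{tbl:5RankCasesImag}, i.e.\ read off the eigenvalue configuration of $\mu$ from the invariants (Selmer ranks and reflection data) defining those cases. Second, one must keep the orientation of the Frobenius label straight: $w$ is the eigenvalue on the \emph{quotient} $V/H_i$, and the distinction between $F_{5,2}$ and $F_{5,3}$ is meaningful only because $\iota\vert_M=\mu\vert_M$ is fixed — as abstract groups they are isomorphic via $\iota\mapsto\iota^{-1}$, an isomorphism that reverses this restriction, so in $(3)$ the word ``non-isomorphic'' must be read in the category of extensions of $\mathrm{Gal}(M/\mathbb{Q})$.
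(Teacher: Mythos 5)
Your module-theoretic reduction is internally correct, and it is a genuinely different route from the paper's. The paper argues constructively: it realizes \(E_1,E_2\) as splitting fields \(\mathrm{Spl}_{\mathbb{Q}}f(X,\gamma)\) of Kishi's quintic polynomials attached to generators \(\gamma\) of the \(5\)-Selmer groups \(V_5(k_1)\), \(V_5(k_2)\), quotes \cite{Ki} for \(\mathrm{Gal}(K_\gamma/\mathbb{Q})\simeq F_{5,2}\) when \(\gamma\in k_1\) and \(\simeq F_{5,3}\) when \(\gamma\in k_2\), and then propagates the type to the remaining subextensions of the compositum (or derives a contradiction forcing non-normality) by Kishi's Lemma 2.5. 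You instead count \(\mu\)-stable lines in \(V=\mathrm{Cl}_5(M)\) after observing, via Theorem \ref{thm:CycQuart}(ii) and \(\mathrm{Cl}_5(\mathbb{Q}(\sqrt{5}))=1\), that \(\mu^2\) acts as \(-1\), so \(V\) is scalar of eigenvalue \(2\), scalar of eigenvalue \(3\), or split. This cleanly proves the trichotomy \textquotedblleft all six normal of one type / all six normal of the other type / exactly two normal of distinct types plus two conjugate non-Galois pairs,\textquotedblright{} including the correct structure of conclusion \((3)\), and it does so without any of Kishi's explicit polynomials.

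There is, however, a genuine gap: the proposition does not assert the trichotomy in the abstract, it asserts \emph{which} of the eight cases of Table \ref{tbl:5RankCasesImag} produces which behavior, and your proof never establishes that correspondence. You defer it as \textquotedblleft bookkeeping,\textquotedblright{} but it is not: the only reflection input you have available, the rank-sum formula \eqref{eqn:5Reflection}, is symmetric under exchanging \((r_1,\delta_1)\leftrightarrow(r_2,\delta_2)\), so it cannot distinguish case \((\mathrm{a})\) from case \((\mathrm{b})\) (both give \(\varrho_5(M)=2\)), hence cannot decide whether a given row of the table yields eigenvalue \(2\) twice, eigenvalue \(3\) twice, or one of each. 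What is needed is the eigenspace-refined Spiegelungssatz: \(\mathrm{Cl}_5(M)\) splits into two \(\mu\)-eigenspaces, reflected individually to \(k_1\) and to \(k_2\), with dimensions \(\sigma_5(k_1)-\delta_1\) and \(\sigma_5(k_2)-\delta_2\) respectively (one checks this reproduces every row of Tables \ref{tbl:5RankCasesImag} and \ref{tbl:5RankCasesReal}); and, on top of that, the normalization that the \(k_1\)-eigenspace corresponds to \(F_{5,2}\) and the \(k_2\)-eigenspace to \(F_{5,3}\) relative to \(\iota\vert_M=\mu\vert_M\). That refined statement and normalization are precisely what the paper imports from Kishi's constructive results; without them your argument cannot even exclude that cases \((\mathrm{a})\), \((\mathrm{g})\) give \(F_{5,3}\) rather than \(F_{5,2}\) --- the orientation issue you raise but leave unresolved. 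Supplying the refined reflection theorem from \cite{Ki} would close the gap and make your proof complete.
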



\renewcommand{\arraystretch}{1.0}

\begin{table}[ht]
\caption{All possible \(5\)-class ranks \(r_1:=\varrho_5(k_1)\), \(r_2:=\varrho_5(k_2)\)
and invariants \(\delta_1\), \(\delta_2\) for the associated quadratic fields \(k_1,k_2\)
which are \(5\)-dual to an imaginary cyclic quartic field 
\(M=\mathbb{Q}((\zeta_5-\zeta_5^{-1})\sqrt{d})\), \(d>0\),
with \(5\)-class rank \(r:=\varrho_5(M)=2\)}
\label{tbl:5RankCasesImag}
\begin{center}
\begin{tabular}{|c||c|c|c|c|c|c|c|}
\hline
 Case & \(r_1\) & \(\delta_1\) & \(r_2\) & \(\delta_2\) \\
\hline
  (a) &  \(1\) &         \(0\) &   \(0\) &        \(1\) \\
  (b) &  \(0\) &         \(1\) &   \(1\) &        \(0\) \\
  (c) &  \(1\) &         \(1\) &   \(1\) &        \(1\) \\
  (d) &  \(0\) &         \(0\) &   \(0\) &        \(0\) \\
  (e) &  \(1\) &         \(1\) &   \(0\) &        \(0\) \\
  (f) &  \(0\) &         \(0\) &   \(1\) &        \(1\) \\
  (g) &  \(2\) &         \(1\) &   \(0\) &        \(1\) \\
  (h) &  \(0\) &         \(1\) &   \(2\) &        \(1\) \\
\hline
\end{tabular}
\end{center}
\end{table}

%
%
%
%

\begin{proof}
According to the quintic reflection theorem
\cite{Ki},
the assumption \(r=2\) implies that one of the eight disjoint cases in Table
\ref{tbl:5RankCasesImag}
is satisfied.
 
In case \((\mathrm{a})\), the \(5\)-Selmer group of \({k}_1\) is given by
\(V_5({k}_1)=\langle\alpha_{11},\varepsilon_1\rangle\). See
\cite[p.\,2, l.\,3]{Ki}.
Let \
$${E}_1:=\mathrm{Spl}_{\mathbb{Q}}f(X,\alpha_{11})\; \mbox{  and  }\; {E}_2:=\mathrm{Spl}_{\mathbb{Q}}f(X,\varepsilon_1).$$
In virtue of \(\delta_1=0\), \({E}_1\) and \({E}_2\) are unramified cyclic quintic extensions of \({M}\).
According to
\cite[p.\,17, l.\,9--23]{Ki},
\(\mathrm{Gal}({E}_i/\mathbb{Q})\simeq {F}_{5,2}\), for \(1\le i\le 2\).
Let \({L}:={E}_1\cdot {E}_2\) be the compositum.
Then, by
\cite[Lem. 2.5]{Ki},
all proper subextensions \({E}\) of \({L}/{M}\) have \(\mathrm{Gal}({E}/\mathbb{Q})\simeq {F}_{5,2}\).

In case \((\mathrm{b})\), the \(5\)-Selmer group of \({k}_2\) is given by
\(V_5({k}_2)=\langle\alpha_{21},\varepsilon_2\rangle\). See
\cite[p.\,2, l.\,3]{Ki}.
Let \({E}_1:=\mathrm{Spl}_{\mathbb{Q}}f(X,\alpha_{21})\) and \({E}_2:=\mathrm{Spl}_{\mathbb{Q}}f(X,\varepsilon_2)\).
In virtue of \(\delta_2=0\), \({E}_1\) and \({E}_2\) are unramified cyclic quintic extensions of \({M}\).
According to
\cite[p.\,17, l.\,9--23]{Ki},
\(\mathrm{Gal}({E}_i/\mathbb{Q})\simeq {F}_{5,3}\), for \(1\le i\le 2\).
Let \({L}:={E}_1\cdot {E}_2\) be the compositum.
Then, by
\cite[Lem. 2.5]{Ki},
all proper subextensions \({E}\) of \({L}/{M}\) have \(\mathrm{Gal}({E}/\mathbb{Q})\simeq {F}_{5,3}\).

Exemplarily, we consider case  \((\mathrm{d})\).
Then the \(5\)-Selmer groups of \({k}_1\) and \({k}_2\) are given by
\mbox{\(V_5({k}_1)=\langle\varepsilon_1\rangle\)}, \(V_5({k}_2)=\langle\varepsilon_2\rangle\).
Let 
$${E}_1:=\mathrm{Spl}_{\mathbb{Q}}f(X,\varepsilon_1) \;\mbox{ and }\; {E}_2:=\mathrm{Spl}_{\mathbb{Q}}f(X,\varepsilon_2).$$
Then, in virtue of \(\delta_1=\delta_2=0\),
\({E}_1\) and \({E}_2\) are unramified cyclic quintic extensions of \({M}\).
According to
\cite[p.\,17, l.\,9--23]{Ki},
$$\mathrm{Gal}({E}_1/ \mathbb{Q})\simeq {F}_{5,2}\; \mbox{ and }\;
\mathrm{Gal}({E}_2/ \mathbb{Q})\simeq {F}_{5,3}.$$
Let \({L}:={E}_1\cdot {E}_2\) be the compositum.
Then \({E}/{M}\) is also an unramified cyclic quintic extension, for any proper subextension \({E}\) of \({L}/{M}\) distinct from \({E}_1\) and \({E}_2\).
Assume that \(\mathrm{Gal}({E}/\mathbb{Q})\simeq F_{{5},2}\).
Since \({L}={E}_1\cdot {E}\), all proper subextensions \({E}^\prime\) of \({L}/{M}\) have \(\mathrm{Gal}({E}^\prime/\mathbb{Q})\simeq {F}_{5,2}\), by
\cite[Lem. 2.5]{Ki}.
This is a contradiction to \(\mathrm{Gal}({E}_2/\mathbb{Q})\simeq {F}_{5,3}\).
In the same manner, the assumption that \(\mathrm{Gal}({E}/\mathbb{Q})\simeq {F}_{5,3}\) leads to a contradiction.
Therefore \({E}/\mathbb{Q}\) must be a non-Galois extension.
\end{proof}

%
%
%
%
%
%
%
%
%
%
%
%

\subsection{Infinite family of imaginary cyclic quartic fields \(M\) whose \(5\)-rank is at least \(2\)}  
\label{ss:InfiniteFamily}



As before, let \(d\neq1\) be a square-free integer prime to \(5\), and let
\({k}_{1}=\mathbb{Q}(\sqrt{d})\) and
\({k}_{2}=\mathbb{Q}(\sqrt{5d})\) be the associated quadratic
fields. For \(\gamma \in k={k}_{i},\,\, i\in\{1,2\}\),
 Y. Kishi \cite[p.\,6]{Ki} has defined the polynomial
\[
f(X,\gamma)
=
X^{5}-5\mathrm{N}_{{k}}(\gamma)X^{3}+5\mathrm{N}_{{k}}(\gamma)^{2}X-\mathrm{N}_{{k}}(\gamma)^2\mathrm{Tr}_{{k}}(\gamma),
\]
 where \(\mathrm{N}_{{k}}, \mathrm{Tr}_{{k}}\) are the norm map and the trace map of
 \({k}/\mathbb{Q}\).
 The minimal splitting field of \(f(X,\gamma)\) is noted  by \({K}_{\gamma}\).
  Furthermore, M. Imaoka and Y. Kishi
   \cite{ImKi},
have characterized all \(F_{5,w}\)-extensions with \(w\in\lbrace 2,3\rbrace\) as
 \({K}_{\gamma}\) for a suitable elements \(\gamma \in {k}_{i}\) with \(i\in\{1,2\}\).
 If \(\gamma\in k_1\), then
\(\mathrm{Gal}\left({K}_{\gamma}/\mathbb{Q}\right)\simeq{F}_{5,2} \), and if \(\gamma\in k_2\), then \(\mathrm{Gal}\left({K}_{\gamma}/\mathbb{Q}\right)\simeq{F}_{5,3}\).
Now, we consider the real quadratic fields
\(k_1=\mathbb{Q}(\sqrt{d})\)
and
\(k_2=\mathbb{Q}(\sqrt{5d})\),
\(d=(\alpha+\beta)^{2}-4\),
given by Kishi in
\cite[Ex. 3.5, p.\,489]{Ki2}
for \(p=5\), where the pair of integers
\((\alpha,\beta)\in\mathbb{N}\times\mathbb{N}\) such that
\(\alpha\geq 2,\ \beta\geq 2\) satisfies the simultaneous conditions
\begin{equation}
\label{eqn:Parameters}
\begin{cases}
\alpha^{2}-5^3\beta^{2}=4, \\
 \alpha+\beta\equiv 0\pmod{5^{2}}.
\end{cases}
\end{equation}

%
%
%
%

\begin{remark}
\label{rmk:InfiniteFamily}
The Pellian equation \(\alpha^{2}-5^3\beta^{2}=4\)
has infinitely many solutions \((\alpha,\beta)\),
which correspond to the powers \(\eta^n=\frac{\alpha+\beta\sqrt{5^3}}{2}\) of the
normpositive fundamental unit
 $$\eta=\frac{123+11\sqrt{5^3}}{2}=\frac{123+11\cdot 5\sqrt{5}}{2}$$
of the suborder with conductor \(f=5\) of \(\mathbb{Q}(\sqrt{5})\).
The solution \((\alpha,\beta)\) satisfies the additional constraint \(\alpha+\beta\equiv 0\bmod 5^{2}\) in
\eqref{eqn:Parameters}
if and only if \(n=7+5^2k\) with an integer \(k\ge 0\).
\end{remark}

%
%
%
%

\begin{proposition}
\label{prp:InfiniteFamily}
Let 
$$M=\mathbb{Q}\left((\zeta_{5}-\zeta_{5}^{-1})\sqrt{(\alpha+\beta)^{2}-4}\right),$$
where \(\alpha,\,\beta\) satisfy the conditions
\eqref{eqn:Parameters}.
Then the \(5\)-rank of the class group of \(M\) is greater than or equal to \(2\).
\end{proposition}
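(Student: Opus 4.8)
The plan is to read the answer off the quintic reflection theorem \eqref{eqn:5Reflection} and to reduce the whole statement to a local congruence. Here $d=(\alpha+\beta)^2-4>0$, so $M$ is imaginary and both mirror fields $k_1=\mathbb{Q}(\sqrt d)$ and $k_2=\mathbb{Q}(\sqrt{5d})$ are real quadratic; their unit groups have rank one, whence $\dim_{\mathbb{F}_5}(U_i/U_i^5)=1$ for the odd prime $5$ and $\sigma_5(k_i)=\varrho_5(k_i)+1$. Substituting into \eqref{eqn:5Reflection} gives
\[
\varrho_5(M)=\varrho_5(k_1)+\varrho_5(k_2)+2-\delta_1-\delta_2 .
\]
Since $0\le\delta_1,\delta_2\le 1$ and $\varrho_5(k_i)\ge 0$, it suffices to prove that $\delta_1=\delta_2=0$; this forces $\varrho_5(M)\ge 2$ with no control whatsoever on the two class ranks. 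In the language of \eqref{eqn:3Primary} the condition $\delta_i=0$ means that the Selmer group $V_5(k_i)$ contains \emph{no} $5$-virtual unit failing to be $5$-primary. Concretely this is case $(\mathrm{d})$ of Table~\ref{tbl:5RankCasesImag}, which is the realization I expect the family \eqref{eqn:Parameters} to occupy.

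The next step is to exhibit the virtual units explicitly. The shape $d=(\alpha+\beta)^2-4$ is chosen precisely so that $\varepsilon_1=\tfrac{(\alpha+\beta)+\sqrt d}{2}\in k_1$ is a unit, with $\mathrm{N}_{k_1}(\varepsilon_1)=\tfrac{(\alpha+\beta)^2-d}{4}=1$ and $\mathrm{Tr}_{k_1}(\varepsilon_1)=\alpha+\beta$. Kishi's polynomial then specializes to $f(X,\varepsilon_1)=X^5-5X^3+5X-(\alpha+\beta)$, and the Chebyshev-type identity $X^5-5X^3+5X=y^5+y^{-5}$ for $X=y+y^{-1}$ shows its roots to be $y+y^{-1}$ with $y^5=\varepsilon_1$, exposing the Kummer character of the extension; by \cite{ImKi} the splitting field is an $F_{5,2}$-extension. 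The companion relation $\alpha^2-5^3\beta^2=4$ plays the analogous role on the mirror side: it encodes the conductor-$5$ unit $\eta$ of $\mathbb{Q}(\sqrt5)$ from Remark~\ref{rmk:InfiniteFamily}, and I would use it to pin down the relevant $5$-virtual unit $\varepsilon_2$ of $k_2$ together with its $F_{5,3}$-polynomial $f(X,\varepsilon_2)$.

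Then I would verify the $5$-primality of $\varepsilon_1$ and $\varepsilon_2$, which is where the congruence $\alpha+\beta\equiv 0\pmod{5^2}$ is indispensable. From $\mathrm{Tr}_{k_1}(\varepsilon_1)=\alpha+\beta\equiv 0\pmod{25}$ and $\mathrm{N}_{k_1}(\varepsilon_1)=1$ one gets $\varepsilon_1+\varepsilon_1^{-1}\equiv 0\pmod{25}$; combining this with the splitting behaviour of $5$ in $k_1$ (note $d=(\alpha+\beta)^2-4\equiv 1\pmod5$, so $5$ splits) should show that $\varepsilon_1\in(k_{1,\mathfrak{p}}^{\times})^5$ to the accuracy required for $5$-primality at the primes above $5$. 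The same local computation, adapted to the ramified prime above $5$ in $k_2$, should give $5$-primality of $\varepsilon_2$. With $\delta_1=\delta_2=0$ in hand the displayed reflection formula yields $\varrho_5(M)\ge 2$, and the infinitude of the family follows from the solvability of the Pellian equation recorded in Remark~\ref{rmk:InfiniteFamily}.

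The hard part will be the local $5$-primality analysis carried out \emph{uniformly} over the whole infinite family: one must show that the mod-$25$ congruence on the trace propagates to membership in $(k_{i,\mathfrak{p}}^{\times})^5$ simultaneously at the split primes of $k_1$ and the ramified prime of $k_2$, rather than only for the finitely many numerically checked $d$. A secondary point is completeness — to conclude $\delta_i=0$ one must control every element of $V_5(k_i)$, not merely $\varepsilon_i$. Here, however, the reflection formula is forgiving: a non-$5$-primary virtual unit arising from a nontrivial $5$-class group contributes $+1$ to $\varrho_5(k_i)$ while adding at most $1$ to $\delta_i$, so it cannot pull $\varrho_5(M)$ below $2$. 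The genuinely dangerous possibility is a non-$5$-primary fundamental unit together with trivial $5$-class group — cases $(\mathrm{b})$ and $(\mathrm{f})$ of Table~\ref{tbl:5RankCasesImag} — and this is exactly what the congruence $\alpha+\beta\equiv 0\pmod{5^2}$ is designed to exclude.
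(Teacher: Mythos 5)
Your framework is legitimate and genuinely different from the paper's: since \(d>0\) makes both \(k_1\) and \(k_2\) real, the reflection formula \eqref{eqn:5Reflection} reads \(\varrho_5(M)=(r_1+1-\delta_1)+(r_2+1-\delta_2)\), and it suffices to show that for each \(i\) either \(r_i\ge 1\) or \(\delta_i=0\); when \(r_i=0\) the Selmer group \(V_5(k_i)\) is generated by the fundamental unit of \(k_i\), so everything reduces to the \(5\)-primality of that fundamental unit. The paper, by contrast, never invokes the reflection theorem in this proof: it exhibits \(K_{\epsilon_1^2}\) and \(K_{\epsilon_2}\) as two \emph{distinct} Frobenius extensions of \(\mathbb{Q}\), cyclic quintic and unramified over \(M\), by citing Kishi's criterion \cite[Th.\,1.1, Prop.\,3.1]{Ki2}, and gets \(\varrho_5(M)\ge 2\) directly from class field theory.

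However, your argument has a genuine gap, and it is exactly the step on which the paper's proof spends essentially all of its effort: you must show that \(\varepsilon_1\) is \emph{not a fifth power} in \(k_1\) (and \(\varepsilon_2\) not a fifth power in \(k_2\)). \(5\)-primality is a property of the class of an element in \(k^\times/(k^\times)^5\), and the class of a fifth power is trivial, hence trivially \(5\)-primary; so if \(\varepsilon_1\in(k_1^\times)^5\), your congruence computation on \(\varepsilon_1\) carries no information whatsoever about the fundamental unit, and \(\delta_1=1\) remains entirely possible. Only when the class of \(\varepsilon_1\) generates the same subgroup of \(k_1^\times/(k_1^\times)^5\) as the fundamental unit does \(5\)-primality transfer. (Your appeal to \cite{ImKi} hides the same hypothesis: the splitting field of \(f(X,\varepsilon_1)\) is a degree-\(20\) Frobenius field only when that polynomial is irreducible, i.e.\ when \(\varepsilon_1\) is not a fifth power.) The paper settles precisely this point with Nakahara's lemma \cite[Lem.\,1]{Na} --- an element \(\tfrac{u+v\sqrt{\delta}}{2}\) with \(0<\lvert v\rvert<\delta^{2}/2^{4}\) cannot be a fifth power --- verified through the explicit inequalities \(5<(5d)^2/2^4\) for \(\epsilon_2\) and \(\alpha+\beta<d^2/2^4\) for \(\epsilon_1^2\), both consequences of \eqref{eqn:Parameters}. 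Beyond this missing idea, the step you yourself call the hard part (the uniform local \(5\)-primality analysis) is only announced, never carried out; the paper avoids it altogether because Kishi's criterion takes as hypothesis the trace congruence \(\equiv\pm 2\pmod{5^3}\) of \eqref{eqn:KishiUnits} --- which is why it works with \(\epsilon_1^2\), of trace \(d+2\equiv -2\pmod{5^3}\), rather than with your \(\varepsilon_1\), whose trace \(\alpha+\beta\equiv 0\pmod{5^2}\) is not of that shape. A final small slip: the configurations your argument must exclude (\(r_i=0\) together with \(\delta_i=1\), uncompensated by the other field) are not cases \((\mathrm{b})\), \((\mathrm{f})\) of Table~\ref{tbl:5RankCasesImag}; every case in that table already satisfies \(\varrho_5(M)=2\) by hypothesis, so the table cannot delimit what could go wrong.
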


\begin{proof}
Let 
$$\epsilon_{1}=\frac{\alpha+\beta+\sqrt{d}}{2},\;
\mbox{ resp.  }\; \epsilon_{2}=\frac{\alpha+5^3\beta+5\sqrt{5d}}{2},$$
 be an element of 
$${k}_{1}= \mathbb{Q}\left( \sqrt{(\alpha+\beta)^{2}-4}\right),  \; \mbox{ resp. } \; {k}_{2}=\mathbb{Q}\left(\sqrt{5((\alpha+\beta)^{2}-4)}\right).$$ 
According to
\cite[Ex.\,3.5, p.\,489]{Ki2}, \(\epsilon_{1}\) and
\(\epsilon_{2}\) are units of \({k}_{1}\) and \({k}_{2}\), respectively.
They satisfy the conditions
\begin{equation}
\label{eqn:KishiUnits}
\begin{cases}
\mathrm{N}_{\mathbb{Q}(\sqrt{d})}\left(\epsilon_{1}^{2}\right)
=
\mathrm{N}_{\mathbb{Q}(\sqrt{5d})}\left(\epsilon_{2}\right)=1, \\[2mm]
\mathrm{Tr}_{\mathbb{Q}(\sqrt{d})}\left(\epsilon_{1}^{2}\right)
\equiv
\mathrm{Tr}_{\mathbb{Q}(\sqrt{5d})}\left(\epsilon_{2}\right)
\equiv\pm2\pmod{5^{3}}.
\end{cases}
\end{equation}
By applying
  \cite[Th.\,1.1, p.\,482, Prop.\,3.1, p.\,487]{Ki2},
   we prove that \({K}_{\epsilon_{1}^{2}}\) and
  \({K}_{\epsilon_{2}}\) are two different absolute Galois \({F}_{5}\)-extensions, unramified over \({M}\); it suffices to show that
  \(\epsilon_{1}^{2}\), resp. \(\epsilon_{2}\), cannot be the fifth power of an  element of
\({k}_{1}\), resp. \({k}_{2}\).

According to
\cite[Lem.~1, p.\,16 ]{Na},
we have the following general fact:
Let \(p\) be a prime number and \(\xi\) be an
  element of \(\mathbb{Q}(\sqrt{\delta})\) such
  that  \(\xi=\frac{u+v\sqrt{\delta}}{2}\). If
\(
0<|v|<\frac{\delta^{(p-1)/2}}{2^{p-1}}
\),
then
\(
\xi \not\in\mathbb{Q}(\sqrt{\delta})^{p}.
\)

Let us apply this result to \(\epsilon_{2}\) and \(\epsilon_{1}^{2}\).
By the assumptions
\eqref{eqn:Parameters},
\(\alpha +\beta=5^{2}c\), for some \(c\geq 1\). Hence
\((\alpha +\beta)^{2}=5^{4}c^{2}\)
and
\(5(\alpha +\beta)^{2}=5^{5}c^{2} \).
  Furthermore, \(5^{5}c^{2}\geq5^5> 36 \) and thus
\(5(\alpha +\beta)^{2}-20>16\),
whence
$$5 d> 16,\;
(5d)^2>16^2\;\mbox{ and }\;
\frac{(5 d)^{2}}{16}> 16.$$
Finally \(5<16<\frac{(5d)^{2}}{2^{4}}\),
  and if we put
  \(v:=5\)
  and
  \(\delta:=5d\),
  then
  \(v<\frac{\delta^2}{2^4}\),
  whence \(\epsilon_{2}\) cannot be the fifth power
  of an element in \({k}_{2}\).

  For \(\epsilon_{1}^{2}\), we express the square in the form
  $$\epsilon_{1}^{2}=\frac{\frac{(\alpha+\beta)^{2}+d}{2}+(\alpha+\beta)\sqrt{d}}{2}.$$
  Moreover we have,
\[
\alpha+\beta < \frac{d^{2}}{16}
\Longleftrightarrow
  16(\alpha+\beta)<
  (\alpha+\beta)^{4}-8(\alpha+\beta)^{2}+16.
\]
   Put \(u:=\alpha+\beta\). Then
$$\alpha+\beta < \frac{d^{2}}{16}\Longleftrightarrow u^{4}-8u^{2}-16u+16> 0.$$

 Since \(\alpha\geq 2\) and \(\beta \geq 2\), it follows that
   \(u\geq 3\), whence \(\phi(u)= u^{4}-8u^{2}-16u+16\) is positive.
   Thus we get \(\alpha+\beta < \frac{d^{2}}{2^4}\), and putting \(v:=\alpha+\beta\) and \(\delta:=d\) we conclude
   that \(\epsilon_{1}^{2}\) cannot be a fifth power in
   \({k}_{1}\) either.

\end{proof}

%
%
%
%

\begin{corollary}
\label{cor:InfiniteFamily}
Let 
$${M}=\mathbb{Q}\left((\zeta_{5}-\zeta_{5}^{-1})\sqrt{(\alpha+\beta)^{2}-4}\right),$$
 where the integers \(\alpha,\,\beta\) satisfy the conditions
\eqref{eqn:Parameters}.
Let 
$\varphi$ denote the generator of
\(\mathrm{Gal}\left(\mathbb{Q}(\sqrt{5})/\mathbb{Q}\right)\).
Assume that the \(5\)-class group \(\mathrm{Cl}_{5}(M)\) of \(M\) is
of type \((5,5)\). Then \(M_5^{(1)}/ M\) contains six unramified
cyclic quintic extensions \(E_i/M\), which give rise to absolute extensions of degree \(20\) over \(\mathbb{Q}\), ordered the following way:
\begin{itemize}
 \item
\({E}_{1}={K}_{\epsilon_{1}^{2}}\) of Type (I) with
\(\mathrm{Gal}({E}_1/\mathbb{Q})\simeq {F}_{5,2}\), the splitting field of the polynomial \(f(X,\epsilon_{1}^{2})=X^{5}-5X^{3}+5X-(d+2)\); 
\item
\({E}_{2}={K}_{\epsilon_{2}}\) of Type (II) with \(\mathrm{Gal}({E}_2/\mathbb{Q})\simeq {F}_{5,3}\),
the splitting field of the polynomial \(f(X,\epsilon_{2})=X^{5}-5X^{3}+5X-(\alpha+5^{3}\beta)\);
\item
the other four extensions \(E_3,\, E_4=E_3^\varphi,\, E_5,\, E_6=E_5^\varphi\), which are non-Galois of Type (III) over \(\mathbb{Q}\) and form two conjugate pairs.
\end{itemize}
\end{corollary}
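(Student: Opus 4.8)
The plan is to combine class field theory (to count the six extensions) with the explicit construction from Proposition \ref{prp:InfiniteFamily} (to locate the two Frobenius extensions), and then to invoke Proposition \ref{prp:FrobeniusFieldsImag} together with the inversion property of Theorem \ref{thm:CycQuart}(ii) to pin down the non-Galois pairs. First I would count the extensions. Since \(\mathrm{Cl}_5(M)\simeq C_5\times C_5\) has order \(25\), the Galois correspondence for the Hilbert \(5\)-class field identifies the unramified cyclic quintic extensions \(E/M\) inside \(M_5^{(1)}\) with the subgroups of index \(5\) (equivalently, of order \(5\)) in \(\mathrm{Cl}_5(M)\). The number of such subgroups in \(C_5\times C_5\) is \((5^2-1)/(5-1)=6\), giving exactly six extensions \(E_1,\ldots,E_6\). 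Each satisfies \([E_i:\mathbb{Q}]=[E_i:M]\,[M:\mathbb{Q}]=5\cdot4=20\), which establishes the degree claim.

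Next I would identify the two Galois extensions. By the proof of Proposition \ref{prp:InfiniteFamily}, \(K_{\epsilon_1^2}\) and \(K_{\epsilon_2}\) are two distinct unramified cyclic quintic extensions of \(M\), hence appear among \(E_1,\ldots,E_6\); set \(E_1=K_{\epsilon_1^2}\) and \(E_2=K_{\epsilon_2}\). The Imaoka--Kishi characterization recalled in \S\ref{ss:InfiniteFamily} gives \(\mathrm{Gal}(E_1/\mathbb{Q})\simeq F_{5,2}\) (Type (I)) since \(\epsilon_1^2\in k_1\), and \(\mathrm{Gal}(E_2/\mathbb{Q})\simeq F_{5,3}\) (Type (II)) since \(\epsilon_2\in k_2\). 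For the defining polynomials I would substitute the norm and trace data of \eqref{eqn:KishiUnits} into \(f(X,\gamma)\). Because \(\mathrm{N}_{k_1}(\epsilon_1^2)=\mathrm{N}_{k_2}(\epsilon_2)=1\), the cubic and linear coefficients collapse to \(-5\) and \(+5\); a short computation with \(\epsilon_1=\frac{(\alpha+\beta)+\sqrt{d}}{2}\) yields \(\mathrm{Tr}_{k_1}(\epsilon_1^2)=\frac{(\alpha+\beta)^2+d}{2}=d+2\) after inserting \(d=(\alpha+\beta)^2-4\), and directly \(\mathrm{Tr}_{k_2}(\epsilon_2)=\alpha+5^3\beta\), giving the two stated polynomials.

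Finally I would treat the four non-Galois extensions. Since \(E_1\) and \(E_2\) are normal over \(\mathbb{Q}\) with non-isomorphic groups \(F_{5,2}\not\simeq F_{5,3}\), we are in case (3) of Proposition \ref{prp:FrobeniusFieldsImag}, so the remaining \(E_3,E_4,E_5,E_6\) are non-Galois of Type (III). To fix the conjugation, I would use that \(\mathrm{Gal}(M/\mathbb{Q})=\langle\sigma\rangle\) permutes the six index-\(5\) subgroups of \(\mathrm{Cl}_5(M)\). Because \(\mathrm{Cl}_5(k_0^+)=\mathrm{Cl}_5(\mathbb{Q}(\sqrt{5}))=1\), Theorem \ref{thm:CycQuart}(ii) shows that \(\sigma^2\) acts on \(\mathrm{Cl}_5(M)\) by inversion; inversion stabilizes every subgroup of order \(5\), so \(\sigma^2\) fixes all six \(E_i\) and the action factors through \(\mathrm{Gal}(M/\mathbb{Q})/\langle\sigma^2\rangle\simeq\mathrm{Gal}(k_0^+/\mathbb{Q})=\langle\varphi\rangle\). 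An \(E_i\) is then Galois over \(\mathbb{Q}\) precisely when it is \(\varphi\)-fixed; the two fixed fields are \(E_1,E_2\), and the remaining four split into two \(\varphi\)-orbits of size two, which I label \(\{E_3,\,E_4=E_3^\varphi\}\) and \(\{E_5,\,E_6=E_5^\varphi\}\). As \(E_3^\varphi\) is a \(\mathbb{Q}\)-conjugate of \(E_3\), each pair consists of isomorphic fields.

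The main obstacle is this last step: one must verify that the absolute Galois action on \(\{E_1,\ldots,E_6\}\) genuinely factors through \(\varphi\), and that the two non-Galois orbits have size exactly two rather than four. Here the inversion property of Theorem \ref{thm:CycQuart}(ii), together with the triviality of \(\mathrm{Cl}_5(\mathbb{Q}(\sqrt{5}))\), does the decisive work. The trace computation for \(\epsilon_1^2\) is routine but must exploit the defining relation \(d=(\alpha+\beta)^2-4\) to reach the clean value \(d+2\).
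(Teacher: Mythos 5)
Your proposal is correct, and its core is exactly the paper's route: identify \(E_1=K_{\epsilon_1^2}\) and \(E_2=K_{\epsilon_2}\) via Proposition \ref{prp:InfiniteFamily}, read off \(\mathrm{Gal}(E_1/\mathbb{Q})\simeq F_{5,2}\) and \(\mathrm{Gal}(E_2/\mathbb{Q})\simeq F_{5,3}\) from the Imaoka--Kishi characterization, and obtain the two polynomials from \eqref{eqn:KishiUnits} together with the trace evaluations \(\mathrm{Tr}_{k_1}(\epsilon_1^2)=\frac{(\alpha+\beta)^2+d}{2}=d+2\) and \(\mathrm{Tr}_{k_2}(\epsilon_2)=\alpha+5^3\beta\), which are precisely the facts the paper's one-line proof cites. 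Where you genuinely add something is the final step. For the four remaining extensions the paper implicitly defers to Proposition \ref{prp:FrobeniusFieldsImag}(3), but even the proof of that proposition only establishes that those four fields are non-Galois over \(\mathbb{Q}\); it does not argue the pairing \(E_4=E_3^\varphi\), \(E_6=E_5^\varphi\) asserted in the statement. Your inversion argument supplies exactly this: since \(\mathrm{Cl}_5(k_0^+)=1\), Theorem \ref{thm:CycQuart}(ii) makes \(\sigma^2\) act as inversion on \(\mathrm{Cl}_5(M)\), hence trivially on the set of six index-\(5\) subgroups, so the conjugation action on \(\lbrace E_1,\ldots,E_6\rbrace\) factors through \(\langle\varphi\rangle\simeq\mathrm{Gal}(k_0^+/\mathbb{Q})\); consequently an \(E_i\) is normal over \(\mathbb{Q}\) if and only if it is \(\varphi\)-fixed, and the four non-normal fields must fall into two \(\varphi\)-orbits of size exactly two. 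Without this (or some substitute) one cannot exclude a priori that the four non-Galois fields form a single \(\mathrm{Gal}(M/\mathbb{Q})\)-orbit of length four. In short: same skeleton as the paper, but your version is complete where the paper's is elliptic, and the inversion step is the right tool for the conjugate-pair claim.
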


\begin{proof}
The claims are a consequence of Proposition
 \ref{prp:InfiniteFamily},
the ormulas of 
\eqref{eqn:KishiUnits}
 and the fact that
 \(\mathrm{Tr}_{\mathbb{Q}(\sqrt{d})}\left(\epsilon_{1}^{2}\right)=d+2\)
 and
\(\mathrm{Tr}_{\mathbb{Q}(\sqrt{5d})}\left(\epsilon_{2}\right)=\alpha+5^3\beta\).
\end{proof}

%
%
%
%

\begin{remark}
\label{rmk:Primary}
For \(d>0\), if the fundamental units of the real quadratic fields \(k_{i}\), \(i=1,2\), are \(5\)-primary,
then the field \(M\) has a non-Galois unramified cyclic quintic extension of Type (III)
\cite{Ki}.
In this case, there are four pairwise conjugate extensions of Type (III),
and among the remaining two Frobenius extensions one is of Type (I) and one is of Type (II)
\cite{Ki}.
Note that in the case \(d>0\) the cyclic quartic field \(M\) is imaginary.
Also, if \(5\) divides the class number of \(k_{i}\), \(i=1,2\),
there exists at most one \(5\)-primary element of \(k_{i}\), \(i=1,2\),
which gives rise to the Frobenius extensions of Type (I) and Type (II).
\end{remark}

%
%
%
%
%
%
%
%
%
%
%
%

\subsection{Real cyclic quartic fields \(M\) with \(d<0\)}
\label{ss:RealTheory}
\noindent
As before,
the two Frobenius groups \(F_{5,w}\) of order \(20\) with primitive root \(w\in\{2,3\}\) modulo \(5\) will be denoted
as in formula
\eqref{eqn:5Frobenius}.

%
%
%
%

\begin{proposition}
\label{prp:FrobeniusFieldsReal}
Let \({E}_1,\ldots,{E}_6\) be the six
unramified cyclic quintic extensions of the real cyclic quartic
field \({M}=\mathbb{Q}\left((\zeta_5-\zeta_5^{-1})\sqrt{d}\right)\),
\(d<0\), with \(5\)-class group \(\mathrm{Cl}_5(M)\simeq C_5\times
C_5\) or, more generally, of \(5\)-class rank \(2\). The properties
of these fields as absolute extensions \({E}_i/\mathbb{Q}\), in
dependence on the five cases in Table
 \ref{tbl:5RankCasesReal},
  are given as follows:
\begin{enumerate}
\item
In case \((\mathrm{a})\), all six fields \({E}_1,\ldots,{E}_6\) are
normal and share isomorphic automorphism groups
\(\mathrm{Gal}({E}_i/\mathbb{Q})\simeq {F}_{5,2}\), for \(1\le i\le6\).
\item
In case \((\mathrm{b})\), all six fields \({E}_1,\ldots,{E}_6\) are
normal and share isomorphic automorphism groups
\(\mathrm{Gal}({E}_i/\mathbb{Q})\simeq {F}_{5,3}\), for \(1\le i\le6\).
\item
In all the other cases \((\mathrm{c})\), \((\mathrm{d})\),
\((\mathrm{e})\), two extensions are normal with non-isomorphic
automorphism groups, say \(\mathrm{Gal}({E}_1/\mathbb{Q})\simeq
{F}_{5,2}\) and \(\mathrm{Gal}({E}_2/\mathbb{Q})\simeq {F}_{5,3}\),
but the other four extensions are non-Galois and form two conjugate
pairs \({E}_3\simeq {E}_4\) and \({E}_5\simeq {E}_6\).
\end{enumerate}
\end{proposition}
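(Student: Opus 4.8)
The plan is to follow the proof of Proposition~\ref{prp:FrobeniusFieldsImag} almost verbatim, adapting it to the present setting in which $M$ is real and, because $d<0$ forces $5d<0$, both associated quadratic fields $k_1=\mathbb{Q}(\sqrt{d})$ and $k_2=\mathbb{Q}(\sqrt{5d})$ are imaginary. First I would invoke the quintic reflection theorem \eqref{eqn:5Reflection} together with the hypothesis $\varrho_5(M)=2$ to deduce that exactly one of the five disjoint cases listed in Table~\ref{tbl:5RankCasesReal} occurs. Since imaginary quadratic fields have finite unit group of order coprime to $5$, we have $\dim_{\mathbb{F}_5}(U/U^5)=0$, so the Selmer rank collapses to the class rank, $\sigma_5(k_i)=\varrho_5(k_i)$; this is precisely why the real setting admits fewer cases than the imaginary one.

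In the two pure cases (a) and (b), all independent generators of the relevant $5$-Selmer group lie in a single quadratic field, namely $V_5(k_1)$ in case (a) and $V_5(k_2)$ in case (b). Choosing two such generators $\gamma_1,\gamma_2$, I would form the extensions $E_1:=\mathrm{Spl}_{\mathbb{Q}}f(X,\gamma_1)$ and $E_2:=\mathrm{Spl}_{\mathbb{Q}}f(X,\gamma_2)$ via Kishi's polynomial $f(X,\gamma)$. The vanishing of the pertinent $\delta$-invariant guarantees that $E_1$ and $E_2$ are unramified cyclic quintic extensions of $M$, and the Imaoka--Kishi characterization \cite{ImKi} identifies $\mathrm{Gal}(E_i/\mathbb{Q})\simeq F_{5,2}$ when $\gamma_i\in k_1$ (case (a)), respectively $\simeq F_{5,3}$ when $\gamma_i\in k_2$ (case (b)). Passing to the compositum $L:=E_1\cdot E_2$ and applying \cite[Lem.\,2.5]{Ki}, I would conclude that every proper subextension of $L/M$, and hence each of the six fields $E_1,\ldots,E_6$, carries the same Frobenius group.

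In the mixed cases (c), (d) and (e), one generator comes from $k_1$ and one from $k_2$, so that $\mathrm{Gal}(E_1/\mathbb{Q})\simeq F_{5,2}$ while $\mathrm{Gal}(E_2/\mathbb{Q})\simeq F_{5,3}$. Here I would argue by contradiction exactly as in case (d) of Proposition~\ref{prp:FrobeniusFieldsImag}: if some further subextension $E$ of $L=E_1\cdot E_2$ were normal over $\mathbb{Q}$ with $\mathrm{Gal}(E/\mathbb{Q})\simeq F_{5,2}$, then since $L=E_1\cdot E$ the lemma \cite[Lem.\,2.5]{Ki} would force \emph{every} subextension, in particular $E_2$, to carry $F_{5,2}$, contradicting $\mathrm{Gal}(E_2/\mathbb{Q})\simeq F_{5,3}$; the symmetric hypothesis $\mathrm{Gal}(E/\mathbb{Q})\simeq F_{5,3}$ is excluded in the same way. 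Hence the remaining four extensions are non-Galois over $\mathbb{Q}$; that they fall into two conjugate pairs $E_3\simeq E_4$ and $E_5\simeq E_6$ then follows from the action of $\mathrm{Gal}(M/\mathbb{Q})$ on the six index-$5$ subgroups of $\mathrm{Cl}_5(M)$, exactly as in the imaginary case.

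The principal obstacle is the preliminary bookkeeping rather than any new idea: one must verify that the five rows of Table~\ref{tbl:5RankCasesReal} genuinely exhaust all admissible combinations of $(\varrho_5(k_1),\delta_1,\varrho_5(k_2),\delta_2)$ compatible with $\varrho_5(M)=2$ under the reflection formula, using the vanishing of the unit contribution for the imaginary fields $k_i$. Once this enumeration is secured, the determination of the Frobenius types is formally identical to the imaginary case, the sole substantive ingredient being the compositum lemma \cite[Lem.\,2.5]{Ki}.
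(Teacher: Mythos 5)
Your proposal is correct and follows exactly the route the paper takes: the published proof of Proposition~\ref{prp:FrobeniusFieldsReal} consists of the single remark that it is \lq\lq similar to the proof of Proposition~\ref{prp:FrobeniusFieldsImag}\rq\rq, and your adaptation---reflection theorem plus Table~\ref{tbl:5RankCasesReal} for the case enumeration, Kishi's splitting fields and the Imaoka--Kishi characterization for the Frobenius types, the compositum lemma \cite[Lem.\,2.5]{Ki} for the pure cases, and the same contradiction argument in the mixed cases---is precisely that adaptation, with the correct added observation that for $d<0$ both $k_1$ and $k_2$ are imaginary, so the unit contribution to the Selmer rank vanishes and $\sigma_5(k_i)=\varrho_5(k_i)$.
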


\renewcommand{\arraystretch}{1.0}

\begin{table}[ht]
\caption{Some possible \(5\)-class ranks \(r_1:=\varrho_5(k_1)\), \(r_2:=\varrho_5(k_2)\)
and invariants \(\delta_1\), \(\delta_2\) for the associated quadratic fields \(k_1,k_2\)
which are \(5\)-dual to a real cyclic quartic field \(M=\mathbb{Q}((\zeta_5-\zeta_5^{-1})\sqrt{d})\), \(d<0\),
with \(5\)-class rank \(r:=\varrho_5(M)=2:\)}
\label{tbl:5RankCasesReal}

\begin{center}
\begin{tabular}{|c||c|c|c|c|c|c|c|}
\hline
 Case & \(r_1\) & \(\delta_1\) & \(r_2\) & \(\delta_2\) \\
\hline
  (a) &  \(2\) &         \(0\) &   \(0\) &        \(0\) \\
  (b) &  \(0\) &         \(0\) &   \(2\) &        \(0\) \\
  (c) &  \(1\) &         \(0\) &   \(1\) &        \(0\) \\
  (d) &  \(2\) &         \(1\) &   \(1\) &        \(0\) \\
  (e) &  \(1\) &         \(0\) &   \(2\) &        \(1\) \\
\hline
\end{tabular}
\end{center}
\end{table}


\begin{proof}
Similar to the proof of Proposition \ref{prp:FrobeniusFieldsImag}.
\end{proof}

%
%
%
%
%
%
%
%
%
%
%
%
\vskip 4mm 

\section{The second \(5\)-class group \(\mathrm{G}_{5}^{(2)}M\) of \(M\)}
\label{s:Metabelianization}
\noindent
Based on the class number formula
\cite{Lm}
for dihedral relative extensions \({E}\) of
degree \(10\) over a base field \({F}\) with class number coprime to
\(5\), we are now in a  position to determine the isomorphism type
of the Galois group \(G_{5}^{(2)}{M} = \mathrm{Gal}(M_{5}^{(2)}/{M})\)
of the second Hilbert \(5\)-class field \({M}_{5}^{(2)}\) of a
cyclic quartic field
\({M}=\mathbb{Q}\left((\zeta_{5}-\zeta_{5}^{-1})\sqrt{d}\right)\)
with \(5\)-class group of type \((5,5)\), because its
unramified cyclic quintic extensions \({E}_{i},\,\, 1 \leq i \leq 6\),
turn out to be relatively dihedral  over the  quadratic subfield \({k}_0^+ =
\mathbb{Q}(\sqrt{5})\) of \({M}\), which has class number \(1\).

%
%
%
%

\begin{theorem}
\label{thm:ClassNumbers}
 The relation between the \(5\)-class numbers
\(\mathrm{h}_5({E}_i)\) of the six unramified cyclic quintic
extensions \({E}_i\), \(1\le i\le 6\), of \({M}\) and the \(5\)-class numbers
\(\mathrm{h}_5({L}_i)\) of their non-Galois subfields \({L}_i\), which
are of relative degree \(5\) over the  field
\({k}_0^+=\mathbb{Q}(\sqrt{5})\), is given by
\begin{equation}
\label{eqn:ClNrFormula}
\mathrm{h}_5({E}_i)=
\begin{cases}
\mathrm{h}_5({L}_i)^2     & \text{ if } \;\#\ker(j_{{E}_i/{M}})=25,\;\;
(U_{{k}_0^+}:\mathrm{N}_{{L}_i/{k}_0^+}(U_{{L}_i}))=1,
 \\
5\cdot\mathrm{h}_5({L}_i)^2 & \text{ if }\;  \#\ker(j_{{E}_i/{M}})=25,\;\;
(U_{{k}_0^+}:\mathrm{N}_{{L}_i/{k}_0^+}(U_{{L}_i}))=5,
 \\
5\cdot\mathrm{h}_5({L}_i)^2 & \text{ if }\;  \#\ker(j_{{E}_i/{M}})=5,\quad
(U_{{k}_0^+}:\mathrm{N}_{{L}_i/{k}_0^+}(U_{{L}_i}))=1,
 \\
25\cdot\mathrm{h}_5({L}_i)^2&\text{ if }\; \#\ker(j_{{E}_i/{M}})=5,\quad
(U_{{k}_0^+}:\mathrm{N}_{{L}_i/{k}_0^+}(U_{{L}_i}))=5,
\end{cases}
\end{equation}
where \(U_{F}\) denotes the unit group of a field \(F\).
\end{theorem}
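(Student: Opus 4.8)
The plan is to realize each \({E}_i\) as the top of a \emph{dihedral} tower over \(k_0^+=\mathbb{Q}(\sqrt5)\) and to feed that tower into the class number formula of \cite{Lm}. First I would pin down the group-theoretic skeleton. Since \(k_0^+\) has class number \(1\), Theorem \ref{thm:CycQuart}(ii) shows that \(\tau=\sigma^2\in\mathrm{Gal}(M/k_0^+)\) acts on \(\mathrm{Cl}_5(M)\simeq(5,5)\) by inversion; hence every index-\(5\) subgroup is \(\tau\)-stable, so each \emph{unramified} cyclic quintic \({E}_i/M\) is in fact normal over \(k_0^+\) with \(\mathrm{Gal}({E}_i/k_0^+)\simeq D_5\) (the non-normality over \(\mathbb{Q}\) of \({E}_3,\dots,{E}_6\) being a separate phenomenon coming from \(\mathrm{Gal}(k_0^+/\mathbb{Q})\)). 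Under the Galois correspondence for \(D_5\) the fixed field of the cyclic subgroup \(C_5=\mathrm{Gal}({E}_i/M)\) is \(M\), while the five conjugate involutions cut out five conjugate non-normal degree-\(5\) subfields, one of which is \({L}_i\); in particular \({E}_i/{L}_i\) is quadratic and \([{L}_i:k_0^+]=5\).

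Next I would invoke the dihedral class number formula. Its representation-theoretic core is the Brauer relation
\[
\mathrm{Ind}_{\{1\}}^{D_5}1-\mathrm{Ind}_{C_5}^{D_5}1-2\,\mathrm{Ind}_{C_2}^{D_5}1+2\,\mathrm{Ind}_{D_5}^{D_5}1=0,
\]
which translates, via Artin's formalism, into the identity of Dedekind zeta functions \(\zeta_{{E}_i}\,\zeta_{k_0^+}^2=\zeta_{M}\,\zeta_{{L}_i}^2\). Comparing leading terms at \(s=1\) through the analytic class number formula converts this into a relation among the class numbers, regulators and root-of-unity factors of \({E}_i,k_0^+,M,{L}_i\); this is exactly the formula quoted from \cite{Lm}. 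I would then impose the standing hypotheses \(h(k_0^+)=1\) and \(\mathrm{Cl}_5(M)\simeq(5,5)\) (so \(\mathrm{h}_5(M)=25\)) and pass to \(5\)-parts. The archimedean and unit contributions collapse into the single unit norm index \(q_i:=(U_{k_0^+}:\mathrm{N}_{{L}_i/k_0^+}(U_{{L}_i}))\), which takes only the values \(1\) and \(5\), because \(U_{k_0^+}/U_{k_0^+}^5\) is cyclic of order \(5\) while \(\mathrm{N}_{{L}_i/k_0^+}(U_{{L}_i})\) already contains \(U_{k_0^+}^5\).

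It remains to absorb the surplus factor \(\mathrm{h}_5(M)=25\). Here I would exploit that \({E}_i/M\) is unramified cyclic of degree \(5\): the standard relation between the class number of such an extension and its capitulation (principalization) map \(j_{{E}_i/M}\colon\mathrm{Cl}_5(M)\to\mathrm{Cl}_5({E}_i)\) expresses the residual \(5\)-power as \(\#j_{{E}_i/M}(\mathrm{Cl}_5(M))=\mathrm{h}_5(M)/\#\ker(j_{{E}_i/M})\), which equals \(1\) or \(5\) according as \(\#\ker(j_{{E}_i/M})\) equals \(25\) or \(5\) — the only two possibilities for a rank-\(2\) capitulation. Combining the two reductions yields
\[
\mathrm{h}_5({E}_i)=\frac{\mathrm{h}_5(M)}{\#\ker(j_{{E}_i/M})}\cdot q_i\cdot \mathrm{h}_5({L}_i)^2,
\]
and running over the four combinations \(\#\ker(j_{{E}_i/M})\in\{5,25\}\) and \(q_i\in\{1,5\}\) reproduces precisely the four cases of \eqref{eqn:ClNrFormula}.

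The main obstacle is the bookkeeping in the middle step: one must show that, after extracting \(5\)-parts, the \emph{entire} archimedean/regulator/root-of-unity discrepancy in the analytic class number formula is captured exactly by the index \(q_i\) — essentially a Herbrand-quotient computation for the units of the dihedral tower — and that the only further \(5\)-power is the capitulation factor, with the stated ranges and no hidden cross-terms. Both reductions lean crucially on \(h(k_0^+)=1\), which kills the bottom class number and forces the \(\tau\)-action to be inversion, and on \(\mathrm{Cl}_5(M)\simeq(5,5)\), which fixes \(\mathrm{h}_5(M)=25\) and confines \(\#\ker(j_{{E}_i/M})\) to \(\{5,25\}\).
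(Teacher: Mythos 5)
Your proposal is correct and terminates in exactly the identity the paper uses, namely \(\mathrm{h}_5(E_i)=\frac{(U_{k_0^+}:\mathrm{N}_{L_i/k_0^+}(U_{L_i}))}{\#\ker(j_{E_i/M})}\cdot\mathrm{h}_5(M)\cdot\mathrm{h}_5(L_i)^2\), followed by the same enumeration of the four cases using \(\mathrm{h}_5(M)=25\), \(\#\ker(j_{E_i/M})\in\lbrace 5,25\rbrace\) and \(q_i\in\lbrace 1,5\rbrace\); but you reach that identity by a genuinely different route. The paper's proof is essentially a one-line citation of Lemmermeyer \cite[eq.\,(5.2), p.\,685]{Lm}, whereas you re-derive it: you first prove, via the inversion action from Theorem \ref{thm:CycQuart}(ii), that each \(E_i/k_0^+\) is dihedral of degree \(10\) --- a fact the paper only asserts in the preamble of \S\ref{s:Metabelianization} --- and then you invoke the Brauer relation for \(D_5\) and the analytic class number formula. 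Two caveats on your middle step. First, the zeta identity \(\zeta_{E_i}\,\zeta_{k_0^+}^2=\zeta_M\,\zeta_{L_i}^2\) is \emph{not} yet the formula quoted from \cite{Lm}: it produces a relation involving regulators and roots of unity, and converting the regulator ratio into the subfield unit index \(a_i=(U_{E_i}:U_{L_i}U_{L_i'}U_M)\), then \(a_i\) into the norm index \(q_i\) and the capitulation kernel, is the actual content of Lemmermeyer's Theorem 2.4 together with the paper's Lemma \ref{lem:UnitIndices} and the Herbrand-quotient identity \(\#\ker(j_{E_i/M})=5\cdot(U_M:\mathrm{N}_{E_i/M}(U_{E_i}))\), valid because \(E_i/M\) is unramified; you correctly flag this as the main obstacle, and it is precisely the alternative proof spelled out in the paper's Remark \ref{rmk:UnitIndices}. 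Second, your heuristic that the residual \(5\)-power equals \(\mathrm{h}_5(M)/\#\ker(j_{E_i/M})\) ``because that is the order of the image of the capitulation map'' is not by itself a derivation: the kernel enters through the unit Herbrand quotient, not through the order of the capitulated image. In sum, your plan is viable and buys self-containedness (it even supplies the dihedral structure of \(E_i/k_0^+\) and the bound \(q_i\in\lbrace 1,5\rbrace\), which the paper takes for granted), at the cost that its unfinished bookkeeping step is the entire substance of the theorem the paper simply cites.
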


\begin{proof}
According to Lemmermeyer
 \cite[eq. (5.2), p. 685]{Lm},
 we have the class number relation
\[
\mathrm{h}_5({E}_i)
=
\frac{(U_{{k}_0^+}:\mathrm{N}_{{L}_i/{k}_0^+}(U_{{L}_i}))}{\#\ker(j_{{E}_i/{M}})}\cdot\mathrm{h}_5({M})\cdot\mathrm{h}_5({L}_i)^2,
\]
  where \(\mathrm{h}_5({M})=25\), due to our general assumption on \({M}\).
Distinction between total principalization, \(\#\ker(j_{{E}_i/{M}})=25\), and partial principalization, \(\#\ker(j_{{E}_i/{M}})=5\),
immediately yields the four claimed cases, in dependence on the
unit norm indices \(u_i:=(U_{{k}_0^+}:\mathrm{N}_{{L}_i/{k}_0^+}(U_{{L}_i}))\).
\end{proof}

%
%
%
%

\begin{remark}
\label{rmk:UnitIndices}
In order to prove Theorem \ref{thm:ClassNumbers}
in a different manner,
we can use the class number formula, due to Lemmermeyer
\cite[Th.\,2.4, p.\,681]{Lm},
and the following Lemma
\ref{lem:UnitIndices}.
\end{remark}

%
%
%
%

\begin{lemma}
\label{lem:UnitIndices}
Let \(p\) be an odd prime and let \(F\) be a number field with class number coprime to \(p\).
Let \(k\) be a quadratic extension of \(F\).
Assume that \(L\) is an unramified cyclic extension of \(k\) of degree \(p\).
Then the extension \(L/F\) is Galois, dihedral of degree \(2p\), and we have the formula
\[a:=\left(U_{L}:U_{K}U_{K'}U_{k}\right)=
\frac{\left(U_{k}:U_{k}^{p}\right)\left(U_{F}:\mathrm{N}_{K/F}(U_{K})\right)}{\left(U_{F}:U_{F}^{p}\right)\left(U_{k}:\mathrm{N}_{L/k}(U_{L})\right)}\]
for the subfield unit index \(a\), where \(K\ne K^\prime\) denote two conjugate non-Galois subfields of \(L\).
\end{lemma}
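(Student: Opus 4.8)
The goal is to compute the subfield unit index $a=\left(U_{L}:U_{K}U_{K'}U_{k}\right)$ attached to the dihedral degree-$2p$ extension $L/F$, where $K,K'$ are the two conjugate non-Galois subfields of degree $p$ over $F$ and $k$ is the quadratic resolvent. The natural approach is to analyze the Galois action of $D_p=\mathrm{Gal}(L/F)$ on the unit group $U_L$ via its $\mathbb{Z}[D_p]$-module structure, and to translate the various inclusions among $U_K$, $U_{K'}$, $U_k$, and $U_F$ into a chain of index relations connecting $a$ to the norm indices $\left(U_F:\mathrm{N}_{K/F}(U_K)\right)$ and $\left(U_k:\mathrm{N}_{L/k}(U_L)\right)$.

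First I would verify the dihedral structure: since $L/k$ is unramified cyclic of degree $p$ and $k/F$ is quadratic, the nontrivial element of $\mathrm{Gal}(k/F)$ acts on $\mathrm{Gal}(L/k)\cong C_p$, and because $L/F$ is non-abelian (the subfields $K,K'$ are non-Galois over $F$) this action must be by inversion, forcing $\mathrm{Gal}(L/F)\cong D_p$. The two order-$p$ subfields $K,K'$ correspond to the two conjugate reflection subgroups. Next I would set up the index computation by comparing $L$-units against the subgroup generated by the three proper subfield unit groups. The clean way is to use the factor group $U_L/U_k$ as a $\mathbb{Z}[\mathrm{Gal}(L/k)]$-module and exploit that $\mathrm{Gal}(L/k)\cong C_p$ has trivial and augmentation components; Herbrand quotients and the norm map $\mathrm{N}_{L/k}$ then control the index. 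The term $\left(U_k:U_k^p\right)/\left(U_F:U_F^p\right)$ on the right-hand side accounts for the Selmer-type contribution of the base fields under raising to the $p$th power, which is exactly what enters when one counts the $p$-torsion in the relevant cohomology.

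Concretely, the key steps are: (1) establish an exact sequence relating $U_L$, the norm image $\mathrm{N}_{L/k}(U_L)$, and $U_k$ via the Tate cohomology $\hat{H}^0(\mathrm{Gal}(L/k),U_L)$, whose order is $\left(U_k:\mathrm{N}_{L/k}(U_L)\right)$; (2) do the analogous computation for $K/F$ to bring in $\left(U_F:\mathrm{N}_{K/F}(U_K)\right)$; (3) combine these with the elementary indices $\left(U_k:U_k^p\right)$ and $\left(U_F:U_F^p\right)$, which are determined purely by the unit ranks of $k$ and $F$ together with the presence of $p$th roots of unity; and (4) assemble everything into the displayed quotient for $a$. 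The hypothesis that the class number of $F$ is coprime to $p$ is essential here, since it guarantees that the ambiguous ideal classes and the capitulation phenomena do not contribute extra factors, so that the entire discrepancy between $U_L$ and $U_KU_{K'}U_k$ is carried by unit norms alone.

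\textbf{Main obstacle.} The hard part will be step (3): correctly bookkeeping the interaction between the two reflection subfields $K$ and $K'$ inside $L$. Because $U_KU_{K'}$ is a product of two overlapping subgroups (both containing $U_F$), the naive multiplicativity of indices fails, and I expect one must carefully identify $U_K\cap U_{K'}$ with $U_F$ up to a controlled $p$-power discrepancy, using that $K$ and $K'$ are $F$-conjugate so that $\mathrm{N}_{K/F}$ and $\mathrm{N}_{K'/F}$ have equal images. Managing this overlap, rather than the individual cohomological computations, is where the subtlety lies; the factor $\left(U_k:U_k^p\right)$ appearing in the numerator is precisely the arithmetic residue of this overlap once the dihedral symmetry is fully exploited. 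I would therefore either invoke Lemmermeyer's class-number formula \cite[Th.\,2.4, p.\,681]{Lm} directly as the engine and solve for $a$, or reconstruct the index via a determinant of the unit lattice under the $D_p$-action, whichever keeps the conjugate-subfield overlap most transparent.
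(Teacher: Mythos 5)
The paper offers no proof of this lemma at all: it is quoted from Lemmermeyer \cite[p.~686]{Lm}, and the surrounding text merely combines it with the Herbrand--quotient relation \(\#\ker(j_{L/k})=p\cdot\left(U_{k}:\mathrm{N}_{L/k}(U_{L})\right)\) and the rank identity \(\left(U_{k}:U_{k}^{p}\right)/\left(U_{F}:U_{F}^{p}\right)=p^{r(k)-r(F)}\) to recover the factor used in Theorem~\ref{thm:ClassNumbers}. Your proposal must therefore stand on its own, and its first step already contains a genuine gap: your derivation of the Galois, dihedral structure of \(L/F\) is circular. To speak of the action of \(\mathrm{Gal}(k/F)\) on \(\mathrm{Gal}(L/k)\) you must already know that \(L/F\) is normal, and you exclude the abelian case by appealing to the non-Galois subfields \(K,K^{\prime}\), whose existence is part of the conclusion to be proved. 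Worse, this is exactly the step where the hypothesis \(\gcd(h_F,p)=1\) is needed, and you never invoke it there. The correct argument runs: since \(\mathrm{N}_{k/F}\) maps \(\mathrm{Cl}_p(k)\) into \(\mathrm{Cl}_p(F)=1\) and \(j_{k/F}\circ\mathrm{N}_{k/F}\) acts as \(1+\sigma\) on \(\mathrm{Cl}(k)\), every \(c\in\mathrm{Cl}_p(k)\) satisfies \(c^{1+\sigma}=1\), so \(\sigma\) acts by inversion on \(\mathrm{Cl}_p(k)\); hence the index-\(p\) subgroup of \(\mathrm{Cl}(k)\) attached to \(L\) by class field theory is \(\sigma\)-invariant, \(L/F\) is normal, and the inversion action forces \(\mathrm{Gal}(L/F)\) to be dihedral of order \(2p\), with \(K\) and \(K^{\prime}\) the fixed fields of two of the \(p\) conjugate reflections. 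Without this hypothesis \(L/F\) need not be Galois at all, so any proof that does not use it in this step cannot be correct.

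The second gap is that the index formula --- the actual content of the lemma --- is never derived. You rightly isolate the overlap of \(U_K\), \(U_{K^{\prime}}\), \(U_k\) inside \(U_L\) as the crux, but you do not resolve it, and the two fallback routes you offer fail as stated. Solving for \(a\) from Lemmermeyer's Theorem 2.4 alone is impossible, because that theorem expresses \(\mathrm{h}_p(L)\) in terms of \(a\); one needs a second, independent expression for \(\mathrm{h}_p(L)\) in terms of norm indices, namely formula (5.2) of \cite{Lm} (the one actually used in the proof of Theorem~\ref{thm:ClassNumbers}), which you neither state nor prove. Since, given Theorem 2.4, the Herbrand relation and the rank identity above, the lemma and (5.2) are equivalent to one another, invoking one to obtain the other requires being explicit about which is established independently; your plan leaves this unresolved and hence is either incomplete or circular. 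The alternative route, ``a determinant of the unit lattice under the \(D_p\)-action'', is a restatement of the problem rather than a method. A smaller inaccuracy: since \(K\cap K^{\prime}=F\), one has \(U_K\cap U_{K^{\prime}}=U_F\) exactly, not merely ``up to a controlled \(p\)-power discrepancy''; the genuine difficulty is the failure of index multiplicativity for the product \(U_KU_{K^{\prime}}U_k\), together with the torsion and rank bookkeeping, which is where the factors \(\left(U_{k}:U_{k}^{p}\right)\) and \(\left(U_{F}:U_{F}^{p}\right)\) actually enter.
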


Since \(p\ge 3\) is an odd prime
and the existence of an unramified cyclic extension \(L/k\) of degree \(p\) excludes
the irregular case \(p=3\), \(F=\mathbb{Q}\), \(k=\mathbb{Q}(\sqrt{-3})\) with \(h_k=1\),
either both fields \(k\) and \(F\) contain the \(p\)th roots of unity or both not.
Therefore, 
$$\frac{\left(U_{k}:U_{k}^{p}\right)}{\left(U_{F}:U_{F}^{p}\right)}=p^{r(k)-r(F)}$$
with the torsion-free Dirichlet unit ranks \(r(k)\) of \(k\) and \(r(F)\) of \(F\).
For an unramified extension \(L/k\),
the Theorem on the Herbrand quotient of \(U_L\) is equivalent with
\(\#\ker(j_{L/k})=p\cdot b\) with \(b:=\left(U_{k}:\mathrm{N}_{L/k}(U_{L})\right)\).
Using Lemma
\ref{lem:UnitIndices},
which can be found in
\cite[p.\,686]{Lm},
we can express the factor on the right hand side of the class number relation
\cite[Th.\,2.4, p.\,681]{Lm},
\[\mathrm{h}_p(L)=\frac{a}{p^{1+r(k)-r(F)}}\cdot\mathrm{h}_p(k)\cdot\mathrm{h}_p(K)^2,\]
in the form
$$\frac{a}{p^{1+r(k)-r(F)}}=\frac{a\cdot\left(U_{F}:U_{F}^{p}\right)}{p\cdot\left(U_{k}:U_{k}^{p}\right)}
=\frac{\left(U_{F}:\mathrm{N}_{K/F}(U_{K})\right)}{\#\ker(j_{L/k})},$$
which we have used for \(p=5\), \(F=k_0^+\), \(k=M\), \(L=E_i\), \(K=L_i\) in the proof of Theorem
\ref{thm:ClassNumbers}.

%
%
%
%
%
%
%
%
%
%
%
%

\subsection{Imaginary cyclic quartic fields \(M\) with \(d>0\)}
\label{ss:ImaginaryFields}


\begin{theorem}
\label{thm:ClassNumbersImag}
The \(5\)-class field tower of \(M\) has length \(\ell_5{M}=1\)
if and only if the second \(5\)-class group \(\mathrm{G}_5^{2}M\) of \(M\)
is the abelian \(5\)-group \(\langle 25,2\rangle\) of type \((5,5)\).
In this case,
\begin{enumerate}
\item
the \(5\)-class groups \(\mathrm{Cl}_5(E_i)\) are cyclic of order \(5\), for \(1\le i\le 6\),
\item
the \(5\)-class groups \(\mathrm{Cl}_5(L_i)\) are trivial, for \(1\le i\le 6\),
\item
the \(5\)-principalization of \(M\) is of type \(\mathrm{a}.1\), \(\varkappa(M)=(000000)\).
\end{enumerate}
\end{theorem}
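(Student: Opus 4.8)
The plan is to prove the equivalence by characterizing exactly when the second $5$-class group $\mathrm{G}_5^{(2)}{M}$ equals the abelianization $\mathrm{Cl}_5(M)\simeq (5,5)$ itself, namely $\langle 25,2\rangle$. The defining feature of $\ell_5{M}=1$ is that the Hilbert $5$-class field $M_5^{(1)}$ already has trivial $5$-class group, so $M_5^{(1)}=M_5^{(2)}=M_5^{(\infty)}$ and $\mathrm{G}_5^{(2)}{M}\simeq\mathrm{Cl}_5(M)\simeq (5,5)$ is abelian. Conversely, if $\mathrm{G}_5^{(2)}{M}$ is abelian, then its derived subgroup is trivial, so $M_5^{(2)}=M_5^{(1)}$ and the tower terminates at stage $1$. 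Thus the heart of the matter is to show that abelianness of $\mathrm{G}_5^{(2)}{M}$ forces the three numerical consequences, and that these in turn are equivalent to $\ell_5{M}=1$.

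First I would translate $\ell_5{M}=1$ into statements about the six unramified extensions $E_i/M$ via the class number relation of Theorem \ref{thm:ClassNumbers}. When the tower has length one, $M_5^{(1)}/M$ is the maximal unramified abelian $5$-extension and each $E_i$ is an intermediate field; the reflection/Galois-action constraints of Section \ref{s:GaloisAction} pin down $\mathrm{G}_5^{(2)}{M}=\langle 25,2\rangle$ as the only abelian contestant compatible with $\mathrm{Cl}_5(M)=(5,5)$. For claim (3), the principalization type $\varkappa(M)=(000000)$ corresponds to the group $\langle 3125,3\rangle=\Phi_6(1^5)$ in Table \ref{tbl:TrfKerStem5Icl6} whose transfer kernel is the constant type; in the abelian case this degenerates so that \emph{every} class of $\mathrm{Cl}_5(M)$ capitulates (principalizes) in each $E_i$, which is exactly total principalization $\#\ker(j_{E_i/M})=25$ and the type-$\mathrm{a}.1$ label $(000000)$. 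This is the forward direction: abelian $\Longrightarrow$ total capitulation $\Longrightarrow$ claims (1)--(3).

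Next I would derive claims (1) and (2) from (3) using Theorem \ref{thm:ClassNumbers}. Total principalization gives $\#\ker(j_{E_i/M})=25$, so only the first two cases of \eqref{eqn:ClNrFormula} apply, yielding $\mathrm{h}_5(E_i)=\mathrm{h}_5(L_i)^2$ or $5\cdot\mathrm{h}_5(L_i)^2$. Because the tower has length one, $E_i\subseteq M_5^{(1)}$ is itself unramified over $M$ with $M_5^{(1)}$ having trivial $5$-class group; hence $\mathrm{Cl}_5(E_i)$ must be the quotient reflecting the index $[M_5^{(1)}:E_i]=5$, which forces $\mathrm{Cl}_5(E_i)$ cyclic of order $5$ and $\mathrm{h}_5(E_i)=5$. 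Substituting $\mathrm{h}_5(E_i)=5$ into the surviving cases forces $\mathrm{h}_5(L_i)=1$, i.e.\ $\mathrm{Cl}_5(L_i)$ trivial, which is claim (2); and the consistency with $\mathrm{h}_5(E_i)=5$ then selects the case with unit norm index $u_i=5$, closing claim (1). For the converse, I would run this argument backward: assuming (1)--(3), the relation \eqref{eqn:ClNrFormula} together with the trivial $\mathrm{Cl}_5(L_i)$ forces $\mathrm{h}_5(E_i)=5$, so no $E_i$ admits a further unramified $5$-extension, whence $M_5^{(2)}=M_5^{(1)}$ and $\mathrm{G}_5^{(2)}{M}$ is abelian of type $(5,5)$, i.e.\ $\langle 25,2\rangle$.

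The main obstacle will be rigorously justifying that abelianness of the second $5$-class group forces \emph{total} principalization in all six $E_i$ simultaneously, i.e.\ step (3), rather than merely in some of them. Here I would lean on the $\sigma$-group machinery of Section \ref{s:GaloisAction}: by Theorem \ref{thm:CycQuart} the group $\mathrm{G}_5^{(2)}{M}$ is a $\sigma$-group of degree $4$, and when it is abelian the Galois action by $\mathrm{Gal}(M/\mathbb{Q})$ together with the transfer (Artin) map homogenizes the capitulation across the orbit of the six extensions, which are permuted by $\mu$ and $\varphi$ as described in Propositions \ref{prp:FrobeniusFieldsImag} and \ref{prp:InfiniteFamily}. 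The delicate point is ruling out mixed behaviour (some $E_i$ with partial principalization) by showing it is incompatible with the abelian structure $\langle 25,2\rangle$; I expect to invoke the classification in Table \ref{tbl:TrfKerStem5Icl6}, observing that only $\langle 3125,3\rangle$ with $\varkappa=(000000)$ projects onto the abelian quotient with the required constant type, so that $\mathrm{G}_5^{(2)}{M}=\langle 25,2\rangle$ itself must exhibit the fully-principalizing pattern $(000000)$.
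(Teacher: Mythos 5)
Most of your plan coincides, in substance, with the paper's own proof: the equivalence \(\ell_5{M}=1\Leftrightarrow\mathrm{G}_5^{(2)}{M}\simeq\langle 25,2\rangle\) is just the triviality of the derived subgroup; once the tower stops at the first stage one gets \(\mathrm{Cl}_5(E_i)\simeq\mathrm{Gal}(M_5^{(1)}/E_i)\simeq C_5\); and feeding \(\mathrm{h}_5(E_i)=5\) together with \(\#\ker(j_{E_i/M})=25\) into Theorem \ref{thm:ClassNumbers} kills the case \(5=\mathrm{h}_5(L_i)^2\) (five is not a square) and leaves \(5=5\cdot\mathrm{h}_5(L_i)^2\), whence \(\mathrm{h}_5(L_i)=1\) and \(u_i=5\) — exactly the paper's computation. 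The genuine problem sits in the step you yourself single out as "the main obstacle": your proposed justification that abelianness forces \emph{total} principalization in all six \(E_i\), i.e.\ claim (3), would fail as written. You argue that "only \(\langle 3125,3\rangle\) with \(\varkappa=(000000)\) projects onto the abelian quotient with the required constant type", and that therefore \(\langle 25,2\rangle\) must exhibit the pattern \((000000)\). This is backwards: the groups in Table \ref{tbl:TrfKerStem5Icl6} have order \(5^5\), \emph{every one} of them has abelianization \(\langle 25,2\rangle\), and the capitulation type of \(M\) is the transfer kernel type of \(\mathrm{G}_5^{(2)}{M}\) itself, computed inside that group — it is not something read off from larger groups surjecting onto it. The table would be relevant only if \(\mathrm{G}_5^{(2)}{M}\) had order \(5^5\), which is the opposite of the present hypothesis.

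The correct argument is one line and needs none of the \(\sigma\)-group machinery, no Galois-orbit "homogenization", and no ruling out of "mixed behaviour": by Artin's reciprocity, \(\ker(j_{E_i/M})\) is the kernel of the transfer \(V_{G\to H_i}:G\to H_i\) with \(H_i=\mathrm{Gal}(M_5^{(2)}/E_i)\) of index \(5\); for an \emph{abelian} group \(G\) the transfer to a subgroup of index \(n\) is \(x\mapsto x^n\), and here \(n=5\) while \(G\simeq(5,5)\) has exponent \(5\), so \(V_{G\to H_i}\) is trivial and \(\ker(j_{E_i/M})=\mathrm{Cl}_5(M)\) for all six \(i\) simultaneously, i.e.\ \(\varkappa(M)=(000000)\). (This is also what the paper silently invokes.) A smaller flaw: in your sketch of the converse you claim \(\mathrm{h}_5(E_i)=5\) means "no \(E_i\) admits a further unramified \(5\)-extension"; that is false as stated, since each \(E_i\) has a Hilbert \(5\)-class field of degree \(5\) over it (namely \(M_5^{(1)}\)). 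Fortunately this direction is superfluous: the "if and only if" of the theorem is between \(\ell_5{M}=1\) and abelianness of \(\mathrm{G}_5^{(2)}{M}\), which your opening paragraph already settles, and claims (1)--(3) are asserted only as consequences of that case.
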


\begin{proof}
For \(\mathrm{G}_5^{2}M\simeq\langle 25,2\rangle\),
we have the cyclic \(5\)-class groups \(\mathrm{Cl}_5(E_i)\simeq C_5\) and
six total principalizations \(\#\ker(j_{E_i/M})=25\).
According to Theorem
\ref{thm:ClassNumbers},
we obtain
\(\mathrm{h}_5(E_i)=5=u_i\cdot\mathrm{h}_5(L_i)^2\),
which enforces
\(\mathrm{h}_5(L_i)=1\) and \(u_i=5\), for all \(1\le i\le 6\).
\end{proof}

%
%
%
%

\begin{example}
\label{exm:AbelianImag}
The  values \(d=4357\) and \(d=4444\) give rise
to fields \({M}=\mathbb{Q}\left((\zeta_{5}-\zeta_{5}^{-1})\sqrt{d}\right)\)
with \(5\)-class group of type \((5,5)\) having a
single-stage \(5\)-class tower. Fields of this type are extremely rare, since
they form a fraction of \(\frac{2}{83}\) among the fields with \(0<d< 10 000\).  Therefore, only  about \(2\%\)
of the cases possess a single-stage tower.
\end{example}

%
%
%
%

\begin{proposition}
\label{prp:UnitIndicesImag}
Let \(M=\mathbb{Q}\left((\zeta_{5}-\zeta_{5}^{-1})\sqrt{d}\right)\), with \(d>0\),  be
an imaginary cyclic quartic field with \(5\)-class group of type \((5,5)\).
Let \(E_i,\,\,1\leq i\leq 6\), be the six unramified cyclic quintic extensions of \(M\)
and \(L_i\) their non-Galois subfields
of relative degree \(5\) over the field \(k_0^+=\mathbb{Q}(\sqrt{5})\).
Then the following holds true for each \(1\le i\le 6\):
\begin{enumerate}
\item
the subfield unit indices \(a_{i}:=\left(U_{E_{i}}:U_{L_{i}}U_{L_{i}^\prime}U_{M}\right)\) are equal to \(1\),
\item
the unit norm indices \(u_{i}\) satisfy the equivalence \(u_{i}=1\Longleftrightarrow\#\ker(j_{E_{i}/M})=5\), 
\item
the relations between the \(5\)-class numbers \(\mathrm{h}_5(E_i)\) and \(\mathrm{h}_5(L_i)\)
are given by
\[\mathrm{h}_5(E_i)=5\cdot \mathrm{h}_5(L_i)^{2}.\]
\end{enumerate}
\end{proposition}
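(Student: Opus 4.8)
The plan is to deduce all three assertions from the single equality $a_i=1$, and to obtain the latter by pitting the two factorisations of the norm $\mathrm{N}_{E_i/k_0^+}$ against each other.

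First I would fix the signatures. The unique involution of $\mathrm{Gal}(M/\mathbb{Q})\simeq C_4$ is complex conjugation, and its fixed field is $k_0^+=\mathbb{Q}(\sqrt5)$; hence for $d>0$ the field $M$ is totally imaginary, i.e. a CM field with maximal totally real subfield $k_0^+$, whence $r(M)=1=r(k_0^+)$ for the Dirichlet ranks. Since $\sqrt d\notin\mathbb{Q}(\zeta_5)$ for square-free $d$ prime to $5$, we have $M\neq\mathbb{Q}(\zeta_5)$ and $\mu_M=\{\pm1\}$, so neither $M$ nor $k_0^+$ contains $\zeta_5$ and $(U_M:U_M^5)=(U_{k_0^+}:U_{k_0^+}^5)=5$. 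As $M$ is totally imaginary and $M\subseteq E_i$, the field $E_i$ is totally imaginary of rank $9$, while $L_i,L_i'$, being fixed fields of the reflections in $\mathrm{Gal}(E_i/k_0^+)\simeq D_5$ (which form a single conjugacy class represented by complex conjugation), are totally real of rank $9$. Substituting $F=k_0^+$, $k=M$, $L=E_i$, $K=L_i$ into Lemma~\ref{lem:UnitIndices} and cancelling the equal factors $(U_M:U_M^5)=(U_{k_0^+}:U_{k_0^+}^5)$ yields
\[
a_i=\frac{u_i}{b_i},\qquad u_i=(U_{k_0^+}:\mathrm{N}_{L_i/k_0^+}(U_{L_i})),\quad b_i=(U_M:\mathrm{N}_{E_i/M}(U_{E_i})).
\]

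Next I would bound $u_i$ and $b_i$ against each other in two ways. Since $U_{L_i}$ already has full rank $9$ in $U_{E_i}$, the index $a_i$ is a positive integer, so $a_i\ge 1$ and therefore $u_i\ge b_i$. For the reverse inequality I would invoke the CM structure: on $U_M\otimes\mathbb{Z}_5=U_{k_0^+}\otimes\mathbb{Z}_5$ (the Hasse index $(U_M:\mu_MU_{k_0^+})$ being prime to $5$) the relative norm $\mathrm{N}_{M/k_0^+}$ is just squaring, hence an \emph{isomorphism} on $5$-parts. Writing $\mathrm{N}_{E_i/k_0^+}=\mathrm{N}_{M/k_0^+}\circ\mathrm{N}_{E_i/M}=\mathrm{N}_{L_i/k_0^+}\circ\mathrm{N}_{E_i/L_i}$, the first factorisation identifies the $5$-part of $(U_{k_0^+}:\mathrm{N}_{E_i/k_0^+}(U_{E_i}))$ with $b_i$, while the second gives $\mathrm{N}_{E_i/k_0^+}(U_{E_i})\subseteq\mathrm{N}_{L_i/k_0^+}(U_{L_i})$ and hence that this $5$-part is at least $u_i$. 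Thus $b_i\ge u_i$, and together with the previous step $u_i=b_i$, so $a_i=1$, proving $(1)$.

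Finally, $(2)$ follows from $u_i=b_i$ and the Herbrand relation $\#\ker(j_{E_i/M})=5b_i$ recorded after Lemma~\ref{lem:UnitIndices}, since then $u_i=1\iff b_i=1\iff\#\ker(j_{E_i/M})=5$; and $(3)$ follows from Theorem~\ref{thm:ClassNumbers}, whose underlying relation reads $\mathrm{h}_5(E_i)=5a_i\,\mathrm{h}_5(L_i)^2$ after inserting $\mathrm{h}_5(M)=25$ and $\#\ker=5b_i$, so that $a_i=1$ gives $\mathrm{h}_5(E_i)=5\,\mathrm{h}_5(L_i)^2$. The hard part is precisely excluding the configuration $\#\ker(j_{E_i/M})=5$ with $u_i=5$, i.e. $a_i=5$, which integrality alone cannot rule out; the decisive input is the CM isomorphism $\mathrm{N}_{M/k_0^+}$ on $5$-parts, and this is exactly where the hypothesis $d>0$ (imaginary $M$) enters, the argument failing for real $M$ where $r(M)=3$. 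Along the way I would also note $u_i\in\{1,5\}$, since $\mathrm{N}_{L_i/k_0^+}(U_{L_i})\supseteq\langle-1,\varepsilon^5\rangle$ for the fundamental unit $\varepsilon$ of $k_0^+$, guaranteeing that every index in play is a power of $5$.
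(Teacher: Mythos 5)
Your proof follows the same skeleton as the paper's: both substitute \(F=k_0^+\), \(k=M\), \(L=E_i\), \(K=L_i\) into Lemma \ref{lem:UnitIndices}, cancel \((U_M:U_M^5)=(U_{k_0^+}:U_{k_0^+}^5)\) using the CM structure of \(M\) to arrive at \(a_ib_i=u_i\), and then prove \(u_i=b_i\), after which (1), (2), (3) follow exactly as you describe. Within that skeleton your two inequalities are variants of the paper's two implications. Your \(5\)-part argument for \(b_i\ge u_i\) (squaring is invertible on \(5\)-parts, applied to the two factorisations of \(\mathrm{N}_{E_i/k_0^+}\)) is the index-theoretic form of the paper's explicit computation \(\epsilon_5=\mathrm{N}_{L_i/k_0^+}\left(\epsilon_5^{-1}\cdot\mathrm{N}_{E_i/L_i}(\xi^3)\right)\); both rest on \(\gcd(2,5)=1\). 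Your derivation of \(u_i\ge b_i\) from integrality of \(a_i\) is a genuine shortcut: the paper instead argues element-wise that a unit \(\xi\in U_{L_i}\) with \(\mathrm{N}_{L_i/k_0^+}(\xi)=\epsilon_5\) also satisfies \(\mathrm{N}_{E_i/M}(\xi)=\epsilon_5\), using the dihedral structure.

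There is, however, one genuinely false claim in your write-up: \(L_i\) and \(L_i'\) are \emph{not} totally real, and their unit rank is \(5\), not \(9\). Write \(L_i=\mathrm{Fix}(\rho)\) for a reflection \(\rho\) in the dihedral group \(\mathrm{Gal}(E_i/k_0^+)\) of order \(10\). Above each of the two real places \(v\) of \(k_0^+\) there are five complex places of \(E_i\), forming a single orbit; since the stabilizer of an archimedean place is its decomposition group and \(\langle\rho\rangle\) is self-normalizing, these five places carry the five \emph{distinct} reflection subgroups as decomposition groups. Hence exactly one place of \(L_i\) above \(v\) is real and two are complex, so \(L_i\) has signature \((2,4)\) and Dirichlet rank \(5\); being the fixed field of \emph{one} complex conjugation does not make a field real at its other embeddings (the classical instance is the cubic subfield of the Hilbert class field of \(\mathbb{Q}(\sqrt{-23})\), of signature \((1,1)\)). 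This error sits exactly under the pivot of your argument, namely that \(a_i\) is a finite positive integer: \(U_{L_i}\) alone does \emph{not} have full rank in \(U_{E_i}\). The gap is repairable in two ways: finiteness of \(a_i\) is part of the assertion of Lemma \ref{lem:UnitIndices} (Lemmermeyer's formula presupposes and proves it); or one verifies directly that \(U_{L_i}U_{L_i'}U_M\) has full rank \(9\), since \(U_{E_i}\otimes\mathbb{Q}\simeq\mathbb{Q}\oplus W\oplus W\) as modules over the group ring of the dihedral group, with \(W\) the \(4\)-dimensional rational irreducible, and \(W^{\langle\rho\rangle}+W^{\langle\rho'\rangle}=W\) for distinct reflections \(\rho\ne\rho'\) because an element fixed by two distinct reflections is fixed by the whole group. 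With that repair, your proof is correct.
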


\begin{proof}
According to Lemma
\ref{lem:UnitIndices},
we can deduce that
\[a_{i}b_{i}=\frac{\left(U_{M}:U_{M}^{5}\right)\left(U_{k_0^+}:\mathrm{N}_{L_{i}/k_0^+}(U_{L_{i}})\right)}
{\left(U_{k_0^+}:U_{k_0^+}^{5}\right)},\]
where \(b_{i}\) denotes the unit norm index \(\left(U_{M}:\mathrm{N}_{E_{i}/M}(U_{E_{i}})\right)\).
Since \(d>0\), the field \(M\) is imaginary and
it is a CM-field with maximal real subfield \(M^+=k_0^+\).
Hence, the torsion-free Dirichlet unit rank of \(M\) is \(r(M)=1\), and
\(U_{M}=\langle -1,\epsilon_{5}\rangle\),
where \(\epsilon_{5}\) denotes the fundamental unit of the quadratic field \(k_0^+=\mathbb{Q}(\sqrt{5})\).
This implies that
\[\left(U_{M}:U_{M}^{5}\right)=\left(U_{k_0^+}:U_{k_0^+}^{5}\right) \,\,
\text{ and }\,\, a_{i}b_{i}=u_{i}.\]

(1) To prove the first assertion,  it suffices to show the following equivalence:
\[u_{i}=1 \,\,\,\text{if and only if }\,\,\,\, b_{i}=1.\]
So it suffices to show that the fundamental unit \(\epsilon_{5}\) of \(k_0^+\),
which is also the fundamental unit of \(M\),
is the norm of a unit of \(E_{i}\) if and only if it is the norm of a unit of \(L_{i}\).
If \(u_{i}=1\), for \(i\in\lbrace 1,\ldots,6\rbrace\),
then \(\epsilon_{5}\) is the norm of a unit of \(L_{i}\) (a non-Galois subfield of \(E_{i}\)),
hence it is also the norm of the same unit in \(E_{i}\), and \(b_i=1\).

Now suppose that \(b_{i}=1\), for \(1\leq i\leq 6\).
Then there exists a unit \(\xi\in U_{E_{i}}\) such that
\mbox{\(\epsilon_{5}=\mathrm{N}_{E_{i}/M}(\xi)\)}, and we obtain the following chain of implications: 
$$ \begin{array}{ll}
\mathrm{N}_{M/\mathbb{Q}(\sqrt{5})}(\epsilon_{5})
&=
\mathrm{N}_{M/ \mathbb{Q}(\sqrt{5})}\left(\mathrm{N}_{E_{i}/ M}(\xi)\right)
\\[2mm]
&\qquad\qquad\Rightarrow
\epsilon_{5}^{2}
=
\mathrm{N}_{M/ \mathbb{Q}(\sqrt{5})}\left(\mathrm{N}_{E_{i}/ M}(\xi)\right)
=
\mathrm{N}_{L_{i}/ \mathbb{Q}(\sqrt{5})}\left(\mathrm{N}_{E_{i}/ L_{i}}(\xi)\right)
\\[2mm]
&\qquad\qquad\Rightarrow
\epsilon_{5}^{6}
=
\mathrm{N}_{L_{i}/ \mathbb{Q}(\sqrt{5})}\left(\mathrm{N}_{E_{i}/ L_{i}}(\xi^{3})\right)
\\[2mm]
&\qquad\qquad\Rightarrow
\epsilon_{5}\cdot\mathrm{N}_{L_{i}/ \mathbb{Q}(\sqrt{5})}(\epsilon_{5})
=
\mathrm{N}_{L_{i}/ \mathbb{Q}(\sqrt{5})}\left(\mathrm{N}_{E_{i}/ L_{i}}(\xi^{3})\right),
\end{array}
$$
whence
\[
\epsilon_{5}
=
\mathrm{N}_{L_{i}/ \mathbb{Q}(\sqrt{5})}\left(\epsilon_{5}^{-1}\cdot\mathrm{N}_{E_{i}/ L_{i}}(\xi^{3})\right).
\]
Since the element \(\epsilon_{5}^{-1}\cdot\mathrm{N}_{E_{i}/ L_{i}}(\xi^{3})\) is a unit of \(L_{i}\),
we obtain the index \(u_{i}=1\).
On the other hand, the possible values of \(b_{i}\) and \(u_{i}\) are \(\lbrace 1,5\rbrace\),
and we can deduce that \(u_{i}=b_{i}\).
Finally, it follows from the equation \(a_{i}b_{i}=u_{i}\) that \(a_{i}=1\).

(2) The result follows immediately from the fact that \(\#\ker(j_{E_{i}/ M})=5\cdot b_{i}\).

(3) According to Theorem
\ref{thm:ClassNumbers},
we have two possible cases,
\[
u_{i}=1 \,\, \text{and}\,\,\# \ker j_{E_{i}/ M}=5,
\]
and
\[
u_{i}=5 \,\, \text{and}\,\,\# \ker j_{E_{i}/ M}=25.
\]
In both cases, the class number formula is given by
\(\mathrm{h}_5(E_i)=5\cdot\mathrm{h}_5(L_i)^{2}\).
\end{proof}

%
%
%
%

\begin{theorem}
\label{thm:NonAbelianImag}
Let 
\(M=\mathbb{Q}\left((\zeta_{5}-\zeta_{5}^{-1})\sqrt{d}\right)\) 
with 
\(d>0\) 
be an imaginary cyclic quartic field 
with \(5\)-class group
\(\mathrm{Cl}_{5}(M) \simeq C_{5} \times C_{5}\). 
If the second \(5\)-class group 
\(G:=\mathrm{G}_5^2(M)\) 
of 
\(M\) 
is non-abelian,
then the coclass \(\mathrm{cc}(G)\) of \(G\) is greater than or equal to \(2\),
\(\mathrm{cc}(G)\geq 2\).
\end{theorem}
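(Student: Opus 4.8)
The plan is to eliminate the only remaining alternative, namely coclass one. Since \(\mathrm{Cl}_5(M)\simeq(5,5)\) forces \(G/G^\prime\simeq(5,5)\) of rank two, a group \(G\) of coclass \(\mathrm{cc}(G)=1\) is necessarily of maximal class, i.e.\ \(\lvert G\rvert=5^n\) with nilpotency class \(n-1\); if in addition \(G\) is non-abelian, then \(n\ge 3\). So I would assume \(\mathrm{cc}(G)=1\) together with \(G\) non-abelian and derive a contradiction, leaving only \(\mathrm{cc}(G)\ge 2\). It is worth stressing that the bare \(\sigma\)-group-of-degree-four property, which \(G\) enjoys by Theorem \ref{thm:CycQuart}(i), does \emph{not} by itself exclude coclass one, as Theorem \ref{thm:Sigma5GroupsDegree4} explicitly permits \(\mathrm{cc}(G)=1\); the decisive extra ingredient must be arithmetic, supplied by the class-number relation of Proposition \ref{prp:UnitIndicesImag}.

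First I would set up the Galois dictionary between the six unramified cyclic quintic extensions \(E_i/M\) and the six maximal subgroups \(H_1,\ldots,H_6\) of \(G\), that is, the subgroups of index \(5\), all of which contain \(G^\prime\) because \(G/G^\prime\simeq(5,5)\). Writing \(H_i=\mathrm{Gal}(M_5^{(2)}/E_i)\), the maximal abelian subextension of \(M_5^{(2)}/E_i\) is fixed by \(H_i^\prime\), and since \((E_i)_5^{(1)}/M\) is an unramified metabelian \(5\)-extension it lies inside \(M_5^{(2)}\); hence by class field theory \(\mathrm{Cl}_5(E_i)\simeq H_i/H_i^\prime\) and \(\mathrm{h}_5(E_i)=(H_i:H_i^\prime)\).

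The group-theoretic core is to show that a maximal-class \(5\)-group \(G\) of order \(5^n\) with \(n\ge 3\) always admits a maximal subgroup \(H\) with \((H:H^\prime)=25\). For \(n=3\) every maximal subgroup is abelian of order \(25\). For \(n\ge 4\) I would invoke Blackburn's theory: the two-step centralizer \(G_1=C_G(\gamma_2(G)/\gamma_4(G))\) is distinguished, while for each of the \(p=5\) maximal subgroups \(H\ne G_1\) the uniserial action and the centrality of \(\gamma_2(G)/\gamma_3(G)\) modulo \(\gamma_3(G)\) give \(H^\prime=\gamma_3(G)\), so that \((H:H^\prime)=5^{n-1}/5^{n-3}=25\). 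In either case at least one index \(i\) satisfies \(\mathrm{h}_5(E_i)=25\), whence the \(5\)-adic valuation \(v_5\bigl(\mathrm{h}_5(E_i)\bigr)=2\) is even.

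Finally I would invoke Proposition \ref{prp:UnitIndicesImag}(3), which holds for every imaginary \(M\) with \(\mathrm{Cl}_5(M)\simeq(5,5)\) and gives \(\mathrm{h}_5(E_i)=5\cdot\mathrm{h}_5(L_i)^2\); consequently \(v_5\bigl(\mathrm{h}_5(E_i)\bigr)=1+2\,v_5\bigl(\mathrm{h}_5(L_i)\bigr)\) is \emph{odd} for every \(i\). This contradicts the even value \(2\) produced above, which rules out the non-abelian coclass-one case and forces \(\mathrm{cc}(G)\ge 2\). The main obstacle I anticipate is exactly the group-theoretic step: making the identification \(H^\prime=\gamma_3(G)\) for the non-distinguished maximal subgroups watertight for all \(n\ge 4\), and treating the degenerate case \(n=3\) separately, for which the two-step-centralizer and uniserial machinery of maximal-class \(p\)-groups is precisely the tool required.
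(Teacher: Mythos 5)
Your proof is correct, and its arithmetic core is exactly the paper's: both arguments derive a contradiction between an even valuation \(v_5(\mathrm{h}_5(E_i))=2\) forced by coclass one and the odd valuation \(1+2\,v_5(\mathrm{h}_5(L_i))\) forced by Proposition \ref{prp:UnitIndicesImag}(3). The difference is how the even side is produced. The paper first invokes the classification of possible transfer kernel types of a non-abelian coclass-one group, \(\varkappa(G)=(111111)\) or \(\varkappa(G)=(\ell 00000)\), and treats the cases separately: in the first, \(G\) is the extraspecial group of order \(5^3\) and exponent \(25\), whose maximal subgroups are abelian of order \(25\); in the others, it asserts \(\mathrm{h}_5(E_i)=5^2\) for the five extensions with total capitulation (the unit-index remarks about \(b_i\) and \(u_i\) there are not actually needed for the final contradiction). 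You bypass the capitulation types altogether and prove the required group-theoretic fact directly: for \(\lvert G\rvert=5^3\) every maximal subgroup is abelian of order \(25\), while for \(\lvert G\rvert=5^n\), \(n\ge 4\), every maximal subgroup \(H\) other than the two-step centralizer \(G_1=C_G(\gamma_2 G/\gamma_4 G)\) satisfies \(H^\prime=\gamma_3 G\), hence \((H:H^\prime)=25\). Your sketch of that identification does close up: \(H^\prime\subseteq\gamma_3 G\) since \(\gamma_2 G/\gamma_3 G\) is central in \(G/\gamma_3 G\); \(H^\prime\gamma_4 G=\gamma_3 G\) since a generator \(s\) of \(H\) modulo \(\gamma_2 G\) lies outside \(G_1\); and \(H^\prime\), being normal in \(G\) and contained in \(\gamma_2 G\), must be a term of the (uniserial) lower central series, forcing \(H^\prime=\gamma_3 G\). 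The net effect is that your argument is more self-contained -- the paper's coclass-one capitulation classification is quoted without proof -- at the cost of redoing a piece of Blackburn's structure theory by hand; both versions rely on the same class-field-theoretic dictionary \(\mathrm{Cl}_5(E_i)\simeq H_i/H_i^\prime\), and you are also right that Theorems \ref{thm:CycQuart} and \ref{thm:Sigma5GroupsDegree4} alone cannot exclude coclass one, so the arithmetic input from Proposition \ref{prp:UnitIndicesImag} is indispensable.
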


\begin{proof}
Assume that \(G\) is non-abelian of coclass \(cc(G)=1\).  Then the
possible capitulation types of \(M\) in the six intermediary cyclic
quintic extensions of \({M}_{5}^{(2)}/ {M}\), noted by
$$
{E}_{1},\,{E}_{2},\,{E}_{3},\,{E}_{4}={E}_{3}^\varphi,\,{E}_{5},\,{E}_{6}={E}_{5}^\varphi,
$$
are given by \(\varkappa(G)=(111111)\) or \(\varkappa(G)=(\ell 00000),\,\, \ell\in\{0,1,2\}\).

First we consider the type \(\varkappa(G)=(111111)\). In this case,
the group \(G\) is the extra special \(5\)-group of order \(5^{3}\) and exponent \(5^2\),
 whose maximal normal subgroups are of order \(5^{2}\). This implies that the
\(5\)-class number of each  \({E}_{i}\) is equal to \(5^{2}\). Using
Proposition
 \ref{prp:UnitIndicesImag}, however,
  we conclude that the valuation
  \(v_{5}(h_{5}({E}_{i}))\)
of the \(5\)-class number of \({E}_{i}\) must be
odd, which is a contradiction. Thus  the type
\(\varkappa(G)=(111111)\) cannot occur.

For the three other types, we have total capitulation in the five
extensions
$$
{E}_{2},\,{E}_{3},\,{E}_{4}={E}_{3}^\varphi,\,{E}_{5},\,{E}_{6}={E}_{5}^\varphi,
$$
so the value of the index \(b_{i}\), \(2\le i\le 6\),
is \(b_{i}=5\), whence \(u_{i}=5\). On the other hand, for
\(2\le i\le 6\) we again have \(\mathrm{h}_{5}({E}_{i})=5^{2}\), which is a contradiction,
since by Proposition
\ref{prp:UnitIndicesImag},
the valuation
\(v_{5}(h_{5}({E}_{i}))\)
must be odd.
\end{proof}

%
%
%
%

\begin{proposition}[{\sf Number of fields}]
\label{prp:CountImag}
In the range \(0<d<10000\) of fundamental discriminants \(d\)
of real quadratic fields \({k}_1=\mathbb{Q}(\sqrt{d})\) with \(\gcd(5,d)=1\),
there exist precisely \(\mathbf{83}\) cases such that
the \(5\)-dual field \({M}=\mathbb{Q}((\zeta_5-\zeta_5^{-1})\sqrt{d})\) of \({k}_1\)
has a \(5\)-class group \(\mathrm{Cl}_5(M)\) of type \((5,5)\).
\end{proposition}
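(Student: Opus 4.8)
The plan is to establish this count purely by explicit computation, since Proposition~\ref{prp:CountImag} is an assertion about a finite, concrete enumeration rather than a structural theorem. First I would observe that for each squarefree $d$ with $0<d<10000$ and $\gcd(5,d)=1$, the field $M=\mathbb{Q}((\zeta_5-\zeta_5^{-1})\sqrt{d})$ is generated, by Corollary~\ref{cor:MinPol}, as a root of the explicit minimal polynomial $\mathrm{Irr}_\mathbb{Q}(\alpha)=X^4+5dX^2+5d^2$ from \eqref{eqn:MinPol}. This gives a completely effective way to hand each candidate $M$ to a computer algebra system: one constructs $M$ from this quartic, computes its class group $\mathrm{Cl}(M)$, extracts the $5$-part $\mathrm{Cl}_5(M)$, and tests whether it is isomorphic to $(5,5)$, i.e.\ elementary abelian of rank two. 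I would run this loop in Magma \cite{MAGMA} over the full range.

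The key steps, in order, are as follows. First I would enumerate the fundamental discriminants $d$ in the stated range; note that the relevant arithmetic invariant is $d$ as a quadratic fundamental discriminant, and the constraint $\gcd(5,d)=1$ excludes exactly the discriminants divisible by $5$. Second, for each such $d$ I would form the quartic field $M$ via \eqref{eqn:MinPol} and compute its $5$-class group. Third, I would filter for the single condition $\mathrm{Cl}_5(M)\simeq(5,5)$ and maintain a running count. The conductor and discriminant formulas in Remark~\ref{rmk:CondAndDisc} serve as an independent consistency check that each constructed $M$ is the intended $5$-dual field rather than some conjugate or spurious quartic. Summing over the range then yields the asserted value $83$.

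The main obstacle I expect is not conceptual but computational reliability: for the larger values of $d$ near $10^4$ the discriminant $d(M)=2000d^2$ (or $125d^2$) grows, and the class group computation in Magma may invoke subexponential methods whose correctness is conditional on the Generalized Riemann Hypothesis unless one forces an unconditional class-group certification. The honest way to handle this is either to run the computation with GRH-conditional bounds and flag that dependence, or to demand unconditional verification at the cost of substantially longer runtimes; in practice for quartic fields of this size the latter is feasible. A secondary point of care is the distinction between the full class group and its $5$-primary component, and ensuring that ``type $(5,5)$'' is tested as the precise elementary-bicyclic structure (rank exactly $2$ with exponent $5$) rather than merely $5$-rank at least $2$. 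Once these points are settled, the proof reduces to reporting that the filtered count over the enumerated range equals $\mathbf{83}$, in agreement with the numerical tables of \S\ref{s:Tables}.
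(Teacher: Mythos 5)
Your proposal is correct and takes essentially the same route as the paper: the paper's proof consists precisely of this computational enumeration, recorded in Tables~\ref{tbl:CycQrtFld1} and \ref{tbl:CycQrtFld2}, which list the $37$ cases with $0<d<5000$ and the $46$ cases with $5000<d<10000$, totalling $83$. Your added caveats (unconditional class-group certification versus GRH-conditional bounds, and testing the exact elementary bicyclic type $(5,5)$ rather than merely $5$-rank $\ge 2$) are sound implementation details but do not alter the method.
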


\begin{proof}
See Tables
\ref{tbl:CycQrtFld1}
and
\ref{tbl:CycQrtFld2}.
\end{proof}

%
%
%
%

\begin{theorem}[{\sf Two-stage towers of \(5\)-class fields with Schur \(\sigma\)-groups}]
\label{thm:SchurSigmaImag}
\mbox{}
 
 $(1)$
If the \(5\)-dual field \(M\) of \(k_1\) has
\(5\)-principalization type \(\varkappa(M)=(125643)\) with two fixed points and a \(4\)-cycle,
then the abelian type invariants of \(E_1,\ldots,E_6\) are \(\tau(M)=\lbrack (1^3)^2,(21)^4\rbrack\),
and the \(5\)-class tower group is the Schur \(\sigma\)-group
\(\mathrm{G}_5^{(\infty)}{M}=\mathrm{G}_5^{(2)}{M}\simeq\langle 5^5,\mathbf{11}\rangle\). 

$(2)$
If the \(5\)-dual field \(M\) of \(k_1\) has
\(5\)-principalization type \(\varkappa(M)=(123456)\), the identity permutation,
then the abelian type invariants of \(E_1,\ldots,E_6\) are \(\tau(M)=\lbrack (1^3)^6\rbrack\),
and the \(5\)-class tower group is the Schur \(\sigma\)-group
\(\mathrm{G}_5^{(\infty)}{M}=\mathrm{G}_5^{(2)}{M}\simeq\langle 5^5,\mathbf{14}\rangle\). \\
 
\end{theorem}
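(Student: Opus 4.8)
The plan is to identify the Galois group $G := \mathrm{G}_5^{(2)}{M}$ through its Artin pattern, using the $\sigma$-group restrictions of Section~\ref{s:GaloisAction} to cut down the list of candidates. First I would invoke Theorem~\ref{thm:CycQuart}: since $M$ is cyclic quartic, $G$ is a $\sigma$-group of degree $4$, and because its quadratic subfield $k_0^+=\mathbb{Q}(\sqrt{5})$ has trivial $5$-class group, $G$ is simultaneously a $\sigma$-group of degree $2$. In both cases (1) and (2) the prescribed $\varkappa(M)$ differs from the constant type $(000000)$, so by Theorem~\ref{thm:ClassNumbersImag} the group $G$ is not the abelian group $\langle 25,2\rangle$; being non-abelian, it satisfies $\mathrm{cc}(G)\ge 2$ by Theorem~\ref{thm:NonAbelianImag}. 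Theorem~\ref{thm:Sigma5GroupsDegree4} then confines $G$ to the two Schur $\sigma$-groups $\langle 3125,11\rangle$, $\langle 3125,14\rangle$ or to a descendant of one of the capable stem groups $\langle 3125,i\rangle$ with $i\in\{3,4,5,6,7\}$.

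Next I would pin down $G$ by its transfer kernel type. Reading off Table~\ref{tbl:TrfKerStem5Icl6}, the identity permutation $\varkappa(M)=(123456)$ occurs only for $\langle 3125,14\rangle$, while the type consisting of two fixed points and a $4$-cycle occurs only for $\langle 3125,11\rangle$; both carry the flag $f=1$, in accordance with the simultaneous $\sigma$-group property of degrees $4$ and $2$. The five capable stem groups carry the incompatible types $(000000)$, $(011111)$, $(022222)$ and $(126543)$, so none of them, and hence none of their descendants either, can reproduce the given $\varkappa(M)$. With $G$ matched to one of the two Schur $\sigma$-groups, the abelian type invariants $\tau(M)$ are read directly from the abelianized maximal subgroups: for $\langle 3125,14\rangle$ all six abelianize to $(5,5,5)$, giving $\tau(M)=\lbrack(1^3)^6\rbrack$, while for $\langle 3125,11\rangle$ two abelianize to $(5,5,5)$ and four to $(25,5)$, giving $\tau(M)=\lbrack(1^3)^2,(21)^4\rbrack$. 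As a consistency check, each abelianized maximal subgroup of both candidate groups has order $5^3$, so $\mathrm{h}_5(E_i)=5^3$; combined with the relation $\mathrm{h}_5(E_i)=5\cdot\mathrm{h}_5(L_i)^2$ of Proposition~\ref{prp:UnitIndicesImag}, this yields $\mathrm{h}_5(L_i)=5$ and confirms the internal coherence of the class number data.

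It remains to conclude that the tower terminates at the metabelian stage. Because $\langle 3125,11\rangle$ and $\langle 3125,14\rangle$ are Schur $\sigma$-groups, they admit a balanced presentation in which the relation rank equals the generator rank $d(G)=2$, which is precisely the condition for a finite $5$-group to be realizable as the full pro-$5$ class tower group $\mathrm{G}_5^{(\infty)}{M}$. Since the metabelianization $\mathrm{G}_5^{(2)}{M}$ already coincides with such a group, the tower cannot proceed further, whence $\mathrm{G}_5^{(\infty)}{M}=\mathrm{G}_5^{(2)}{M}=G$ and $\ell_5{M}=2$.

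The step I expect to be the main obstacle is the rigorous exclusion of the proper descendants of the capable stem groups: the Artin pattern propagates only in a graded fashion down the descendant tree, so one must ensure that no group of order exceeding $5^5$ in those trees carries the identity or the $4$-cycle transfer kernel type together with the prescribed $\tau(M)$. The Schur $\sigma$-group property is what makes this tractable, since it renders $\langle 3125,11\rangle$ and $\langle 3125,14\rangle$ terminal vertices that admit no capable ancestor realizing their pattern; nevertheless, the definitive verification that $(\varkappa,\tau)$ is attained by no other $\sigma$-group of degree $4$ in the relevant trees rests on the Magma search underlying Theorem~\ref{thm:Sigma5GroupsDegree4}.
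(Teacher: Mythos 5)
Your overall strategy---Galois-action restrictions to produce a candidate list, a table lookup of \(\varkappa\), and the Schur \(\sigma\)-property to terminate the tower at stage two---is exactly the paper's framework; in fact the paper's own proof consists solely of your last step, the identification of \(G=\mathrm{G}_5^{(2)}M\) being delegated implicitly to \S\ref{s:GaloisAction} and to the MAGMA verifications reported in \S\ref{ss:ConfirmedGaloisAction}. The genuine gap is the one you flag yourself, and your proposed repair does not close it. Kernel monotonicity along descendant trees (under the canonical identification of abelianizations, \(\ker T_{G,H_i}\subseteq\ker T_{\pi G,\pi H_i}\), cf.~\cite{Ma9}) does handle four of the five capable stem groups: for \(\langle 3125,4\rangle\), \(\langle 3125,5\rangle\), \(\langle 3125,6\rangle\) a permutation type in a descendant would force five pairwise distinct subgroups of order \(5\) to lie inside a single subgroup of order \(5\), and for \(\langle 3125,7\rangle\) it would freeze the type at two \(2\)-cycles; in no case can the identity or a \(4\)-cycle with two fixed points arise. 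But \(\langle 3125,3\rangle\) has type \((000000)\): all six kernels are \emph{total}, so the containment relation imposes no condition whatsoever, and a descendant of \(\langle 3125,3\rangle\) could a priori carry either prescribed type. The Magma search behind Theorem~\ref{thm:Sigma5GroupsDegree4} cannot fill this hole, because that computation only decides which groups of order \(5^5\) are \(\sigma\)-groups of degree \(4\); it says nothing about the Artin patterns of the (infinitely many) groups in the tree rooted at \(\langle 3125,3\rangle\). What is actually needed is the sharper classification fact, resting on \cite{Ma4,Ma15} and the computations of \S\ref{ss:ConfirmedGaloisAction}: a finite \(5\)-group with abelianization \((5,5)\) whose transfer kernel type is a permutation of the six maximal subgroups must lie in the stem of \(\Phi_6\), hence have order exactly \(5^5\). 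Without that input, the hypothesis on \(\varkappa(M)\) does not yet determine \(G\), so neither \(\tau(M)\) nor the isomorphisms \(G\simeq\langle 5^5,11\rangle\), \(\langle 5^5,14\rangle\) are established.

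There is also a local error in your final step. You assert that a balanced presentation \(d_2(G)=d_1(G)=2\) \lq\lq is precisely the condition for a finite \(5\)-group to be realizable as the full pro-\(5\) class tower group.\rq\rq\ For the fields at hand this is false as a characterization: \(M\) is imaginary cyclic quartic with torsion-free unit rank \(1\), so the Shafarevich bound only forces \(d_2\le d_1+1=3\) for the tower group, and Theorem~\ref{thm:UnusualSigmaImag} exhibits tower groups \(\langle 5^5,4\rangle\) and \(\langle 5^5,7\rangle\) with relation rank \(3\). The correct mechanism---and what the paper's one-line proof is invoking---runs the other way: the tower group surjects onto \(G\) with kernel its second derived subgroup, and because a Schur \(\sigma\)-group has trivial cover in the sense of \cite[Def.\,5.1]{Ma10}, no admissible \(\sigma\)-group other than \(G\) itself has metabelianization \(G\); hence the surjection is an isomorphism and \(\ell_5 M=2\). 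Your argument reaches the right conclusion, but via a criterion that the paper's own Theorem~\ref{thm:UnusualSigmaImag} contradicts.
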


\begin{proof}
In each case, the length of the \(5\)-class tower of \(M\) is given by \(\ell_5(M)=2\),
since \(G:=\mathrm{G}_5^{(2)}{M}\) is a Schur \(\sigma\)-group with balanced presentation,
i.e., relation rank \(d_2(G)=d_1(G)=\varrho_5(M)=2\).
\end{proof}

Examples for part  (1) are the  \(\mathbf{23}\) (about \(\mathbf{28}\%\)) real quadratic fields \(k_1\) starting with the following discriminants:\\
\centerline{$d\in\lbrace 457,\ 501,\ 1996,\ 2573,\ 3253,\ 4189,\ 4957,\ 5129,\ 5233,\ 5308,\ 5361,\ \ldots\rbrace.$}

Examples for  part (2) are the  \(\mathbf{11}\) (about \(\mathbf{13}\%\)) real quadratic fields \(k_1\) with the following discriminants:\\
\centerline{$d\in\lbrace 581,\ 753,\ 2296,\ 2829,\ 4553,\ 5116,\ 5736,\ 6761,\ 7489,\ 9013,\ 9829\rbrace,$}
verifying a conjecture by O. Taussky in
\cite{Ts},
and announced in
\cite[\S 3.5.2, p.\,448]{Ma4},
except \(2829\).

%
%
%
%

\begin{remark}
\label{rmk:SchurSigma}
The pairs of conjugate non-Galois extensions \(E_3\simeq E_4\) and \(E_5\simeq E_6\) of \(M\)
are not adjacent in the factor \((3546)\) of the cycle pattern \((1)(2)(3546)\)
of the \(4\)-cycle \(\varkappa(M)=(125643)\),
and the Frobenius extensions \(E_1,E_2\) correspond to the fixed points \((1),(2)\).
The identity \(\varkappa(M)=(123456)\), which does not have two distinguished fixed points a priori,
is endowed with a random arithmetical bipolarization by the two Frobenius extensions \(E_1,E_2\).
\end{remark}

%
%
%
%
%
%
%
%
%
%
%
%
%
%
%
%
%
%
%

\bigskip
\noindent
Figure
\ref{fig:5MirrSbfldLatt}
visualizes the situation of a two-stage \(5\)-class tower in the Theorems
\ref{thm:SchurSigmaImag},
\ref{thm:UnusualSigmaImag},
\ref{thm:SchurSigmaReal}.

\begin{figure}[ht]
\caption{{\sf The \(5\)-class tower \({M}_5^{(\infty)}\) of 
 \({M}=\mathbb{Q}\left((\zeta_5-\zeta_5^{-1})\sqrt{d}\right)\) when   \(\#\mathrm{G}_5^{(2)}{{M}}=5^5\)}}
\label{fig:5MirrSbfldLatt}



\setlength{\unitlength}{1cm}
\begin{picture}(14,14)(-8,0)


\put(-8,13.3){\makebox(0,0)[cb]{Degree}}
\put(-8,12){\vector(0,1){1}}
\put(-8,12){\line(0,-1){12}}
\put(-8.1,12){\line(1,0){0.2}}
\put(-8.2,12){\makebox(0,0)[rc]{\(12\,500\)}}
\put(-8.1,10){\line(1,0){0.2}}
\put(-8.2,10){\makebox(0,0)[rc]{\(100\)}}
\put(-8.1,7){\line(1,0){0.2}}
\put(-8.2,7){\makebox(0,0)[rc]{\(20\)}}
\put(-8.1,6){\line(1,0){0.2}}
\put(-8.2,6){\makebox(0,0)[rc]{\(8\)}}
\put(-8.1,4){\line(1,0){0.2}}
\put(-8.2,4){\makebox(0,0)[rc]{\(4\)}}
\put(-8.1,2){\line(1,0){0.2}}
\put(-8.2,2){\makebox(0,0)[rc]{\(2\)}}
\put(-8.1,0){\line(1,0){0.2}}
\put(-8.2,0){\makebox(0,0)[rc]{\(1\)}}

\put(-3,0){\circle*{0.2}}
\multiput(-6,2)(3,0){2}{\circle*{0.2}}

\put(0,2){\circle*{0.2}}
\multiput(-3,4)(3,0){3}{\circle*{0.2}}
\put(0,6){\circle*{0.2}}

\multiput(-5,7)(10,0){2}{\circle{0.2}}
\multiput(-3,7)(2,0){4}{\circle{0.1}}
\put(0,10){\circle{0.2}}
\put(0,12){\circle{0.2}}

\multiput(0,10)(-5,-3){2}{\line(5,-3){5}}
\multiput(0,10)(5,-3){2}{\line(-5,-3){5}}

\put(-3,0){\line(-3,2){3}}
\multiput(-3,0)(0,2){2}{\line(0,1){2}}
\multiput(-3,0)(-3,2){2}{\line(3,2){3}}

\multiput(0,2)(3,2){2}{\line(-3,2){3}}
\multiput(0,2)(0,2){2}{\line(0,1){2}}
\multiput(0,2)(-3,2){2}{\line(3,2){3}}

\multiput(0,4)(-1,3){2}{\line(1,3){1}}
\multiput(0,4)(1,3){2}{\line(-1,3){1}}
\multiput(0,4)(-3,3){2}{\line(1,1){3}}
\multiput(0,4)(3,3){2}{\line(-1,1){3}}
\multiput(0,4)(-5,3){2}{\line(5,3){5}}
\multiput(0,4)(5,3){2}{\line(-5,3){5}}

\put(0,10){\line(0,1){2}}

\put(-3,-0.2){\makebox(0,0)[ct]{\(\mathbb{Q}\)}}
\put(-5.7,2){\makebox(0,0)[lc]{\(k_1=\mathbb{Q}\left(\sqrt{d}\right)\)}}
\put(-2.8,2){\makebox(0,0)[lc]{\(k_2=\mathbb{Q}\left(\sqrt{5d}\right)\)}}
\put(0.2,2){\makebox(0,0)[lc]{\(\mathbb{Q}\left(\sqrt{5}\right)=k_0^+\)}}
\put(0,3.9){\makebox(0,0)[ct]{\(M=\mathbb{Q}\left((\zeta_5-\zeta_5^{-1})\sqrt{d}\right)\)}}
\put(3.2,4){\makebox(0,0)[lc]{\(\mathbb{Q}\left(\zeta_5\right)=k_0\)}}

\put(-5.2,7){\makebox(0,0)[rc]{\(E_1\)}}
\put(-3.2,7){\makebox(0,0)[rc]{\(E_3\)}}
\put(-1.2,7){\makebox(0,0)[rc]{\(\simeq\quad E_4\)}}
\put(1.2,7){\makebox(0,0)[lc]{\(E_5\quad\simeq\)}}
\put(3.2,7){\makebox(0,0)[lc]{\(E_6\)}}
\put(5.2,7){\makebox(0,0)[lc]{\(E_2\)}}

\put(-0.2,10.2){\makebox(0,0)[rc]{\(M_5^{(1)}\)}}
\put(-0.2,12.4){\makebox(0,0)[rc]{\(M_5^{(2)}\)}}
\put(0,12.4){\makebox(0,0)[lc]{\(=M_5^{(\infty)}\)}}
\end{picture}
`\mbox{} \vskip 1cm 
\end{figure}
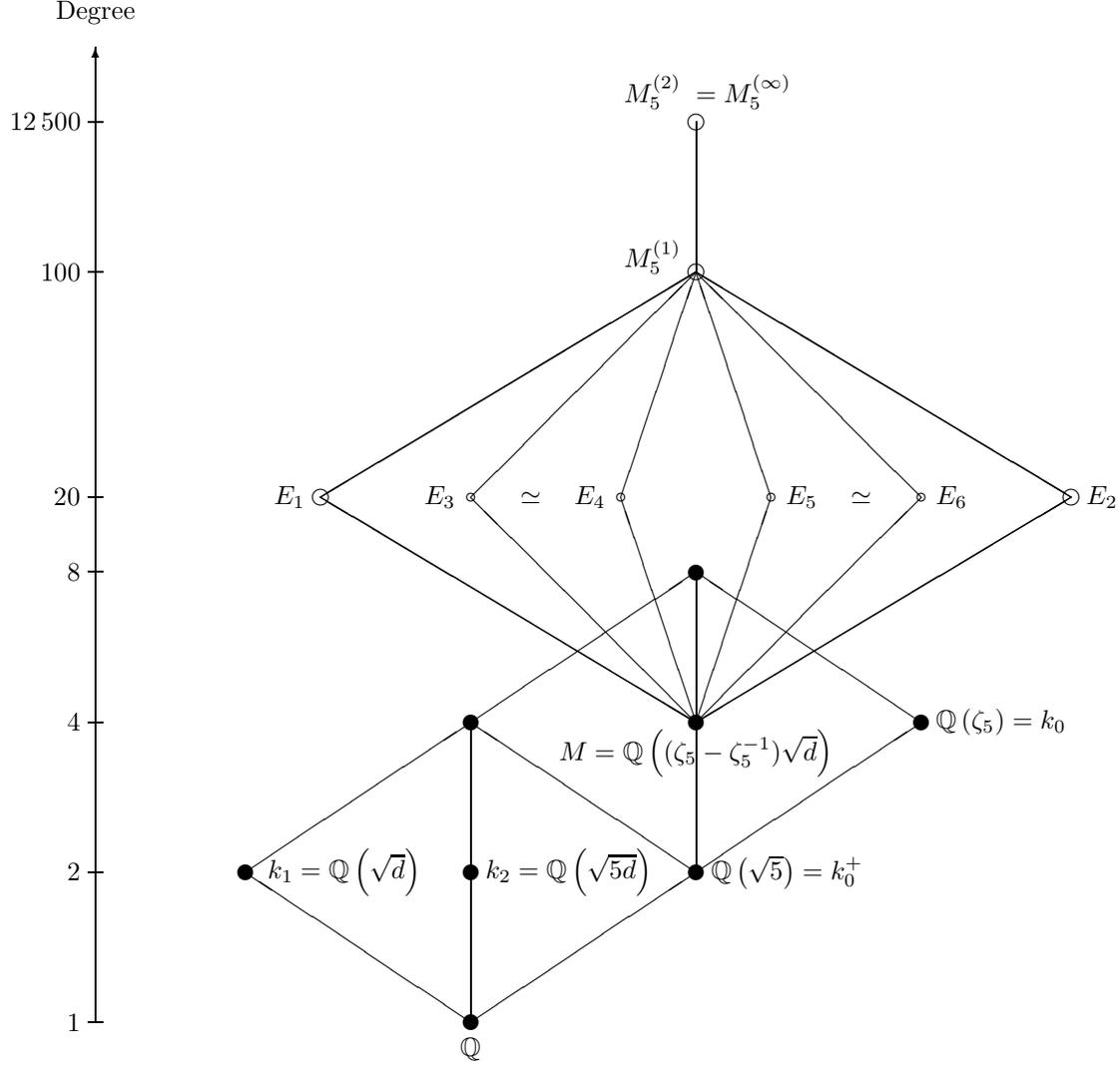 \vskip 0cm

%
%
%
%
%
%
%
%
%
%
%
%
%
%
%
%
%
%
%

\begin{theorem}[{\sf Two-stage towers of \(5\)-class fields with unusual capable weak \(\sigma\)-groups}]
\label{thm:UnusualSigmaImag}
  \mbox{}

$(1)$
If the \(5\)-dual field \(M\) of \(k_1\) has
\(5\)-principalization type \(\varkappa(M)=(022222)\), nearly constant with a single total capitulation and a single fixed point,
then the abelian type invariants of \(E_1,\ldots,E_6\) are \(\tau(M)=\lbrack (1^3)^2,(21)^4\rbrack\),
and the \(5\)-tower group is
\(\mathrm{G}_5^{(\infty)}{M}=\mathrm{G}_5^{(2)}{M}\simeq\langle 5^5,\mathbf{4}\rangle\).

$(2)$
If the \(5\)-dual field \(M\) of \(k_1\) has
\(5\)-principalization type \(\varkappa(M)=(124365)\) with two fixed points and two disjoint \(2\)-cycles,
then the abelian type invariants of \(E_1,\ldots,E_6\) are \(\tau(M)=\lbrack (1^3)^2,(21)^4\rbrack\),
and the \(5\)-class tower group is
\(\mathrm{G}_5^{(\infty)}{M}=\mathrm{G}_5^{(2)}{M}\simeq\langle 5^5,\mathbf{7}\rangle\). 
\end{theorem}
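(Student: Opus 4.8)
The plan is to identify the full tower group $\mathrm{G}_5^{(\infty)}{M}$ at once and then to observe that it is already metabelian. By Theorem~\ref{thm:CycQuart} this group is simultaneously a $\sigma$-group of degree $4$ and of degree $2$ (the latter since $\mathrm{Cl}_5(k_0^+)=1$), its abelianization is $\mathrm{Cl}_5(M)\simeq(5,5)$, and by Theorem~\ref{thm:NonAbelianImag} its coclass is at least $2$. Theorem~\ref{thm:Sigma5GroupsDegree4} therefore confines it to the stem of $\Phi_6$ together with the descendants of the capable stem groups. Within this list the prescribed $5$-principalization type singles out a unique stem candidate: in part~(1) the type $\varkappa(M)=(022222)$ occurs only for $\langle 5^5,4\rangle=\Phi_6(21^3)_a$, and in part~(2) the cycle type of $\varkappa(M)=(124365)$, namely two fixed points and two disjoint $2$-cycles, matches---after a relabelling of the $E_i$---the group $\langle 5^5,7\rangle=\Phi_6(2^21)_{b_2}$ in Table~\ref{tbl:TrfKerStem5Icl6}.

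The hard part will be to exclude the possibility that the tower group is a \emph{proper} descendant of this stem group, for these stem groups are capable and are not Schur $\sigma$-groups, so the balanced-presentation argument of Theorem~\ref{thm:SchurSigmaImag} is unavailable. My plan is to invoke the Shafarevich relation-rank inequality: since $M$ is a CM-field with maximal real subfield $k_0^+=\mathbb{Q}(\sqrt{5})$, torsion-free unit rank $r(M)=1$, and $\zeta_5\notin M$, the tower group satisfies $d_2\bigl(\mathrm{G}_5^{(\infty)}{M}\bigr)\le d_1+r(M)=2+1=3$. The decisive and most delicate step, to be carried out with Magma and the SmallGroups library, is to verify that $\langle 5^5,4\rangle$ and $\langle 5^5,7\rangle$ have relation rank $d_2=3$, exactly attaining this bound, whereas every proper descendant that is still a $\sigma$-group of degree $4$ has $d_2\ge 4$. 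The bound then forbids any non-metabelian cover, so the tower group coincides with the stem group; being metabelian, it equals its own metabelianization, giving $\mathrm{G}_5^{(\infty)}{M}=\mathrm{G}_5^{(2)}{M}\simeq\langle 5^5,4\rangle$ in part~(1) and $\simeq\langle 5^5,7\rangle$ in part~(2), and hence $\ell_5(M)=2$.

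It remains to read off and confirm the abelian type invariants. From the identified group one obtains $\tau(M)=\lbrack(1^3)^2,(21)^4\rbrack$, so $\mathrm{h}_5(E_i)=5^3$ for every $i$, the specific split between $(1^3)$ and $(21)$ being a transfer-target invariant of the group rather than of the arithmetic. This is consistent with Proposition~\ref{prp:UnitIndicesImag}, which gives $\mathrm{h}_5(E_i)=5\cdot\mathrm{h}_5(L_i)^2$ and hence $v_5(\mathrm{h}_5(E_i))$ odd; indeed $5^3=5\cdot\mathrm{h}_5(L_i)^2$ forces $\mathrm{h}_5(L_i)=5$, i.e.\ $\mathrm{Cl}_5(L_i)\simeq C_5$, in agreement with the odd-valuation constraint. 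This closes the argument for both parts.
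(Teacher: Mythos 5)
Your reduction to the stem of \(\Phi_6\) (via Theorems \ref{thm:CycQuart}, \ref{thm:NonAbelianImag}, \ref{thm:Sigma5GroupsDegree4} and Table \ref{tbl:TrfKerStem5Icl6}) and your Shafarevich computation (signature \((0,2)\), unit rank \(r=1\), \(\zeta_5\notin M\), hence \(d_2\le d_1+r=3\)) agree with the paper. The gap lies in your \emph{decisive and most delicate step}. The paper does not exclude longer towers by inspecting descendants at all: it invokes the fact that \(\langle 5^5,4\rangle\) and \(\langle 5^5,7\rangle\) are metabelian \(\sigma\)-groups with \emph{trivial cover} in the sense of \cite[Def.\,5.1]{Ma10}, due to Heider and Schmithals \cite{HeSm} --- that is, no group \(H\not\simeq G\) whatsoever has \(H/H''\simeq G\). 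Since \(\mathrm{G}_5^{(\infty)}M\) has metabelianization \(\mathrm{G}_5^{(2)}M\simeq G\), it must equal \(G\); Shafarevich enters the paper's proof only as a consistency check that \(d_2=3\) is admissible for \(M\) (while forbidden over quadratic fields, whence \lq\lq unusual\rq\rq). Your substitute --- a Magma verification that \emph{every} proper descendant of the stem group which is a \(\sigma\)-group of degree \(4\) has \(d_2\ge 4\) --- is not a finite computation: the coclass-\(2\) tree \(\mathcal{T}^2(\langle 5^5,4\rangle)\) is infinite, and it provably contains \(\sigma\)-degree-\(4\) vertices well beyond the stem, e.g.\ \(\langle 5^6,564\rangle\) and its children \(\langle 5^7,885\rangle\), \(\langle 5^7,891\vert 894\vert 897\rangle\), which the paper realizes as second \(5\)-class groups for \(d=1137,\ 4709\) (see Figure \ref{fig:5MirrMinDisc}), with order-\(5^9\) analogues in the parallel tree of \(\langle 5^5,3\rangle\). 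Corollary \ref{cor:ParentOperator} lets you prune branches where the \(\sigma\)-property dies, but there is no inheritance or monotonicity theorem for the relation rank along descendant edges, so no finite SmallGroups/Magma search certifies \(d_2\ge 4\) on the whole infinite subtree; moreover the claim is stronger than the theorem needs, and nothing rules out a deep descendant of small relation rank whose metabelianization is no longer the stem group and which is therefore harmless.

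The repair is to shrink the set you must control from all descendants to the cover of \(G\): the only candidates for \(\mathrm{G}_5^{(\infty)}M\) are groups \(H\) with \(H/H''\simeq G\), and since \(\mathrm{cl}(G)=3\) any such \(H\) satisfies \(H''=\gamma_4(H)\), so the cover sits inside the descendant tree in a very restricted way. For that set your relation-rank argument is sound in spirit --- indeed it is how covers are determined in \cite{Ma10} --- but proving that every such \(H\neq G\) has \(d_2\ge 4\) (in fact, that no such \(H\) exists) is precisely the Heider--Schmithals/Mayer theorem the paper cites, which you would have to quote or reprove rather than delegate to an open-ended enumeration. A smaller point: you apply Theorem \ref{thm:Sigma5GroupsDegree4}, a statement about \emph{finite} \(5\)-groups, to \(\mathrm{G}_5^{(\infty)}M\), whose finiteness is part of what is being proven; the cover formulation avoids this circularity. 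Your closing consistency check of \(\tau(M)\) against Proposition \ref{prp:UnitIndicesImag} is correct and is a nice addition not made explicit in the paper.
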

\begin{proof}
In each case, the length of the \(5\)-class tower of \(M\) is given by \(\ell_5(M)=2\),
since \(G:=\mathrm{G}_5^{(2)}{M}\) is a metabelian \(\sigma\)-group with trivial cover
\cite[Def.\,5.1, p.\,30]{Ma10},
according to Heider and Schmithals
\cite[p.\,20]{HeSm}.
The presentation of \(G\) is not balanced, since the relation rank \(d_2(G)=3\) is too big.
However, the Shafarevich Theorem
\cite{Sh},
in its corrected version
\cite[Th.\,5.1, p.°,28]{Ma10},
ensures that
\(d_2(G)\le d_1(G)+r=3\),
(it just reaches the admissible upper bound),
since the generator rank of \(G\) and the torsion-free Dirichlet unit rank of \(M\) with signature \((0,2)\)
are given by \(d_1(G)=\varrho_5(M)=2\) and \(r=0+2-1=1\).
The \(5\)-tower groups \(\langle 5^5,\mathbf{4}\rangle\) and \(\langle 5^5,\mathbf{7}\rangle\) are unusual,
because they are not strong \(\sigma\)-groups
and thus are forbidden for (imaginary and real) quadratic base fields
\cite{Sf}.
\end{proof}

Examples for part (1 ) are the \(\mathbf{22}\)  (about \(\mathbf{ 27}\%\)) real quadratic fields \(k_1\) starting with the following discriminants: \\
\centerline{\(d\in\lbrace 257,\ 764,\ 1708,\ 1853,\ 2008,\ 2189,\ 3129,\ 4504,\ 4861,\ 5241,\ 5269,\ \ldots\rbrace\).}

Examples for part (2) are  the  \(\mathbf{16}\) (about \(\mathbf{19}\%\)) real quadratic fields \(k_1\) starting with the following discriminants: \\
\centerline{\(d\in\lbrace 508,\ 509,\ 629,\ 881,\ 1113,\ 1192,\ 1704,\ 1829,\ 3121,\ 4461,\ 7032,\ \ldots\rbrace\).}

%
%
%
%

\begin{remark}
\label{rmk:UnusualSigma}
The pairs of conjugate non-Galois extensions \(E_3\simeq E_4\) and \(E_5\simeq E_6\) of \(M\)
correspond to the factors \((34)\) and \((56)\) of the cycle pattern \((1)(2)(34)(56)\)
of the two disjoint \(2\)-cycles \(\varkappa(M)=(124365)\),
and the Frobenius extensions \(E_1,E_2\) correspond to the fixed points \((1),(2)\).
For the nearly constant type \(\varkappa(M)=(022222)\),
the first (resp. second)  Frobenius extension \(E_1\) (resp. \(E_2\))
corresponds to the single total capitulation (resp. the single fixed point).
\end{remark}

%
%
%
%

\begin{theorem}[{\sf Single-stage towers of \(5\)-class fields with abelian group}]
\label{thm:AbelianImag}
For the \(\mathbf{2}\) (about \(\mathbf{2}\%\)) real quadratic fields \({k}_1\) with discriminants
\(d\in\lbrace 4357,\ 4444\rbrace\),
the \(5\)-dual field \({M}\) of \({k}_1\) has
\(5\)-principalization type \(\varkappa({M})=(000000)\), a constant with six total capitulations;
the abelian type invariants of \(E_1,\ldots,E_6\) are \(\tau({M})=\lbrack (1)^6\rbrack\),
and thus the \(5\)-class tower is abelian with group
\(\mathrm{G}_5^{(\infty)}{{M}}=\mathrm{G}_5^{(1)}{{M}}\simeq\langle 5^2,\mathbf{2}\rangle\) and length \(\ell_5(M)=1\).
\end{theorem}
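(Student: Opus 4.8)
The plan is to reduce the assertion to the characterization in Theorem \ref{thm:ClassNumbersImag}, which states that \(\ell_5(M)=1\) holds precisely when the second \(5\)-class group \(\mathrm{G}_5^{(2)}{M}\) is the abelian group \(\langle 25,2\rangle\simeq (5,5)\), in which case automatically \(\varkappa(M)=(000000)\) and \(\mathrm{Cl}_5(E_i)\simeq C_5\) for all \(i\). For the two discriminants \(d\in\{4357,4444\}\) the standing hypothesis \(\mathrm{Cl}_5(M)\simeq(5,5)\) is already recorded in Proposition \ref{prp:CountImag} and Example \ref{exm:AbelianImag}, so everything reduces to proving that \(\mathrm{G}_5^{(2)}{M}\) is \emph{abelian}, equivalently that the transfer target type degenerates to \(\tau(M)=\lbrack (1)^6\rbrack\).

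First I would compute the abelian type invariants of the six unramified extensions. By Proposition \ref{prp:UnitIndicesImag}(3) the class number relation \(\mathrm{h}_5(E_i)=5\cdot\mathrm{h}_5(L_i)^2\) holds for every imaginary \(M\) of type \((5,5)\); hence \(\mathrm{Cl}_5(E_i)\simeq C_5\) if and only if the non-Galois subfield \(L_i\) of relative degree \(5\) over \(k_0^+=\mathbb{Q}(\sqrt5)\) has trivial \(5\)-class group \(\mathrm{h}_5(L_i)=1\). For each \(d\in\{4357,4444\}\) I would construct the six fields \(L_i\) explicitly from the Kishi polynomials \(f(X,\gamma)\) and verify by direct computation that \(\mathrm{h}_5(L_i)=1\) for all \(1\le i\le 6\); this forces \(\mathrm{h}_5(E_i)=5\) and therefore \(\tau(M)=\lbrack (1)^6\rbrack\), as claimed.

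Next I would argue that \(\tau(M)=\lbrack (1)^6\rbrack\) pins down the abelian group. If \(G:=\mathrm{G}_5^{(2)}{M}\) is abelian of type \((5,5)\), then each of its six maximal subgroups is cyclic of order \(5\), so \(\mathrm{Cl}_5(E_i)\simeq C_5\) and \(\tau(M)=\lbrack (1)^6\rbrack\). Conversely, a non-abelian \(G\) with \(G/G^\prime\simeq(5,5)\) cannot be of coclass \(1\) by Theorem \ref{thm:NonAbelianImag}, and every remaining candidate lies in, or descends from, the stem of \(\Phi_6\) listed in Table \ref{tbl:TrfKerStem5Icl6}; for all of these some \(E_i\) satisfies \(\mathrm{h}_5(L_i)\ge 5\), producing a \(\tau\)-component \((1^3)\) or \((21)\) strictly larger than \((1)\). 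Thus the computed value \(\tau(M)=\lbrack (1)^6\rbrack\) is incompatible with non-abelianness, whence \(G\simeq\langle 25,2\rangle\).

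Finally, Theorem \ref{thm:ClassNumbersImag}, applied in the ``if'' direction, delivers the remaining conclusions simultaneously: \(\varkappa(M)=(000000)\), the tower stops after the first step, and \(\mathrm{G}_5^{(\infty)}{M}=\mathrm{G}_5^{(1)}{M}\simeq\langle 5^2,2\rangle\) with \(\ell_5(M)=1\). The main obstacle here is computational rather than structural: the entire argument hinges on reliably evaluating the six \(5\)-class numbers \(\mathrm{h}_5(L_i)\) for the two explicit quartic fields, which requires building the relative degree-\(5\) fields over \(k_0^+\) from the polynomials \(f(X,\gamma)\) and certifying \(\mathrm{h}_5(L_i)=1\). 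Once these six values are in hand, the passage to the abelian group \(\langle 25,2\rangle\) and to \(\ell_5(M)=1\) is immediate from the cited results.
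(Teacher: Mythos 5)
Your proposal is correct, and it reaches the conclusion by a genuinely different verification route than the paper. The paper's own proof is a single sentence: the facts for \(d\in\lbrace 4357,\ 4444\rbrace\) --- namely \(\mathrm{G}_5^{(2)}{M}\simeq\langle 5^2,2\rangle\), \(\varkappa(M)=(000000)\), \(\tau(M)=\lbrack(1)^6\rbrack\) --- are read off from the computations recorded in Table \ref{tbl:CycQrtFld1} (entries 29 and 30), and Theorem \ref{thm:ClassNumbersImag} then converts abelianness of \(\mathrm{G}_5^{(2)}{M}\) into \(\ell_5(M)=1\) and the remaining assertions. You instead certify abelianness in two explicit steps: you compute \(\tau(M)\) from the \(5\)-class numbers of the six degree-\(10\) subfields \(L_i\) via the relation \(\mathrm{h}_5(E_i)=5\cdot\mathrm{h}_5(L_i)^2\) of Proposition \ref{prp:UnitIndicesImag}, which is computationally much lighter than determining the class groups of the degree-\(20\) fields \(E_i\) or the second Hilbert \(5\)-class field; and you then make explicit the group-theoretic inference \(\tau(M)=\lbrack(1)^6\rbrack\Rightarrow\mathrm{G}_5^{(2)}{M}\) abelian, a step the paper leaves implicit when it passes from the tabulated data to Theorem \ref{thm:ClassNumbersImag}. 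Both steps are sound, so your argument is a legitimate, more self-contained alternative; what the paper's route buys is brevity, what yours buys is a cheaper computation and a visible logical bridge from the Artin pattern to the group.

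Two refinements would tighten your write-up. First, the elimination of non-abelian candidates does not need Theorem \ref{thm:NonAbelianImag} or the classification around the stem of \(\Phi_6\) (note also that Table \ref{tbl:TrfKerStem5Icl6} records \(\varkappa\), not \(\tau\), so it cannot be cited for \(\tau\)-components): it suffices to observe that every maximal subgroup \(H\) of a non-abelian \(p\)-group \(G\) with \(G/G^\prime\simeq(p,p)\) satisfies \(H^\prime\subseteq\gamma_3(G)\subsetneq G^\prime\), whence \(\lvert H/H^\prime\rvert\ge p^2\); thus even a single component \(\tau_i=(1)\) forces \(G\) to be abelian. Second, your plan to build all six \(L_i\) from Kishi polynomials \(f(X,\gamma)\) succeeds here only because both discriminants fall under case \((\mathrm{a})\) of Table \ref{tbl:5RankCasesImag}, where \(V_5(k_1)\) has rank two and all six \(E_i\) are Frobenius of type \(F_{5,2}\); in the generic case \((\mathrm{d})\) only \(E_1,E_2\) arise directly from Selmer elements and the other four subfields would have to be extracted from the compositum \(E_1E_2\), so the case distinction should be checked before the construction is invoked.
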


\begin{proof}
Here, the \(5\)-class tower is abelian with length \(\ell_5(M)=1\), according to Theorem
\ref{thm:ClassNumbersImag}.
\end{proof}

%
%
%
%
\mbox{} \vskip 5 cm 

\begin{remark}
\label{rmk:AbelianImag}
Outside the range \(0<d<10^4\)
of our systematic investigations,
we have discovered three occurrences of case (g) in Table
\ref{tbl:5RankCasesImag}.
For the real quadratic fields \(k_1\) with discriminants
\(d\in\lbrace 244641,\ 1277996,\ 1915448\rbrace\)
the \(5\)-dual field \(M\) of \(k_1\) has
\(5\)-principalization type \(\varkappa(M)=(000000)\), a constant with six total capitulations,
abelian type invariants \(\tau(M)=\lbrack (1)^6\rbrack\),
and abelian \(5\)-class tower with group
\(G_5^{(\infty)}{M}=G_5^{(1)}{M}\simeq\langle 5^2,\mathbf{2}\rangle\) and length \(\ell_5(M)=1\).
The invariants are given by
\((r_1,r_2,\delta_1,\delta_2)=(2,0,1,1)\).

\end{remark}

%
%
%
%

\begin{theorem}[{\sf Frobenius and non-Galois extensions}]
\label{thm:FrobeniusImag}
The properties of the absolute extensions \({E}_i/\mathbb{Q}\)
and the values of the invariants in the Quintic Reflection Theorem, Table
\ref{tbl:5RankCasesImag},
and Proposition
\ref{prp:FrobeniusFieldsImag},
for the \(\mathbf{83}\) cases in Proposition
\ref{prp:CountImag}
are the following ones:
\begin{enumerate}
\item[(i)]
For the \(\mathbf{2}\) cases with \(\ell_5(M)=1\) in Theorem
\ref{thm:AbelianImag},
we have
\[
(r_1,r_2,\delta_1,\delta_2)=(1,0,0,1)\;\text{ and }\;
\mbox{\rm{Gal}}({E}_i/\mathbb{Q})\simeq {F}_{5,2}\text{ for }1\le i\le 6 \;\;(\text{Case }(\mathrm{a})
\]
\item[(ii)]
For the other \(\mathbf{81}\) cases,
including the \(\mathbf{34}\) cases of \(\ell_5(M)=2\) in Theorem
\ref{thm:SchurSigmaImag}
and the \(\mathbf{38}\) cases of \(\ell_5(M)=2\) in Theorem
\ref{thm:UnusualSigmaImag},
we have
pairwise conjugate non-Galois extensions 
$${E}_3\simeq {E}_4, \; {E}_5\simeq {E}_6  \; \mbox{ with }\; 
\mathrm{Gal}({E}_1/ \mathbb{Q})\simeq {F}_{5,2},\; \mathrm{Gal}({E}_2/ \mathbb{Q})\simeq {F}_{5,3}$$
and  
$$\hskip 2cm  \begin{cases}
(r_1,r_2,\delta_1,\delta_2)=(1,0,1,0),\text{ for }  d\in\lbrace 1996,\ 3121,\ 3129,\ 3253,\ 5241,\ 5269,\\
\hspace{5cm}\  5308,6113,\ 8309,\ 8689,\ 9829\rbrace\;(\text{Case }(\mathrm{e})),\\[2mm]
(r_1,r_2,\delta_1,\delta_2)=(0,1,0,1),\text{ for }  d\in\lbrace 5116,\ 8972,\ 9013\rbrace\;(\text{Case }(\mathrm{f})),\\[2mm]
(r_1,r_2,\delta_1,\delta_2)=(1,1,1,1),\text{ for }  d\in\lbrace 4504,\ 6949,\ 7221,\ 7229,\ 9669\rbrace\;(\text{Case }(\mathrm{c})),\\[2mm]
(r_1,r_2,\delta_1,\delta_2)=(0,0,0,0),\text{ otherwise}\;(\text{Case }(\mathrm{d})).
\end{cases}
$$
\end{enumerate}
\end{theorem}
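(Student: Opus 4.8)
The plan is to combine the case analysis already established in Proposition \ref{prp:FrobeniusFieldsImag} with the explicit invariant computations recorded in Tables \ref{tbl:CycQrtFld1} and \ref{tbl:CycQrtFld2}. Since the hypothesis $\mathrm{Cl}_5(M)\simeq C_5\times C_5$ forces $\varrho_5(M)=2$, the quintic reflection theorem \eqref{eqn:5Reflection} guarantees that exactly one of the eight disjoint cases $(\mathrm{a})$--$(\mathrm{h})$ of Table \ref{tbl:5RankCasesImag} is realized, and Proposition \ref{prp:FrobeniusFieldsImag} translates each case into the decomposition behaviour of the six absolute extensions $E_i/\mathbb{Q}$. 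This yields a clean structural dichotomy: in the \emph{uniform} cases $(\mathrm{a}),(\mathrm{g})$ (resp.\ $(\mathrm{b}),(\mathrm{h})$) all six $E_i$ are Frobenius with the single group $F_{5,2}$ (resp.\ $F_{5,3}$), whereas in the \emph{mixed} cases $(\mathrm{c})$--$(\mathrm{f})$ precisely two of the $E_i$ are Frobenius, carrying the non-isomorphic groups $F_{5,2}$ and $F_{5,3}$, while the remaining four split into the two conjugate non-Galois pairs $E_3\simeq E_4$ and $E_5\simeq E_6$.

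For part (i), I would start from Theorem \ref{thm:AbelianImag}, which identifies the two fields of single-stage tower as those with $d\in\{4357,4444\}$. For each I would compute the $5$-class ranks $r_1=\varrho_5(k_1)$, $r_2=\varrho_5(k_2)$ and the primarity invariants $\delta_1,\delta_2$, obtaining the tuple $(r_1,r_2,\delta_1,\delta_2)=(1,0,0,1)$; comparison with Table \ref{tbl:5RankCasesImag} places both fields in Case $(\mathrm{a})$, so Proposition \ref{prp:FrobeniusFieldsImag}(1) gives $\mathrm{Gal}(E_i/\mathbb{Q})\simeq F_{5,2}$ for all $1\le i\le 6$. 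For part (ii), the remaining $81$ fields --- the $34$ fields of Theorem \ref{thm:SchurSigmaImag} and the $38$ fields of Theorem \ref{thm:UnusualSigmaImag}, together with the rest --- are shown by the same computation to realize only the mixed cases $(\mathrm{c})$--$(\mathrm{f})$. Proposition \ref{prp:FrobeniusFieldsImag}(3) then delivers the asserted two Frobenius extensions together with the two conjugate non-Galois pairs, and the finer assignment of the listed discriminants to Cases $(\mathrm{e})$, $(\mathrm{f})$, $(\mathrm{c})$, with all others falling into Case $(\mathrm{d})$, is read off directly from the tables.

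The substantive work lies in the invariant computation rather than in any new structural argument. The hard part will be reliably determining the pair $(\delta_1,\delta_2)$, since each $\delta_j$ requires testing whether the $5$-Selmer group $V_5(k_j)$ contains a virtual unit that fails to be $5$-primary --- a delicate congruence condition on fundamental units modulo $5^3$ of the kind appearing in \eqref{eqn:KishiUnits}. Here the reflection identity, which in the present real setting (both $k_1,k_2$ have unit rank $1$, so $\sigma_5(k_j)=r_j+1$) reduces to $\delta_1+\delta_2=r_1+r_2$ together with $0\le\delta_j\le 1$, serves as a valuable consistency check that pins the case down uniquely once $r_1,r_2$ are known. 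The one genuinely qualitative point to verify is that none of the $81$ non-abelian fields slips into a uniform case $(\mathrm{a}),(\mathrm{b}),(\mathrm{g}),(\mathrm{h})$: this amounts to confirming that the unramified cyclic quintic extensions of $M$ are generated by virtual units drawn from \emph{both} quadratic fields $k_1$ and $k_2$, so that an $F_{5,2}$-extension and an $F_{5,3}$-extension genuinely coexist, which is exactly the content extracted from the computed Selmer data.
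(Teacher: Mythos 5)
Your proposal is correct and takes essentially the same route as the paper: the paper's entire proof of this theorem is the reference \textquotedblleft See Tables \ref{tbl:CycQrtFld1} and \ref{tbl:CycQrtFld2}\textquotedblright, i.e.\ a field-by-field computation of the invariants \((r_1,r_2,\delta_1,\delta_2)\) for all \(83\) discriminants, combined with the structural dichotomy of Proposition \ref{prp:FrobeniusFieldsImag} --- exactly what you describe. Your extra consistency check \(\delta_1+\delta_2=r_1+r_2\) (from the reflection identity with \(\sigma_5(k_j)=r_j+1\)) is valid and a nice safeguard, though the paper does not invoke it.
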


\begin{proof}
See Tables
\ref{tbl:CycQrtFld1}
and
\ref{tbl:CycQrtFld2}.
\end{proof}

%
%
%
%
%
%
%
%
%
%
%
%

\subsection{Real cyclic quartic fields \(M\) with \(d<0\)}
\label{ss:RealFields}

%
%
%
%

\begin{proposition}
\label{prp:UnitIndicesReal}
Let \(M=\mathbb{Q}\left((\zeta_{5}-\zeta_{5}^{-1})\sqrt{d}\right)\) with \(d<0\) be
a real cyclic quartic field with 5-class group of type \((5,5)\).
Denote by \(E_i,\,\,1\leq i\leq 6\),
the six unramified cyclic quintic extensions of \(M\)
and by \(L_i\) their non-Galois subfields,
which are of relative degree \(5\) over the field \(k_0^+=\mathbb{Q}(\sqrt{5})\).

$(1)$
If \(\#ker(j_{E_{i}/ M})=5\), then the unit norm index is 
$$u_{i}=(U_{k_0^+}:\mathrm{N}_{L_i/ k_0^+}(U_{L_i}))=1,$$
and in this case the subfield unit index \(a_{i}=\left(U_{E_{i}}:U_{L_{i}}U_{L_{i}^\prime}U_{M}\right)\) is equal to \(25\).

$(2)$
The relation between the \(5\)-class numbers \(\mathrm{h}_5(E_i)\)
and \(\mathrm{h}_5(L_i)\) is given by
$$
\mathrm{h}_5(E_i)=
\begin{cases}
5\cdot\mathrm{h}_5(L_i)^{2} & \text{ if } b_{i}=u_{i}, \\
\mathrm{h}_5(L_i)^{2} & \text{ if } b_{i}\neq u_{i}, \text{ where } b_i=\left(U_{M}:\mathrm{N}_{E_{i}/M}(U_{E_{i}})\right).
\end{cases}
$$
\end{proposition}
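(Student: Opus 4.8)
The plan is to follow the strategy of Proposition \ref{prp:UnitIndicesImag}, but to account for the fact that now $M$ is \emph{totally real} of degree $4$, so its torsion-free Dirichlet unit rank jumps to $r(M)=3$ (as opposed to $r(M)=1$ in the imaginary case). I would begin by applying Lemma \ref{lem:UnitIndices} with $p=5$, $F=k_0^+$, $k=M$, $L=E_i$, $K=L_i$, $K'=L_i'$, writing $u_i=(U_{k_0^+}:\mathrm{N}_{L_i/k_0^+}(U_{L_i}))$ and $b_i=(U_M:\mathrm{N}_{E_i/M}(U_{E_i}))$. Using the identity $\frac{(U_M:U_M^5)}{(U_{k_0^+}:U_{k_0^+}^5)}=5^{\,r(M)-r(k_0^+)}=5^{3-1}=25$ recorded after Lemma \ref{lem:UnitIndices}, this yields the key relation
\[
a_i\,b_i=25\,u_i.
\]
Since $U_{k_0^+}^5\subseteq\mathrm{N}_{L_i/k_0^+}(U_{L_i})$ and $U_M^5\subseteq\mathrm{N}_{E_i/M}(U_{E_i})$, together with $\#\ker(j_{E_i/M})=5b_i\le\#\mathrm{Cl}_5(M)=25$, both indices satisfy $u_i,b_i\in\{1,5\}$.

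For part (1), the hypothesis $\#\ker(j_{E_i/M})=5$ gives $b_i=1$ through the Herbrand-quotient relation $\#\ker(j_{E_i/M})=5b_i$ recorded after Lemma \ref{lem:UnitIndices}. The crucial step is then to show that this forces $u_i=1$, which I would do by a norm chase. From $b_i=1$ there is $\xi\in U_{E_i}$ with $\mathrm{N}_{E_i/M}(\xi)=\epsilon_5$, where $\epsilon_5$ denotes the fundamental unit of $k_0^+$. Computing $\mathrm{N}_{E_i/k_0^+}(\xi)$ along the two towers $k_0^+\subseteq M\subseteq E_i$ and $k_0^+\subseteq L_i\subseteq E_i$ gives
\[
\mathrm{N}_{M/k_0^+}(\epsilon_5)=\epsilon_5^{2}=\mathrm{N}_{L_i/k_0^+}\bigl(\mathrm{N}_{E_i/L_i}(\xi)\bigr),
\]
so that $\epsilon_5^{2}\in\mathrm{N}_{L_i/k_0^+}(U_{L_i})$. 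Because $-1=\mathrm{N}_{L_i/k_0^+}(-1)$ already lies in the norm group, the quotient $U_{k_0^+}/\mathrm{N}_{L_i/k_0^+}(U_{L_i})$ is generated by the class of $\epsilon_5$ and has order $u_i\in\{1,5\}$; as $2$ is invertible modulo $5$, the relation $\bar\epsilon_5^{\,2}=1$ forces $\bar\epsilon_5=1$, whence $u_i=1$. Feeding $u_i=b_i=1$ back into $a_i b_i=25u_i$ gives $a_i=25$, completing part (1).

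For part (2), I would insert $\#\ker(j_{E_i/M})=5b_i$ and $\mathrm{h}_5(M)=25$ into the class number relation of Theorem \ref{thm:ClassNumbers} to obtain
\[
\mathrm{h}_5(E_i)=\frac{u_i}{5b_i}\cdot 25\cdot \mathrm{h}_5(L_i)^2=\frac{5u_i}{b_i}\,\mathrm{h}_5(L_i)^2.
\]
Since $u_i,b_i\in\{1,5\}$, the factor $\tfrac{5u_i}{b_i}$ equals $5$ whenever $b_i=u_i$, and equals $1$ or $25$ when $b_i\ne u_i$. Here part (1) is decisive: it excludes the combination $(b_i,u_i)=(1,5)$, so $b_i\ne u_i$ can only mean $(b_i,u_i)=(5,1)$, which makes the factor $1$. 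This yields exactly the two cases of the stated formula.

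The step I expect to be the main obstacle is the one-directional implication $b_i=1\Rightarrow u_i=1$ in part (1). Unlike the imaginary case of Proposition \ref{prp:UnitIndicesImag}, where the relevant unit ranks coincide and one recovers the symmetric equality $u_i=b_i$, here the norm chase only deposits $\epsilon_5^{2}$ (rather than $\epsilon_5$) in the norm group, and it is the invertibility of $2$ modulo $5$ that rescues the argument. Confirming that it is precisely $\epsilon_5^{2}$ that emerges, and that the relevant quotient is cyclic of order dividing $5$ so that squaring is injective on it, are the points requiring care.
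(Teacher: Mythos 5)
Your proposal is correct and follows essentially the same route as the paper: Lemma \ref{lem:UnitIndices} together with the unit-rank computation \(r(M)=3\) yields \(a_ib_i=25u_i\), the Herbrand-quotient relation \(\#\ker(j_{E_i/M})=5b_i\) converts the hypothesis into \(b_i=1\), and the norm chase from \(\epsilon_5=\mathrm{N}_{E_i/M}(\xi)\) down to \(k_0^+\) forces \(u_i=1\), after which Theorem \ref{thm:ClassNumbers} gives part (2). The only cosmetic difference is at the end of the norm chase: the paper (via the argument of Proposition \ref{prp:UnitIndicesImag}) exhibits an explicit unit with norm \(\epsilon_5\) by cubing and dividing by \(\mathrm{N}_{L_i/k_0^+}(\epsilon_5)=\epsilon_5^5\), while you argue abstractly that \(\bar\epsilon_5^{\,2}=1\) in a cyclic quotient of order dividing \(5\) forces \(\bar\epsilon_5=1\) --- both are the observation that \(2\) is invertible modulo \(5\).
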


\begin{proof}
Since \(d<0\), the field \(M\) is totally real,
and the Dirichlet rank of its torsion-free unit group is given by \(r(M)=3\).

(1)
Denote by \(U_{M/ \mathbb{Q}(\sqrt{5})}\) the group of relative units 
$$\lbrace\epsilon\in U_M\mid\mathrm{N}_{M/ \mathbb{Q}(\sqrt{5})}\left(\epsilon\right)=1\rbrace.$$
For a cyclic quartic field \(K/ \mathbb{Q}\) with real quadratic subfield \(k\),
Hasse showed that the group \(U_{k}U_{K/ k}\) has index at most \(2\)
in the full group of units \(U_{K}\).
In our case,
\(U_{\mathbb{Q}(\sqrt{5})}U_{M/ \mathbb{Q}(\sqrt{5})}\) has index at most \(2\) in \(U_{M}\),
where
$$U_{M}=\langle -1,\epsilon_{5},\eta,\eta^{\mu\tau}\rangle,$$
with \(\eta\) satisfying \(\eta^{1+\mu\tau}=\pm 1\).
If \(\#\ker(j_{E_{i}/ M})=5\),
which means that the unit norm index \(b_{i}=(U_{M}:\mathrm{N}_{E_i/ M}(U_{E_i}))\) is equal to \(1\),
then all units of \(M\) are the norms of a unit of \(E_{i}\),
in particular \(\epsilon_{5}\).
In the same manner as in the proof of claim 1 of Proposition
\ref{prp:UnitIndicesImag},
we deduce that \(\epsilon_{5}\) is also the norm of a unit of \(L_{i}\),
whence \(u_{i}=1\).
On the other hand, by applying Lemma
\ref{lem:UnitIndices},
we deduce that \(a_{i}\cdot b_{i}=25\cdot u_{i}\), and consequently
\(a_{i}=25\).

(2)
According to Theorem
\ref{thm:ClassNumbers}
or the Lemmermeyer class number formula
\cite[Th.\,2.4, p.\,681]{Lm},
we conclude that
\(\mathrm{h}_5(E_i)=5\cdot\mathrm{h}_5(L_i)^{2}\) if
\(b_{i}=1\) or \(\left(b_{i}=5 \,\, \text{and}\,\, u_{i}=5 \right)\).
But if \(b_{i}=5 \,\, \text{and}\,\, u_{i}=1\),
we have \(\mathrm{h}_5(E_i)=\mathrm{h}_5(L_i)^{2}\),
which completes the proof.
\end{proof}

%
%
%
%

\begin{remark}
\label{rmk:UnitIndicesReal}
For totally real or imaginary cyclic quartic fields \(M\),
the last case    of Theorem
\ref{thm:ClassNumbers}  given by  \mbox{\(\mathrm{h}_5(E_i)=25\cdot\mathrm{h}_5(L_i)^2\)}  
is impossible for any \( i\in   \{1,2,3,4,5,6\}\).
\end{remark}

%
%
%
%

\begin{proposition}
\label{prp:NonAbelianReal}
Let \(M=\mathbb{Q}\left((\zeta_{5}-\zeta_{5}^{-1})\sqrt{d}\right)\) with \(d<0\)
be a real cyclic quartic field with \mbox{ \(5\)-class} group of type \((5,5)\).
Let \(E_i,\,\,1\leq i\leq 6\), be the six unramified cyclic quintic extensions of \(M\).
Denote by \(G:=\mathrm{G}_{5}^{(2)}(M)\) the second \(5\)-class group of \(M\) and
assume that the order of \(G\) is equal to \(|G|=5^{3}\).
Then the transfer kernel type of \(G\) is \(\varkappa(G)=(000000)\)
(capitulation type of \(M\) in the six unramified extensions \(E_{i}\))
and the transfer target type of \(G\) is \(\tau(G)=\left\lbrack(1^2)^{6}\right\rbrack\).
\end{proposition}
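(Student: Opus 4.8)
The plan is to first pin down the isomorphism type of $G$ and then read off its Artin pattern $(\varkappa(G),\tau(G))$ by a direct transfer computation. Since $G/G'\simeq\mathrm{Cl}_5(M)\simeq(5,5)$ is $2$-generated and $\lvert G\rvert=5^3$, the derived subgroup $G'$ has order $5$ and $G$ is non-abelian; hence $G$ is one of the two extra-special groups of order $5^3$, namely the Heisenberg group of exponent $5$, $\langle 125,3\rangle$, or the group of exponent $25$, $\langle 125,4\rangle$. In both cases $G'=Z(G)=\Phi(G)\simeq C_5$ and there are exactly $5+1=6$ maximal subgroups, each of order $25$, matching the six extensions $E_i$.

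The first key step is to exclude the exponent-$25$ group by means of the $\sigma$-structure. Because the quadratic subfield $k_0^+=\mathbb{Q}(\sqrt 5)$ has class number $1$, Theorem~\ref{thm:CycQuart}(ii) guarantees that $G$ is a $\sigma$-group of degree $2$, i.e.\ it admits a generator-inverting automorphism $\sigma$ with $\sigma(x)\equiv x^{-1}\pmod{G'}$ for every $x$. Writing the exponent-$25$ group as $\langle a,b\mid a^{25}=b^5=1,\ b^{-1}ab=a^{6}\rangle$ with $c:=a^{5}=[a,b]$ generating $G'=Z(G)$, any such $\sigma$ must send $a\mapsto a^{-1}c^{i}$ and $b\mapsto b^{-1}c^{j}$. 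Since $c$ is central and $G$ has class $2$, the identity $[x^{-1},y^{-1}]=[x,y]$ yields $[\sigma(a),\sigma(b)]=[a,b]=a^{5}$, whereas compatibility with the relation $[a,b]=a^{5}$ forces $[\sigma(a),\sigma(b)]=\sigma(a)^{5}=a^{-5}$; as $a^{5}\neq a^{-5}$, no generator-inverting automorphism exists. I would check separately that the exponent-$5$ Heisenberg group does admit one (the map $a\mapsto a^{-1},\,b\mapsto b^{-1}$ works), so the $\sigma$-group property isolates $G\simeq\langle 125,3\rangle$.

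Once $G$ is identified as the Heisenberg group, both invariants follow by routine computation. Every maximal subgroup has order $25$ and exponent $5$, hence is isomorphic to $C_5\times C_5$; since there are six of them, $\tau(G)=\lbrack(1^2)^6\rbrack$, which under the standard isomorphism $\mathrm{Cl}_5(E_i)\cong H_i^{\mathrm{ab}}$ records the abelian type invariants of the six fields $E_i$. For the kernel type I would evaluate the Artin transfers directly: with $H=\langle a,c\rangle$ and transversal $\{1,b,\dots,b^{4}\}$, conjugation gives $b^{-j}ab^{j}=ac^{j}$, so the transfer of $a$ equals $\prod_{j=0}^{4}ac^{j}=a^{5}c^{\binom{5}{2}}=1$ (both factors vanish because $G$ has exponent $5$ and $5\mid\binom{5}{2}$), while the transfer of $b$ equals $b^{5}=1$ from the $5$-cycle induced on the cosets. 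Thus both generators of $G/G'$ lie in the transfer kernel, so $H$ absorbs the full class group. As $\mathrm{Aut}(G)$ induces all of $\mathrm{GL}_2(\mathbb{F}_5)$ on $G/G'$ and therefore acts transitively on the six maximal subgroups, the same total capitulation holds for every $E_i$, giving $\varkappa(G)=(000000)$.

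The only genuine obstacle is the elimination of the exponent-$25$ group. It is worth stressing why the purely arithmetic input is insufficient here: both extra-special groups have maximal subgroups of order $25$, so Proposition~\ref{prp:UnitIndicesReal} only yields the even valuation $v_5(\mathrm{h}_5(E_i))=2$ in either case and cannot distinguish them. The decisive ingredient is instead the generator-inverting automorphism supplied by Theorem~\ref{thm:CycQuart}(ii); this is precisely the feature absent for imaginary $M$, where Theorem~\ref{thm:NonAbelianImag} forces $\mathrm{cc}(G)\geq 2$ and hence $\lvert G\rvert\geq 5^4$, so that the order-$5^3$ situation can occur only in the present real case $d<0$.
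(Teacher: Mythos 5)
Your proof is correct, but it takes a genuinely different route from the paper's. The paper never identifies the isomorphism type of \(G\) first; it argues arithmetically: since \(G\) is extraspecial, \(\varkappa(G)\in\lbrace(111111),(000000)\rbrace\), and type \((111111)\) is excluded because partial capitulation \(\#\ker(j_{E_i/M})=5\) forces \(b_i=1\) and, by claim (1) of Proposition \ref{prp:UnitIndicesReal}, \(u_i=1\), whence claim (2) gives \(\mathrm{h}_5(E_i)=5\cdot\mathrm{h}_5(L_i)^2\) with odd \(5\)-valuation, contradicting \(\mathrm{h}_5(E_i)=5^2\); then, for the surviving type \((000000)\), the same machinery (\(b_i=5\), \(u_i=1\), the Lemmermeyer formula, and Taussky's type A) yields \(\mathrm{Cl}_5(E_i)\simeq(5,5)\) and hence \(\tau(G)\). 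You instead pin down \(G\simeq\langle 125,3\rangle\) via the Galois-action input of Theorem \ref{thm:CycQuart}(ii): your computation that the exponent-\(25\) extraspecial group admits no generator-inverting automorphism is correct (the GI-automorphism would have to fix \(G'=\langle a^5\rangle\) elementwise while inverting fifth powers), and your transfer computations in the Heisenberg group, together with the transitivity of \(\mathrm{Aut}(G)\) on the six maximal subgroups, correctly give \(\varkappa(G)=(000000)\) and \(\tau(G)=\lbrack(1^2)^6\rbrack\). Your route proves slightly more (the isomorphism type of \(G\), which the paper only records in Theorem \ref{thm:LowSigmaReal}), and it trades class-number arithmetic for pure group theory plus the \(\sigma\)-structure.

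However, your closing claim that \emph{the purely arithmetic input is insufficient} and that Proposition \ref{prp:UnitIndicesReal} \emph{cannot distinguish} the two extraspecial groups is wrong, and it mischaracterizes how that proposition is applied. It is not used to compute \(v_5(\mathrm{h}_5(E_i))\) in isolation and compare it across the two groups; it links the capitulation type to the parity of that valuation. The two candidates have different transfer kernel types, \((111111)\) versus \((000000)\), hence different values of \(b_i\), and exactly this difference, fed into claims (1) and (2), produces a contradiction for \(\langle 125,4\rangle\) (odd valuation forced, even valuation present) but not for \(\langle 125,3\rangle\). So the arithmetic does suffice --- it is precisely the paper's proof. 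This error does not invalidate your argument, since the remark is commentary rather than a step in your logical chain, but you should delete or correct it.
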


\begin{proof}
In this case, the group \(G\) is extra special of maximal class.
Thus, the possible types of capitulation are \((111111)\) and \((000000)\).
First, we know that the type \((111111)\) is not possible,
because in this case \(\mathrm{h}_5(E_i)=5^2\) and \(b_i=1\),
which contradicts claim (2) of Proposition
\ref{prp:UnitIndicesReal}.
Thus, the transfer kernel type of \(G\) is \(\varkappa(G)=(000000)\).

On the other hand, for all \(1\leq i\leq 6\),
the unit norm index is \(b_{i}=5\), and \(u_{i}\) must be equal to \(1\).
Otherwise, the Lemmermeyer class number formula
\cite[Th.\,4.1, p.\,456]{Lm}
implies \(|G|\geq 5^{4}\).
Thus, for all \(1\leq i\leq 6\), we have
\(\mathrm{h}_5(E_i)=\mathrm{h}_5(L_i)^2\) and \(\mathrm{h}_5(L_i)=5\).
Since the six extensions \(E_{i}\) are of type \(A\) in the sense of Taussky
and \(\mathrm{h}_5(E_i)=5^{2}\),
we deduce that \(\mathrm{Cl}_{5}(E_{i})\) is of type \((5,5)\).
Thus \(\tau(G)=\left\lbrack(1^2)^{6}\right\rbrack\).
\end{proof}

%
%
%
%

\begin{remark}
\label{rmk:NonAbelianReal}
Assume that the group \(G\) is not abelian and \(\varkappa(G)=(000000)\).
Then the prime \(5\) must divide the class number of the fields \(L_{i}\).
Because, in this case
\(b_{i}=5\) and \(u_{i}=1\,\text{ or }\,5\).
The case \(u_{i}=1\) is obvious.
Now suppose that \(u_{i}=5\).
If \(5\) does not divide
\(\mathrm{h}(L_i)\), then \(\mathrm{h}_5(E_i)=5\),
Hence \(E_{i}\) is an unramified extension of \(M\) and
\(\mathrm{h}_5(E_i)=\frac{\mathrm{h}_5(M)}{5}\).
Then \(M_{5}^{(2)}=M_{5}^{(1)}\) and the group \(G\) is abelian,
which is a contradiction.
\end{remark}

%
%
%
%

\begin{proposition}[{\sf Number of fields}]
\label{prp:CountReal}
In the range \(-200000<d<0\) of fundamental discriminants \(d\)
of imaginary quadratic fields \({k}_1=\mathbb{Q}(\sqrt{d})\) with \(\gcd(5,d)=1\),
there exist precisely \(\mathbf{93}\) cases such that
the \(5\)-dual field \({M}=\mathbb{Q}\left((\zeta_5-\zeta_5^{-1})\sqrt{d}\right)\) of \({k}_1\)
has a \(5\)-class group \(\mathrm{Cl}_5(M)\) of type \((5,5)\).
\end{proposition}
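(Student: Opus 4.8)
The plan is to reduce the assertion to a finite, exhaustive machine computation, exactly as in the imaginary counterpart Proposition \ref{prp:CountImag}. For each fundamental discriminant \(d\) with \(-200000<d<0\) and \(\gcd(5,d)=1\), I would construct the \(5\)-dual cyclic quartic field \(M\) from the minimal polynomial \(\mathrm{Irr}_\mathbb{Q}(\alpha)=X^4+5dX^2+5d^2\) of Corollary \ref{cor:MinPol}, determine the structure of its \(5\)-class group \(\mathrm{Cl}_5(M)\), and retain precisely those \(d\) for which \(\mathrm{Cl}_5(M)\) is elementary bicyclic of type \((5,5)\). The claim is that this list has exactly \(93\) entries.

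To keep the search tractable over a range extending to \(2\cdot 10^5\), I would first prefilter the candidates by the arithmetic of the two associated quadratic fields, which for \(d<0\) are both imaginary, namely \(k_1=\mathbb{Q}(\sqrt{d})\) and \(k_2=\mathbb{Q}(\sqrt{5d})\). By the quintic reflection theorem \eqref{eqn:5Reflection}, one has \(\varrho_5(M)=\sigma_5(k_1)+\sigma_5(k_2)-\delta_1-\delta_2\), so the necessary condition \(\varrho_5(M)=2\) can be tested cheaply by computing the \(5\)-Selmer ranks \(\sigma_5(k_i)\) and the virtual-unit invariants \(\delta_1,\delta_2\); since \(k_1,k_2\) have unit rank zero, these reduce to \(5\)-rank computations of imaginary quadratic class groups. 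Only the discriminants realizing one of the five cases of Table \ref{tbl:5RankCasesReal} survive, and a direct quartic class-group computation is then needed only for this much smaller set.

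For each surviving candidate I would compute \(\mathrm{Cl}_5(M)\) in full and check not merely that its rank equals two but that its exponent equals \(5\), thereby excluding groups such as \((25,5)\) that also have \(5\)-rank two; this exponent check is the step that the reflection-theoretic rank bound does not by itself settle. Counting the discriminants that pass both stages gives the stated total of \(93\). The principal obstacle is the sheer scale of the computation: without the quadratic prefiltering a direct quartic class-group computation at every \(d\) would be prohibitive. A further point deserving care is that efficient class-group computations in this range are customarily performed under GRH; to make the count unconditional one should either record this dependence explicitly or re-verify the class numbers of the finitely many surviving fields without that hypothesis.
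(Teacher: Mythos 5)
Your proposal is correct and coincides in substance with the paper's own justification, which consists precisely of such an exhaustive machine computation over the range \(-200000<d<0\), recorded in Tables \ref{tbl:CycQrtFld3}, \ref{tbl:CycQrtFld4} and \ref{tbl:CycQrtFld5}. Your additional remarks on prefiltering via the reflection theorem, the exponent check excluding types like \((25,5)\), and the GRH caveat are sensible implementation details of the same verification, not a different method.
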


\begin{proof}
See Tables
\ref{tbl:CycQrtFld3},
\ref{tbl:CycQrtFld4}
and
\ref{tbl:CycQrtFld5}.
\end{proof}

%
%
%
%

\begin{theorem}[{\sf Two-stage towers of \(5\)-class fields with groups of low order}]
\label{thm:LowSigmaReal} \mbox{}
 
$(1)$
If the \(5\)-dual field \({M}\) of \({k}_1\) has
\(5\)-principalization type \(\mathrm{a}.1\), \(\varkappa({M})=(000000)\), a constant with six total capitulations,
and the abelian type invariants of \(E_1,\ldots,E_6\) are \(\tau({M})=\lbrack (1^2)^6\rbrack\),
then the \(5\)-tower group is the extra special group
\(\mathrm{G}_5^{(\infty)}{{M}}=\mathrm{G}_5^{(2)}{{M}}\simeq\langle 5^3,\mathbf{3}\rangle\). 
 
$(2)$
If the \(5\)-dual field \({M}\) of \({k}_1\) has
\(5\)-principalization type \(\mathrm{a}.2\), \(\varkappa({M})=(100000)\) with a fixed point and five total capitulations,
and the abelian type invariants of \(E_1,\ldots,E_6\) are \(\tau({M})=\lbrack 1^3,(1^2)^5\rbrack\),
then the \(5\)-tower group is the Schur\(+1\) \(\sigma\)-group
\(\mathrm{G}_5^{(\infty)}{{M}}=\mathrm{G}_5^{(2)}{{M}}\simeq\langle 5^4,\mathbf{8}\rangle\).

$(3)$
If the \(5\)-dual field \({M}\) of \({k}_1\) has
\(5\)-principalization type \(\mathrm{a}.1\), \(\varkappa({M})=(000000)\), a constant with six total capitulations,
and the abelian type invariants of \(E_1,\ldots,E_6\) are \(\tau({M})=\lbrack 1^3,(1^2)^5\rbrack\),
then the \(5\)-tower group is the mainline group
\(\mathrm{G}_5^{(\infty)}{{M}}=\mathrm{G}_5^{(2)}{{M}}\simeq\langle 5^4,\mathbf{7}\rangle\). 

\end{theorem}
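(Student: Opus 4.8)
The plan is to establish all three assertions by a common two-stage template: first identify the metabelianization $G=\mathrm{G}_5^{(2)}{M}$ from its Artin pattern $(\varkappa(M),\tau(M))$, and then prove that the $5$-class tower already terminates at the second stage, so that $\mathrm{G}_5^{(\infty)}{M}=G$ and $\ell_5(M)=2$. Throughout I would use that, since $d<0$, the field $M$ is totally real of signature $(4,0)$, hence its torsion-free Dirichlet unit rank is $r=4-1=3$; moreover $\mathrm{Cl}_5(k_0^+)=1$ because $h(\mathbb{Q}(\sqrt5))=1$, so by Theorem \ref{thm:CycQuart} the group $G$ is simultaneously a $\sigma$-group of degree $4$ and of degree $2$ and is therefore subject to Theorem \ref{thm:Sigma5GroupsDegree4}.

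The identification step converts the abelian type invariants into $5$-class numbers and feeds them into Proposition \ref{prp:UnitIndicesReal}. For part $(1)$ the type $\tau(M)=\lbrack(1^2)^6\rbrack$ forces $\mathrm{h}_5(E_i)=5^2$ for all $i$; since Proposition \ref{prp:UnitIndicesReal}(2) permits only $\mathrm{h}_5(E_i)\in\lbrace 5\,\mathrm{h}_5(L_i)^2,\mathrm{h}_5(L_i)^2\rbrace$, the value $5^2$ is attainable solely through $\mathrm{h}_5(E_i)=\mathrm{h}_5(L_i)^2$ with $\mathrm{h}_5(L_i)=5$, forcing $b_i=5$, $u_i=1$ and total capitulation $\#\ker(j_{E_i/M})=25$, in agreement with $\varkappa(M)=(000000)$. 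This is precisely the situation of Proposition \ref{prp:NonAbelianReal}, whose argument yields $\lvert G\rvert=5^3$ and $G$ extra special of maximal class; the exponent-$5$ alternative $\langle 5^3,3\rangle$ (rather than $\langle 5^3,4\rangle$ of exponent $25$) is selected because all six maximal subgroups abelianize to type $(5,5)$. For parts $(2)$ and $(3)$ the type $\tau(M)=\lbrack 1^3,(1^2)^5\rbrack$ contains a rank-three component $\mathrm{Cl}_5(E_{i_0})\simeq(5,5,5)$, which a maximal subgroup of a group of order $5^3$ can never carry; this raises the order to $\lvert G\rvert=5^4$, where exactly two $\sigma$-groups of degree $4$ admitted by Theorem \ref{thm:Sigma5GroupsDegree4} have this transfer target type. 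They are separated by $\varkappa(M)$: the single fixed point of $\varkappa(M)=(100000)$ corresponds, via $\#\ker(j_{E_1/M})=5$ and the index bookkeeping of Proposition \ref{prp:UnitIndicesReal}(1), to the partial capitulation of the Schur$+1$ group $\langle 5^4,8\rangle$, whereas the constant $\varkappa(M)=(000000)$ matches the mainline group $\langle 5^4,7\rangle$; matching the remaining cycle-pattern data pins $G$ down uniquely.

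For part $(1)$ the termination is clean and automatic: since $G=\langle 5^3,3\rangle$ is extra special, its derived subgroup $G'=Z(G)\simeq C_5$ is cyclic. Hence any tower group $\widetilde{G}=\mathrm{G}_5^{(\infty)}{M}$ with $\widetilde{G}/\widetilde{G}''\simeq\langle 5^3,3\rangle$ has $\widetilde{G}'/\widetilde{G}''\simeq C_5$ cyclic, so by the Burnside basis theorem $\widetilde{G}'$ is generated by a single element, is abelian, and $\widetilde{G}''=1$; thus $\widetilde{G}$ is metabelian and equals $\langle 5^3,3\rangle$.

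The hard part is the termination for parts $(2)$ and $(3)$, where the derived subgroup of the order-$5^4$ group is no longer cyclic and the cheap argument above fails. Here I would invoke the Shafarevich inequality in its corrected form \cite[Th.\,5.1]{Ma10}, which with $d_1(G)=\varrho_5(M)=2$ and $r=3$ reads $d_2(G)\le d_1(G)+r=5$, and then compute the relation rank of each candidate (the value $d_2=3$ for the Schur$+1$ group $\langle 5^4,8\rangle$, and a value $\le 5$ for the mainline group $\langle 5^4,7\rangle$), verifying it lies in the admissible window $d_1\le d_2\le d_1+r$ so that each group is realizable as a tower group. The genuinely delicate and decisive step is to exclude a non-metabelian continuation: one must show that the \emph{cover} of each identified group, computed within the restricted class of $\sigma$-groups of degree $4$ generated by the $p$-group generation algorithm and constrained by Theorem \ref{thm:Sigma5GroupsDegree4}, is trivial — so that no non-metabelian $5$-group shares the same metabelianization and Artin pattern. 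This is expected to be the principal obstacle, since the mainline group $\langle 5^4,7\rangle$ is capable in the full descendant tree and only the $\sigma$-degree-$4$ restriction forbids its proper descendants. Once cover-triviality is established, $\mathrm{G}_5^{(\infty)}{M}=\mathrm{G}_5^{(2)}{M}=G$ and $\ell_5(M)=2$ follow, completing all three parts.
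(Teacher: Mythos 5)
Your identification of the three metabelianizations from the Artin patterns (via Propositions \ref{prp:UnitIndicesReal} and \ref{prp:NonAbelianReal}) is reasonable, and your termination argument in part (1) is correct: since $G'\simeq C_5$, the tower group $\widetilde G=\mathrm{G}_5^{(\infty)}M$ has $\widetilde G'/\widetilde G''$ cyclic of exponent $5$, so $(\widetilde G')^5\le\widetilde G''$, hence $\Phi(\widetilde G')=\widetilde G''$, and $\widetilde G'$ is procyclic, therefore abelian, giving $\widetilde G''=1$. The genuine gap is in parts (2) and (3), and it sits exactly where you place \emph{the principal obstacle}: you reduce the theorem to the triviality of the cover of $\langle 5^4,8\rangle$ and $\langle 5^4,7\rangle$, i.e.\ to the nonexistence of a non-metabelian pro-$5$ group having these groups as metabelianization, and then you do not prove it --- you only announce that a $p$-group-generation computation restricted to $\sigma$-groups of degree $4$ is \emph{expected} to confirm it. Nothing in your text excludes a tower group of derived length $\ge 3$ above these two groups, so the equalities $\mathrm{G}_5^{(\infty)}M=\mathrm{G}_5^{(2)}M$ in (2) and (3) remain unestablished; the Shafarevich bound $d_2\le d_1+r=5$ cannot close this gap, since it only certifies that the metabelian candidates are \emph{admissible} as tower groups, never that larger ones are excluded.

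The missing ingredient --- and the one the paper invokes, uniformly for all three parts --- is Blackburn's theorem \cite{Bl} on $p$-groups whose derived group has two generators: for odd $p$, such a group is metabelian. In all three cases $G'\in\lbrace (1),(1^2)\rbrace$ is elementary abelian of rank at most $2$, so exactly as in your part (1) one gets $\Phi(\widetilde G')=\widetilde G''$, whence $d(\widetilde G')=d(\widetilde G'/\widetilde G'')=d(G')\le 2$; Blackburn's theorem applied to the finite quotients of $\widetilde G$ then forces $\widetilde G''=1$. Consequently the cover of each of $\langle 5^3,3\rangle$, $\langle 5^4,7\rangle$, $\langle 5^4,8\rangle$ is trivial \emph{unconditionally} --- no restriction to $\sigma$-groups of degree $4$ and no computer search is needed --- and your concern about the capability of the mainline group $\langle 5^4,7\rangle$ is immaterial, because a proper descendant of $\langle 5^4,7\rangle$ never has $\langle 5^4,7\rangle$ as its metabelianization. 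Replacing your cover discussion by this citation (your part (1) argument being precisely its cyclic special case) yields the paper's proof: Blackburn for $\ell_5 M=2$, plus the Shafarevich inequality as the consistency check that relation rank $3$, resp.\ $4$, is permissible for a totally real field of unit rank $3$.
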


\begin{proof}
In each case, the length of the \(5\)-class tower of \(M\) is given by \(\ell_5(M)=2\),
according to Blackburn
\cite{Bl},
since \(G:=\mathrm{G}_5^{(2)}{M}\) is a \(\sigma\)-group
with at most two-generated commutator subgroup \(G^\prime\in\lbrace 1,1^2\rbrace\).
The presentation of \(G\) is not balanced, since the relation rank \(d_2(G)\in\lbrace 3,4\rbrace\) is too big.
However, the Shafarevich Theorem
\cite{Sh}
in its corrected version
\cite[Th.\,5.1, p.\,28]{Ma10}
ensures that
\(d_2(G)\le d_1(G)+r=5\)
does not exceed the admissible upper bound,
since the generator rank of \(G\) and the torsion-free Dirichlet unit rank of \(M\) with signature \((4,0)\)
are given by \(d_1(G)=\varrho_5(M)=2\) and \(r=4+0-1=3\).
\end{proof}

Examples for Case (1) are \(\mathbf{56}\) (about \(\mathbf{60}\%\)) imaginary quadratic fields \({k}_1\) starting with the discriminants \\
\(d\in\lbrace -12883,\ -13147,\ -14339,\ -23336,\ -23732,\ -26743,\ -28696,\ -35067, \\
\mbox{} \hskip 7.7cm \ -35839, \ -38984,\ -47172,\ \ldots\rbrace\).

Examples for Case (2) are \(\mathbf{23}\) (about \(\mathbf{25}\%\)) imaginary quadratic fields \(k_1\) starting with the discriminants \\
\(d\in\lbrace -27528,\ -27939,\ -39947,\ -40823,\ -54347,\ -75892,\ -91127,\ -99428, \\
\mbox{} \hskip 7.7cm \ -101784,\ -105431,\ -114679,\ \ldots\rbrace\).

Examples for Case (3) are \(\mathbf{8}\) (about \(\mathbf{9}\%\)) imaginary quadratic fields \({k}_1\) with the following discriminants \\
\(d\in\lbrace -15419,\ -16724,\ -31103,\ -42899,\ -67128,\ -70763,\ -105784,\ -194487\rbrace\).

%
%
%
%

\begin{theorem}[{\sf Two-stage tower of \(5\)-class fields with Schur \(\sigma\)-group}]
\label{thm:SchurSigmaReal}
If the \(5\)-dual field \({M}\) of \({k}_1\) has
\(5\)-principalization type \(\varkappa({M})=(124563)\), a \(4\)-cycle and two fixed points,
then the abelian type invariants of \(E_1,\ldots,E_6\) are \(\tau({M})=\lbrack (1^3)^2,(21)^4\rbrack\)
and the \(5\)-class tower group is
\(\mathrm{G}_5^{(\infty)}{{M}}=\mathrm{G}_5^{(2)}{{M}}\simeq\langle 5^5,\mathbf{11}\rangle\).
In this case, the length of the \(5\)-class tower of \({M}\) is given by \(\ell_5(M)=2\),
and \(G:=\mathrm{G}_5^{(2)}{{M}}\) is a Schur \(\sigma\)-group with balanced presentation,
that is, relation rank \(d_2(G)=d_1(G)=\varrho_5({M})=2\). \\
\end{theorem}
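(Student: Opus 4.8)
The plan is to transport the argument of Theorem \ref{thm:SchurSigmaImag}(1) to the totally real case \(d<0\), identifying \(G:=\mathrm{G}_5^{(2)}{M}\) from the Galois action together with its transfer kernel type and then invoking the Schur \(\sigma\)-group property to terminate the tower. Since \(M\) is a cyclic quartic field, Theorem \ref{thm:CycQuart}(i) shows that \(G\) is a \(\sigma\)-group of degree \(4\); together with \(\mathrm{Cl}_5(M)\simeq(5,5)\) this gives \(G/G^\prime\simeq(5,5)\) and generator rank \(d_1(G)=\varrho_5(M)=2\). By Theorem \ref{thm:Sigma5GroupsDegree4}, \(G\) is therefore of coclass \(1\), or one of the Schur \(\sigma\)-groups \(\langle 3125,i\rangle\) with \(i\in\{11,14\}\), or a descendant of a capable group \(\langle 3125,i\rangle\) with \(i\in\{3,4,5,6,7\}\).

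First I would single out \(G\) from this list using the hypothesis \(\varkappa(M)=(124563)\). Comparing with the \(5\)-principalization column of Table \ref{tbl:TrfKerStem5Icl6}, the unique stem group of \(\Phi_6\) whose capitulation type is a \(4\)-cycle with two fixed points is \(\langle 3125,11\rangle=\Phi_6(2^21)_{b_1}\), after relabelling the six extensions \(E_1,\dots,E_6\) so that the tabulated cycle pattern \((1)(2)(3564)\) is carried to \((1)(2)(3456)\). The coclass-\(1\) groups and \(\langle 3125,14\rangle\) are excluded by their distinct kernel types. To discard the proper descendants of the capable groups I would combine the capitulation data with the class number relations: since \((124563)\) contains no total capitulation, \(\#\ker(j_{E_i/M})=5\) and hence \(b_i=1\) for every \(i\), so Proposition \ref{prp:UnitIndicesReal}(1) gives \(u_i=1\) and part (2) yields \(\mathrm{h}_5(E_i)=5\cdot\mathrm{h}_5(L_i)^2\). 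Thus every \(v_5(\mathrm{h}_5(E_i))\) is odd, exactly as in the proof of Theorem \ref{thm:NonAbelianImag}; this parity obstruction together with the matching of \(\varkappa\) leaves \(\langle 3125,11\rangle\) as the only admissible group and fixes \(\lvert G\rvert=5^5\).

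With \(G\simeq\langle 3125,11\rangle\) identified, I would read the transfer target type off the maximal subgroups: the two fixed by \(\varkappa\) have commutator quotient of type \((1^3)\) and the four permuted in the \(4\)-cycle have type \((21)\), so \(\tau(M)=\lbrack(1^3)^2,(21)^4\rbrack\). Finally, because \(\langle 3125,11\rangle\) is a Schur \(\sigma\)-group with balanced presentation \(d_2(G)=d_1(G)=\varrho_5(M)=2\), the Shafarevich theorem in its corrected form \cite[Th.\,5.1, p.\,28]{Ma10} makes it admissible as the full pro-\(5\) tower group \(\mathrm{G}_5^{(\infty)}{M}\); being metabelian it already equals \(\mathrm{G}_5^{(2)}{M}\), whence \(\ell_5(M)=2\). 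The hard part, I expect, is the uniqueness of the identification, that is, rigorously excluding every descendant of the capable groups sharing the capitulation type \((124563)\); this ultimately relies on the parity bound \(v_5(\mathrm{h}_5(E_i))\) odd forcing \(\lvert G\rvert=5^5\) and on the Magma classification underlying Theorem \ref{thm:Sigma5GroupsDegree4}.
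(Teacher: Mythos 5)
Your overall architecture coincides with the paper's. The paper's proof of this theorem is literally \textquotedblleft similar to the proof of Theorem \ref{thm:SchurSigmaImag}\textquotedblright, whose entire content is your final step: once \(G:=\mathrm{G}_5^{(2)}{M}\simeq\langle 5^5,11\rangle\) is accepted, \(\ell_5(M)=2\) follows because a Schur \(\sigma\)-group has balanced presentation \(d_2(G)=d_1(G)=\varrho_5(M)=2\) (trivial cover), so the tower group coincides with its metabelianization. Your preliminary reduction via Theorem \ref{thm:CycQuart}, Theorem \ref{thm:Sigma5GroupsDegree4} and the matching of the \(4\)-cycle against Table \ref{tbl:TrfKerStem5Icl6} is also the mechanism the paper has in mind (cf. \S\ref{ss:ConfirmedGaloisAction}).

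The genuine gap is in your exclusion of proper descendants of the capable stem groups, and your parity argument does not close it. The parity conclusion (every \(v_5(\mathrm{h}_5(E_i))\) odd, via Proposition \ref{prp:UnitIndicesReal}) is correct under the hypothesis that \(\varkappa(M)=(124563)\) has no total capitulation, but it does \emph{not} force \(\lvert G\rvert=5^5\): the descendants \(\langle 5^7,885\rangle\) and \(\langle 5^7,891\rangle\), \(\langle 5^7,894\rangle\), \(\langle 5^7,897\rangle\) of \(\langle 5^5,3\rangle\), which actually occur as second \(5\)-class groups in Tables \ref{tbl:CycQrtFld1}--\ref{tbl:CycQrtFld2}, have constant kernel type (hence no total capitulation, so again \(b_i=u_i=1\)) and transfer targets \((1^5)\) resp. \((21^3)\) of odd \(5\)-valuation \(5\); they pass your parity test perfectly. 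What excludes such groups is solely that their transfer kernel type is not a \(4\)-cycle. For descendants of \(\langle 5^5,4\rangle\), \(\langle 5^5,5\rangle\), \(\langle 5^5,6\rangle\), \(\langle 5^5,7\rangle\) one can argue from the fact that kernels only grow when passing to the parent quotient, so their types stay of the forms \((x22222)\), \((x11111)\), two \(2\)-cycles; but for descendants of \(\langle 5^5,3\rangle\), all of whose kernels are total, this monotonicity gives no constraint at all, and ruling out a \(4\)-cycle type among them is exactly the point the paper settles only by the MAGMA verification reported in \S\ref{ss:ConfirmedGaloisAction} and by the tables---not by any class-number parity. Your fallback on \textquotedblleft the Magma classification underlying Theorem \ref{thm:Sigma5GroupsDegree4}\textquotedblright\ does not repair this, since that theorem only produces the candidate list (descendants included) and says nothing about their kernel types. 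In fairness, the paper's own one-line proof delegates the same identification step to computation rather than proving it.
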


\begin{proof}
Similar to the proof of Theorem
\ref{thm:SchurSigmaImag}.
\end{proof}

The \textbf{unique} example is the imaginary quadratic field \({k}_1\) with discriminant
\(d=-114303\).

%
%
%
%

\begin{theorem}[{\sf Single-stage towers of \(5\)-class fields with abelian group}]
\label{thm:AbelianReal}
For the \(\mathbf{5}\) (about \(\mathbf{5}\%\)) imaginary quadratic fields \({k}_1\) with discriminants
$$d\in\lbrace -58424,\ -115912,\ -148507,\\ \ -151879,\ -154408\rbrace,$$
the \(5\)-dual field \({M}\) of \({k}_1\) has
\(5\)-principalization type \(\varkappa({M})=(000000)\), a constant with six total capitulations,
the abelian type invariants of \(E_1,\ldots,E_6\) are  \(\tau({M})=\lbrack (1)^6\rbrack\),
and the \(5\)-class tower is abelian with group
\(\mathrm{G}_5^{(\infty)}{{M}}=\mathrm{G}_5^{(1)}{{M}}\simeq\langle 5^2,\mathbf{2}\rangle\) and length \(\ell_5(M)=1\).
\end{theorem}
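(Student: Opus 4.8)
The plan is to dispose of the group-theoretic content first and to treat the numerical list as a verification step. The starting point is the same tautology that underlies the first sentence of Theorem~\ref{thm:ClassNumbersImag}: for any \(M\) with \(\mathrm{Cl}_5(M)\simeq(5,5)\) one has \(\ell_5(M)=1\) precisely when \(M_5^{(1)}=M_5^{(\infty)}\), and this is equivalent to \(\widehat{G}:=\mathrm{G}_5^{(\infty)}{M}=\mathrm{G}_5^{(1)}{M}\simeq\mathrm{Cl}_5(M)=\langle 25,2\rangle\) being abelian. Hence everything reduces to showing that, for each of the five listed discriminants, the full \(5\)-class tower group \(\widehat{G}\) is abelian.

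The arithmetic input I would use is the computed Artin pattern \(\varkappa(M)=(000000)\) and \(\tau(M)=\lbrack (1)^6\rbrack\), recorded for these fields in Tables~\ref{tbl:CycQrtFld3}, \ref{tbl:CycQrtFld4} and \ref{tbl:CycQrtFld5}. The decisive piece is the target type: \(\tau(M)=\lbrack (1)^6\rbrack\) says that each of the six unramified cyclic quintic extensions satisfies \(\mathrm{Cl}_5(E_i)\simeq C_5\), i.e. \(\mathrm{h}_5(E_i)=5\) for all \(1\le i\le 6\). As a cross-check, feeding \(\mathrm{h}_5(E_i)=5\) into part~(2) of Proposition~\ref{prp:UnitIndicesReal} is compatible only with the branch \(b_i=u_i\), since the alternative would force \(5=\mathrm{h}_5(L_i)^2\); consequently \(\mathrm{h}_5(L_i)=1\) and the capitulation is total, in accordance with \(\varkappa(M)=(000000)\).

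The main structural step is then immediate. Because \(E_i/M\) is unramified and \((E_i)_5^{(1)}/E_i\) is unramified pro-\(5\), the Hilbert \(5\)-class field of \(E_i\) is contained in \(M_5^{(\infty)}\), so Artin reciprocity identifies \(\mathrm{Cl}_5(E_i)\) with \(H_i/H_i'\), where \(H_i\le\widehat{G}\) is the index-\(5\) subgroup fixing \(E_i\). Now \(H_i/H_i'\simeq C_5\) is cyclic, and a pro-\(5\)-group with cyclic abelianization is procyclic by the Burnside basis theorem; hence \(H_i\simeq C_5\) and \(H_i'=1\). This gives \(\lvert\widehat{G}\rvert=[\widehat{G}:H_i]\cdot\lvert H_i\rvert=25\), so \(\widehat{G}=\widehat{G}/\widehat{G}'\simeq\mathrm{Cl}_5(M)=\langle 25,2\rangle\) is abelian. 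By the first paragraph this yields \(\mathrm{G}_5^{(\infty)}{M}=\mathrm{G}_5^{(1)}{M}\simeq\langle 25,2\rangle\) and \(\ell_5(M)=1\), and it reproduces \(\varkappa(M)=(000000)\) because the transfer of the abelian group \((5,5)\) into any index-\(5\) subgroup is the fifth-power map, which is trivial on a group of exponent \(5\).

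The genuine obstacle is not the group theory, which collapses once the target type is known, but the establishment of \(\tau(M)=\lbrack (1)^6\rbrack\) itself for each of the five specific fields: one must certify that every \(\mathrm{Cl}_5(E_i)\) has order exactly \(5\) rather than a larger \(5\)-group. This I would carry out computationally, via the class-number formula of Theorem~\ref{thm:ClassNumbers} together with the Magma computation underlying Tables~\ref{tbl:CycQrtFld3}--\ref{tbl:CycQrtFld5}. Here the dichotomy is clean: any non-abelian \(\widehat{G}\) would force \(\mathrm{Cl}_5(E_i)\) to have order at least \(25\) (for \(\lvert\widehat{G}\rvert=5^3\) this is exactly Proposition~\ref{prp:NonAbelianReal}, which gives \(\tau=\lbrack(1^2)^6\rbrack\)), so the numerical check has only to confirm order \(5\) and thereby exclude this single alternative.
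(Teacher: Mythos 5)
Your proposal is correct, but it follows a genuinely different route than the paper. The paper's proof (by reference to Theorem \ref{thm:AbelianImag}) takes as numerical input the identification of the second \(5\)-class group itself, \(\mathrm{G}_5^{(2)}{M}\simeq\langle 25,2\rangle\), from Tables \ref{tbl:CycQrtFld3}--\ref{tbl:CycQrtFld5}, and then applies the equivalence of Theorem \ref{thm:ClassNumbersImag} (second \(5\)-class group abelian \(\Longleftrightarrow\) \(\ell_5(M)=1\)), whose content is that \(M_5^{(2)}=M_5^{(1)}\) terminates the tower; the statements on \(\mathrm{Cl}_5(E_i)\), \(\mathrm{Cl}_5(L_i)\) and \(\varkappa(M)\) then come from the class number formula of Theorem \ref{thm:ClassNumbers}. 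You instead take as input only the target type \(\tau(M)=\lbrack(1)^6\rbrack\) and convert it into a statement about the full tower group \(\widehat{G}=\mathrm{G}_5^{(\infty)}{M}\): each maximal open subgroup \(H_i\) satisfies \(H_i/H_i'\simeq\mathrm{Cl}_5(E_i)\simeq C_5\) (legitimate, since \(M_5^{(\infty)}\) is also the maximal unramified pro-\(5\) extension of each \(E_i\)), the Burnside basis theorem forces \(H_i\) to be procyclic, hence \(H_i\simeq C_5\), hence \(\lvert\widehat{G}\rvert=25\) and \(\widehat{G}\) is abelian; the type \(\varkappa(M)=(000000)\) then falls out because the transfer of an exponent-\(5\) abelian group into an index-\(5\) subgroup is the fifth-power map. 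Your decomposition buys a lighter certification burden (only the six class numbers \(\mathrm{h}_5(E_i)=5\) must be computed, not the isomorphism type of \(\mathrm{G}_5^{(2)}{M}\)) and argues directly about \(\mathrm{G}_5^{(\infty)}\); the paper's route stays inside its uniform framework and additionally produces the unit-norm indices \(u_i\) and \(\mathrm{h}_5(L_i)=1\). One caveat: in your ``cross-check'' paragraph, the inference from \(b_i=u_i\) and \(\mathrm{h}_5(L_i)=1\) to total capitulation is not warranted at that stage, since partial capitulation with \(b_i=u_i=1\) is equally consistent with Proposition \ref{prp:UnitIndicesReal}; total capitulation only follows from your main argument once \(\widehat{G}\) is known to be abelian, so that remark should be read as a consistency check against the tables rather than a derivation.
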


\begin{proof}
Similar to the proof of Theorem
\ref{thm:AbelianImag}.
\end{proof}

%
%
%
%

\begin{theorem}[{\sf Frobenius and non-Galois extensions}]
\label{thm:FrobeniusReal}
The properties of the absolute extensions \({E}_i/\mathbb{Q}\)
and the values of the invariants in the Quintic Reflection Theorem, Table
\ref{tbl:5RankCasesReal},
and Proposition
\ref{prp:FrobeniusFieldsReal},
for the \(\mathbf{93}\) cases in Proposition
\ref{prp:CountReal}
are the following ones:

$(1)$
For the \(\mathbf{5}\) cases with \(\ell_5(M)=1\) in Theorem
\ref{thm:AbelianReal},
we have
\[
(r_1,r_2,\delta_1,\delta_2)=(2,0,0,0),
\text{ and }\mathrm{Gal}({E}_i/\mathbb{Q})\simeq F_{5,2}\text{ for }1\le i\le 6\;(\text{Case } (\mathrm{a})).
\]

($2)
$For the other \(\mathbf{88}\) cases,
including the \(\mathbf{87}\) cases of \(\ell_5(M)=2\) in Theorem
\ref{thm:LowSigmaReal},
and the \textbf{unique} case of \(\ell_5(M)=2\) in Theorem
\ref{thm:SchurSigmaReal},
we have
pairwise conjugate non-Galois extensions 
$${E}_3\simeq {E}_4, {E}_5\simeq {E}_6,
\mathrm{Gal}({E}_1/ \mathbb{Q})\simeq F_{5,2},\ \mathrm{Gal}({E}_2/ \mathbb{Q})\simeq F_{5,3},$$
  and 
$$
\begin{cases}
(r_1,r_2,\delta_1,\delta_2)=(2,1,1,0),\text{ for }  d\in\lbrace -39947,\ -64103,\ -67128,\ -104503,\ -119191\rbrace\;(\text{Case } (\mathrm{d})),\\
(r_1,r_2,\delta_1,\delta_2)=(1,2,0,1),\text{ for }  d\in\lbrace -110479,\ -199735\rbrace\;(\text{Case } (\mathrm{e})),\\
(r_1,r_2,\delta_1,\delta_2)=(1,1,0,0),\text{ otherwise}\;(\text{Case } (\mathrm{c})).
\end{cases}
$$
 
\end{theorem}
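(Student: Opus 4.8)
The plan is to reduce everything to the structural dichotomy already established in Proposition~\ref{prp:FrobeniusFieldsReal} and then to pin down, for each of the $\mathbf{93}$ discriminants, which of the five cases (a)--(e) of Table~\ref{tbl:5RankCasesReal} actually occurs. The key observation is that Proposition~\ref{prp:FrobeniusFieldsReal} already expresses the Galois resp.\ non-Galois behaviour of the six unramified cyclic quintic extensions $E_1,\ldots,E_6$ purely in terms of this case distinction: in case~(a) all six $E_i$ are normal with $\mathrm{Gal}(E_i/\mathbb{Q})\simeq F_{5,2}$, whereas in each of the cases (c), (d), (e) exactly two of them are normal, with non-isomorphic groups $F_{5,2}$ and $F_{5,3}$, and the remaining four form two conjugate pairs $E_3\simeq E_4$ and $E_5\simeq E_6$. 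Hence the entire content of the present theorem is the determination of the reflection invariants $(r_1,r_2,\delta_1,\delta_2)$ and the subsequent bookkeeping, exactly as in the imaginary analogue Theorem~\ref{thm:FrobeniusImag}.

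First I would compute, for every $d$ in the range $-2\cdot 10^5<d<0$ with $\gcd(5,d)=1$ and $\mathrm{Cl}_5(M)\simeq(5,5)$, the $5$-class ranks $r_1=\varrho_5(k_1)$ and $r_2=\varrho_5(k_2)$ of the two imaginary quadratic fields $k_1=\mathbb{Q}(\sqrt{d})$ and $k_2=\mathbb{Q}(\sqrt{5d})$. Since both $k_1$ and $k_2$ are imaginary quadratic, their unit groups are finite of order prime to $5$, hence $\dim_{\mathbb{F}_5}(U/U^5)=0$ and the $5$-Selmer ranks coincide with the $5$-class ranks, $\sigma_5(k_i)=\varrho_5(k_i)=r_i$. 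Together with the fixed value $\varrho_5(M)=2$, the quintic reflection theorem \eqref{eqn:5Reflection} then reads $2=r_1+r_2-\delta_1-\delta_2$, and evaluating the $5$-primarity criteria of Kishi for the $5$-virtual units of $k_1$ and $k_2$ fixes $\delta_1,\delta_2\in\{0,1\}$ uniquely. Comparing the resulting quadruple with Table~\ref{tbl:5RankCasesReal} selects the case; empirically the range produces only the cases (a), (c), (d), (e), never (b).

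With the case known, part (1) follows at once: the $\mathbf{5}$ abelian fields isolated in Theorem~\ref{thm:AbelianReal} are precisely those with $(r_1,r_2,\delta_1,\delta_2)=(2,0,0,0)$, i.e.\ case~(a), so all six $E_i$ are $F_{5,2}$-extensions by Proposition~\ref{prp:FrobeniusFieldsReal}. For part (2), the remaining $\mathbf{88}$ discriminants, namely the $\mathbf{87}$ two-stage towers of Theorem~\ref{thm:LowSigmaReal} together with the single Schur $\sigma$-group case of Theorem~\ref{thm:SchurSigmaReal}, all fall into cases (c), (d), (e), which yield the conjugate-pair pattern $E_3\simeq E_4$, $E_5\simeq E_6$ with $\mathrm{Gal}(E_1/\mathbb{Q})\simeq F_{5,2}$ and $\mathrm{Gal}(E_2/\mathbb{Q})\simeq F_{5,3}$. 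Reading the computed quadruples off the tables then sorts these $88$ discriminants into the three sublists: $(2,1,1,0)$ for case~(d), $(1,2,0,1)$ for case~(e), and $(1,1,0,0)$ for the majority in case~(c).

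The genuinely laborious part is not the structural deduction, which is immediate from Proposition~\ref{prp:FrobeniusFieldsReal}, but the arithmetic determination of $\delta_1$ and $\delta_2$. These invariants encode whether the relevant $5$-virtual units of $k_1$ resp.\ $k_2$ are $5$-primary, a condition that must be checked field by field with computer algebra, following Kishi's construction of the polynomials $f(X,\gamma)$ and their splitting fields $K_\gamma$. I therefore expect the main effort to lie in the reliable large-scale computation underlying Tables~\ref{tbl:CycQrtFld3}, \ref{tbl:CycQrtFld4} and \ref{tbl:CycQrtFld5}; once these invariants are tabulated, the assertions of the theorem reduce to collation.
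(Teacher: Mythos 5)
Your proposal is correct and takes essentially the same approach as the paper: the paper's proof is precisely the appeal to the computed data in Tables \ref{tbl:CycQrtFld3}, \ref{tbl:CycQrtFld4} and \ref{tbl:CycQrtFld5}, with the structural dichotomy (all six \(E_i\) Frobenius in case (a) versus two Frobenius extensions plus two conjugate non-Galois pairs in cases (c), (d), (e)) already supplied by Proposition \ref{prp:FrobeniusFieldsReal} and Table \ref{tbl:5RankCasesReal}. Your added observations --- that \(\sigma_5(k_i)=\varrho_5(k_i)\) for the imaginary quadratic fields \(k_1,k_2\) when \(d<0\), and that the substantive work is the field-by-field computation of \((r_1,r_2,\delta_1,\delta_2)\) via Kishi's primarity criteria --- simply make explicit what the paper leaves implicit in its tabulated data.
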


\begin{proof}
See Tables
\ref{tbl:CycQrtFld3},
\ref{tbl:CycQrtFld4}
and
\ref{tbl:CycQrtFld5}.
\end{proof}

%
%
%
%
%
%
%
%
%
%
%
%
%
%
%

\bigskip
\noindent
Figure
\ref{fig:5MirrMinDisc}
visualizes the relevant part of the descendant tree of finite \(5\)-groups,
beginning at the abelian root \(C_5\times C_5=\langle 5^2,2\rangle\),
on which the second \(5\)-class groups \(\mathrm{G}_5^{(2)}{M}\)
of the fields \mbox{\(M=\mathbb{Q}\left((\zeta_5-\zeta_5^{-1})\sqrt{d}\right)\)}
are located as vertices.
The figure is a modification of the diagram in
\cite[Fig.\,3.8, p.\,448]{Ma4}.
The minimal positive, resp. maximal negative, discriminants \(d\)
are indicated by underlined boldface integers
adjacent to the oval surrounding the vertex realized by \(\mathrm{G}_5^{(2)}{M}\).
The identifiers are due to the packages
\cite{BEO2,GNO}
which are implemented in
\cite{MAGMA}.
(For trees, see
\cite{Ma6}.)

\begin{figure}[ht]
\caption{{\sf Tree position of second \(5\)-class groups \(\mathrm{G}_5^{(2)}{M}\) of the fields \(M=\mathbb{Q}\left((\zeta_5-\zeta_5^{-1})\sqrt{d}\right)\)}}
\label{fig:5MirrMinDisc}


\setlength{\unitlength}{0.9cm}
\begin{picture}(16,17)(-1,-14)

{\small


\put(0,2.5){\makebox(0,0)[cb]{Order \(5^n\)}}
\put(0,2){\line(0,-1){14}}
\multiput(-0.1,2)(0,-2){8}{\line(1,0){0.2}}
\put(-0.2,2){\makebox(0,0)[rc]{\(25\)}}
\put(0.2,2){\makebox(0,0)[lc]{\(5^2\)}}
\put(-0.2,0){\makebox(0,0)[rc]{\(125\)}}
\put(0.2,0){\makebox(0,0)[lc]{\(5^3\)}}
\put(-0.2,-2){\makebox(0,0)[rc]{\(625\)}}
\put(0.2,-2){\makebox(0,0)[lc]{\(5^4\)}}
\put(-0.2,-4){\makebox(0,0)[rc]{\(3\,125\)}}
\put(0.2,-4){\makebox(0,0)[lc]{\(5^5\)}}
\put(-0.2,-6){\makebox(0,0)[rc]{\(15\,625\)}}
\put(0.2,-6){\makebox(0,0)[lc]{\(5^6\)}}
\put(-0.2,-8){\makebox(0,0)[rc]{\(78\,125\)}}
\put(0.2,-8){\makebox(0,0)[lc]{\(5^7\)}}
\put(-0.2,-10){\makebox(0,0)[rc]{\(390\,625\)}}
\put(0.2,-10){\makebox(0,0)[lc]{\(5^8\)}}
\put(-0.2,-12){\makebox(0,0)[rc]{\(1\,953\,125\)}}
\put(0.2,-12){\makebox(0,0)[lc]{\(5^9\)}}
\put(0,-12){\vector(0,-1){2}}


\put(2.8,2.2){\makebox(0,0)[rc]{\(C_5\times C_5\)}}
\put(2.9,1.9){\framebox(0.2,0.2){}}
\put(3.2,2.2){\makebox(0,0)[lc]{\(\langle 2\rangle\)}}

\put(3,2){\line(0,-1){2}}
\put(3,0){\circle{0.2}}
\put(3.2,0.2){\makebox(0,0)[lc]{\(\langle 3\rangle\)}}


\put(3,0){\line(0,-1){2}}
\put(3,-2){\circle{0.2}}
\put(3.2,-1.8){\makebox(0,0)[lc]{\(\langle 7\rangle\)}}

\put(3,0){\line(-1,-2){1}}
\put(2,-2){\circle{0.2}}
\put(1.8,-1.8){\makebox(0,0)[rc]{\(\langle 8\rangle\)}}

\put(3,-2){\vector(0,-1){2}}
\put(3,-4.1){\makebox(0,0)[ct]{\(\mathcal{T}^1(\langle 5^2,2\rangle)\)}}


\put(3,0){\line(1,-2){2}}
\put(3,0){\line(3,-4){3}}
\put(3,0){\line(1,-1){4}}

\put(3,0){\line(2,-1){8}}
\put(3,0){\line(5,-2){10}}

\put(8,-0.5){\makebox(0,0)[lc]{Edges of depth \(2\) forming}}
\put(8,-1){\makebox(0,0)[lc]{the interface between}}
\put(8,-1.5){\makebox(0,0)[lc]{coclass \(1\) and coclass \(2\)}}


\put(8.5,-3.1){\makebox(0,0)[cc]{Hall's isoclinism family}}
\put(8.5,-3.6){\makebox(0,0)[cc]{\(\Phi_6\)}}

\multiput(5,-4)(1,0){3}{\circle*{0.2}}
\put(5,-4.1){\makebox(0,0)[rt]{\(\langle 14\rangle\)}}
\put(6,-4.1){\makebox(0,0)[rt]{\(\langle 11\rangle\)}}
\put(7,-4.1){\makebox(0,0)[rt]{\(\langle 7\rangle\)}}

\multiput(11,-4)(2,0){2}{\circle*{0.2}}
\put(11,-4.1){\makebox(0,0)[rt]{\(\langle 4\rangle\)}}
\put(13,-4.1){\makebox(0,0)[rt]{\(\langle 3\rangle\)}}


\put(7,-4){\line(0,-1){2}}
\put(7,-6){\circle*{0.1}}
\put(7,-6.1){\makebox(0,0)[rt]{\(6*\)}}
\put(6.9,-5.9){\makebox(0,0)[rc]{\(\langle 674\rangle\)}}

\multiput(11,-4)(2,0){2}{\line(0,-1){2}}
\multiput(11,-6)(2,0){2}{\circle*{0.2}}

\put(13,-4){\line(1,-2){1}}
\put(14,-6){\circle*{0.1}}
\put(14,-6.1){\makebox(0,0)[lt]{\(*2\)}}



\put(10.9,-5.9){\makebox(0,0)[rc]{\(\langle 564\rangle\)}}
\put(11,-8){\vector(0,-1){2}}
\put(11,-10.1){\makebox(0,0)[ct]{\(\mathcal{T}^2(\langle 5^5,4\rangle)\)}}

\put(11,-6){\line(-1,-2){1}}
\put(9.9,-8.3){\makebox(0,0)[cc]{\(\langle 885\rangle\)}}
\put(11,-6){\line(1,-2){1}}
\put(12.1,-7.0){\makebox(0,0)[cc]{\(\langle 891\rangle\)}}
\put(12.1,-7.3){\makebox(0,0)[cc]{\(\langle 894\rangle\)}}
\put(12.1,-7.6){\makebox(0,0)[cc]{\(\langle 897\rangle\)}}
\put(12,-8.1){\makebox(0,0)[lt]{\(*3\)}}

\multiput(11,-6)(2,0){2}{\line(0,-1){2}}
\multiput(10,-8)(1,0){4}{\circle*{0.2}}


\put(12.9,-5.9){\makebox(0,0)[rc]{\(\langle 555\rangle\)}}
\put(14.1,-5.4){\makebox(0,0)[lc]{\(\langle 669\rangle\)}}
\put(14.1,-5.9){\makebox(0,0)[lc]{\(\langle 672\rangle\)}}
\put(13,-8){\vector(0,-1){2}}
\put(13,-10.1){\makebox(0,0)[ct]{\(\mathcal{T}^2(\langle 5^5,3\rangle)\)}}

\put(13,-4){\line(1,-4){1}}
\put(14,-8){\circle*{0.2}}
\put(14.1,-7.9){\makebox(0,0)[lc]{\(\langle 115\rangle\)}}

\put(14,-8){\line(1,-4){1}}
\put(15,-12){\circle*{0.2}}
\put(15.1,-11.0){\makebox(0,0)[cc]{\(\#2;1345\)}}
\put(15.1,-11.3){\makebox(0,0)[cc]{\(\#2;1384\)}}
\put(15.1,-11.6){\makebox(0,0)[cc]{\(\#2;1420\)}}
\put(15,-12.1){\makebox(0,0)[lt]{\(*3\)}}


\put(4,-7){\makebox(0,0)[cc]{\(\varkappa=\)}}
\put(5,-7.5){\makebox(0,0)[rc]{\((123456)\)}}
\put(5,-8){\makebox(0,0)[rc]{identity}}
\put(6,-7.5){\makebox(0,0)[cc]{\((124563)\)}}
\put(6,-8){\makebox(0,0)[cc]{\(4\)-cycle}}
\put(7,-7.5){\makebox(0,0)[lc]{\((124365)\)}}
\put(7,-8){\makebox(0,0)[lc]{two \(2\)-cycles}}

\put(3.3,-8.2){\framebox(6,1.5){}}


\put(1.8,-5.25){\makebox(0,0)[cc]{\(\varkappa=\)}}
\put(1.8,-5.75){\makebox(0,0)[cc]{\((100000)\)}}
\put(1.8,-6.25){\makebox(0,0)[cc]{a.2}}
\put(3.2,-5.75){\makebox(0,0)[cc]{\((000000)\)}}
\put(3.2,-6.25){\makebox(0,0)[cc]{a.1}}
\put(0.8,-6.6){\framebox(3.4,1.5){}}

\put(11,-13.25){\makebox(0,0)[cc]{\(\varkappa=\)}}
\put(11,-13.75){\makebox(0,0)[cc]{\((022222)\)}}
\put(11,-14.25){\makebox(0,0)[cc]{nearly const.}}
\put(13,-13.75){\makebox(0,0)[cc]{\((000000)\)}}
\put(13,-14.25){\makebox(0,0)[cc]{constant}}
\put(15,-13.75){\makebox(0,0)[cc]{\((002001)\)}}
\put(15,-14.25){\makebox(0,0)[cc]{coclass \(4\)}}
\put(9.8,-14.5){\framebox(6,1.5){}}


\put(4,2.2){\makebox(0,0)[lc]{\(\underbar{\textbf{+4357}}\)}}
\put(3.3,2.1){\oval(1,0.8)}
\put(4,1.7){\makebox(0,0)[lc]{\(\underbar{\textbf{-58424}}\)}}

\put(3.3,0.1){\oval(1,0.8)}
\put(4,0){\makebox(0,0)[lc]{\(\underbar{\textbf{-12883}}\)}}

\put(1.8,-1.9){\oval(1,0.8)}
\put(2,-2.7){\makebox(0,0)[rc]{\(\underbar{\textbf{-27528}}\)}}
\put(3.3,-1.9){\oval(1,0.8)}
\put(3,-2.7){\makebox(0,0)[cc]{\(\underbar{\textbf{-15419}}\)}}

\put(4.7,-4.2){\oval(1,0.8)}
\put(4.6,-4.9){\makebox(0,0)[cc]{\(\underbar{\textbf{+581}}\)}}
\put(5.7,-4.2){\oval(1,0.8)}
\put(5.9,-4.9){\makebox(0,0)[cc]{\(\underbar{\textbf{+457}}\)}}
\put(5.9,-5.4){\makebox(0,0)[cc]{\(\underbar{\textbf{-114303}}\)}}

\put(6.9,-4.2){\oval(1,0.8)}
\put(7.1,-4.9){\makebox(0,0)[lc]{\(\underbar{\textbf{+508}}\)}}

\put(10.9,-4.2){\oval(1,0.8)}
\put(11.1,-4.9){\makebox(0,0)[lc]{\(\underbar{\textbf{+257}}\)}}
\put(9.9,-8.2){\oval(1,0.8)}
\put(9.9,-8.9){\makebox(0,0)[cc]{\(\underbar{\textbf{+4709}}\)}}
\put(12.1,-7.5){\oval(1,1.8)}
\put(12.1,-8.9){\makebox(0,0)[cc]{\(\underbar{\textbf{+1137}}\)}}

\put(14.4,-5.7){\oval(1.2,1.5)}
\put(14.4,-6.7){\makebox(0,0)[cc]{\(\underbar{\textbf{+4697}}\)}}

\put(15.1,-11.5){\oval(1.7,2.0)}
\put(15.1,-12.7){\makebox(0,0)[cc]{\(\underbar{\textbf{+8689}}\)}}

}

\end{picture}
\end{figure}

%
%
%
%
%
%
%
%
\mbox{} \bigskip
\section{Tables of second \(5\)-class groups \(\mathrm{G}_5^{(2)}{{M}}\)}
\label{s:Tables}

\subsection{Imaginary cyclic quartic fields \(M\) with \(d>0\)}
\label{ss:ImaginaryExamples}

\noindent
Table
\ref{tbl:CycQrtFld1},
resp. Table
\ref{tbl:CycQrtFld2},
shows
the factorized fundamental discriminant \(d\)
of the dual quadratic field \({k}_1\),
the \(5\)-principalization type \(\varkappa=\varkappa({M})\),
the second \(5\)-class group \(\mathrm{G}_5^{(2)}{{M}}\),
the length \(\ell_5{M}\) of the \(5\)-class tower,
the \(5\)-class ranks \(r_1:=\varrho_5({k}_1)\), \(r_2:=\varrho_5({k}_2)\),
the invariants \(\delta_1\), \(\delta_2\),
and the case in Proposition
\ref{prp:FrobeniusFieldsImag}
for the \(37\), resp. \(46\),
cyclic quartic fields \({M}=\mathbb{Q}\left((\zeta_5-\zeta_5^{-1})\sqrt{d}\right)\)
with \(0<d<5000\),
resp. \(5000<d<10000\).

%
%
%
%

For the fields with constant \(5\)-principalization type, consisting of partial kernels,
we have a \textit{polarization} of the target type whose abelian invariants can be either homogeneous \((1^5)\) or inhomogeneous \((21^3)\).
In the inhomogeneous case, there are three possibilities for the second \(5\)-class group,
namely \(\langle 5^7,891\rangle\), \(\langle 5^7,894\rangle\) and \(\langle 5^7,897\rangle\).
In the homogeneous case, the second \(5\)-class group \(\langle 5^7,885\rangle\) is unique.
According to the Shafarevich Theorem
\cite[Th. 6, Eqn. (18\({}^\prime\))]{Sh},
whose misprint we have corrected in
\cite[Th.\,5.1, p.\,28]{Ma10},
these four groups, which possess relation rank \(d_2=4\),
are forbidden as \(5\)-class tower groups for imaginary cyclic quartic fields with unit rank \(1\).
Therefore, the length of the \(5\)-class tower must be \(\ell_5{M}\ge 3\) at least,
and we conjecture a precise three-stage tower \(\ell_5{M}=3\).

The \textit{complete statistics} of the \(83\) \textit{imaginary} cyclic quartic fields \({M}\) with \(0<d<10^4\) is as follows:\\

\begin{tabular}{lll}
{\bf -- }There are \(23\) (about \(28\%)\) cases with \(\mathrm{G}_5^{(2)}{{M}}\simeq\langle 5^5,11\rangle\), the Schur \(\sigma\)-group with transfer \\
$\quad$kernel type a \(4\)-cycle.\\

 {\bf -- }There are 
\(22\) (about \(27\%)\) cases with \(\mathrm{G}_5^{(2)}{{M}}\simeq\langle 5^5,4\rangle\).\\

{\bf -- }There are 
\(16\) (about \(19\%)\) cases with \(\mathrm{G}_5^{(2)}{{M}}\simeq\langle 5^5,7\rangle\).\\

{\bf -- }There are 
\(11\) (about \(13\%)\) cases with \(\mathrm{G}_5^{(2)}{{M}}\simeq\langle 5^5,14\rangle\),  the Schur \(\sigma\)-group with transfer\\
 $\quad$kernel type the identity permutation.\\

{\bf -- }For only \(4\) cases we have \(\mathrm{G}_5^{(2)}{{M}}\simeq\langle 5^7,885\rangle\).\\

{\bf -- }For \(2\) cases \(\mathrm{G}_5^{(2)}{{M}}\simeq\langle 5^7,891\vert 894\vert 897\rangle\).\\

{\bf -- }For \(2\) cases \(\mathrm{G}_5^{(2)}{{M}}\simeq\langle 5^2,2\rangle\), the elementary bicyclic \(5\)-group of rank \(2\).\\

{\bf -- }For \(2\) cases \(\mathrm{G}_5^{(2)}{{M}}\) is a descendant of \(\langle 5^5,3\rangle\) indicated by the symbol \(\downarrow\).\\

{\bf -- }The last three groups have the biggest order \(5^9\) and coclass \(4\).\\
$\quad$They posses relation rank \(d_2=5\), which clearly enforces \(\ell_5{M}\ge 3\) by the Shafarevich \\
$\quad$Theorem. Again, we conjecture the equality \(\ell_5{M}=3\).\\
\end{tabular}\bigskip \bigskip

Furthermore,
we point out that the groups \(\langle 5^5,4\rangle\) and \(\langle 5^5,7\rangle\) with relation rank \(d_2=3\)
are not strong \(\sigma\)-groups in the sense of Schoof
\cite{Sf}.
They are forbidden as \(5\)-class tower groups for any quadratic field, imaginary or real.
However, they are admissible for our imaginary cyclic quartic fields \({M}\) with unit rank \(1\),
since the subfield \(k_0^+=\mathbb{Q}(\sqrt{5})\) also possesses unit rank \(1\),
and so a strong \(\sigma\)-group is not required.

%
%
%
%
%
%
%
%
\mbox{} 
\subsection{Real cyclic quartic fields \(M\) with \(d<0\)}
\label{ss:RealExamples}

\noindent
Table
\ref{tbl:CycQrtFld3},
resp. Table
\ref{tbl:CycQrtFld4},
resp. Table
\ref{tbl:CycQrtFld5},
shows
the factorized fundamental discriminant \(d\)
of the dual quadratic field \({k}_1\),
the \(5\)-principalization type \(\varkappa=\varkappa({M})\),
the \(5\)-class tower group \(\mathrm{G}_5^{(\infty)}{{M}}\),
the length \(\ell_5{M}\) of the \(5\)-class tower,
the \(5\)-class ranks \(r_1:=\varrho_5({k}_1)\), \(r_2:=\varrho_5({k}_2)\),
the invariants \(\delta_1\), \(\delta_2\),
and the case in Proposition
\ref{prp:FrobeniusFieldsReal}
for the \(38\), resp. \(38\), resp. \(17\),
cyclic quartic fields \({M}=\mathbb{Q}\left((\zeta_5-\zeta_5^{-1})\sqrt{d}\right)\)
with \(-100000<d<0\),
resp. \(-175000<d<-100000\),
resp. \(-200000<d<-175000\).\bigskip

%
%
%
%

The \textit{complete statistics} of the \(93\) \textit{real} cyclic quartic fields \({M}\) with \(-2\cdot 10^5<d<0\) is as follows:\\

\begin{tabular}{lll}
{\bf --} There are \(56\) (about \(60\%)\) cases with \(\mathrm{G}_5^{(\infty)}{{M}}\simeq\langle 5^3,3\rangle\) the extra special \(5\)-group of \\
$\quad$exponent \(5\).\\
{\bf --} There are \(23\) (about\(25\%)\) cases with \(\mathrm{G}_5^{(\infty)}{{M}}\simeq\langle 5^4,8\rangle\) having a transfer kernel type with \\
$\quad$fixed point.\\ 
{\bf --} There are \(8\) (about \(9\%)\) cases with \(\mathrm{G}_5^{(\infty)}{{M}}\simeq\langle 5^4,7\rangle\) having total transfer kernels \\
$\quad$exclusively.\\
{\bf --} For only \(5\) cases we have \(\mathrm{G}_5^{(\infty)}{{M}}\simeq\langle 5^2,2\rangle\) the elementary bicyclic \(5\)-group of rank \(2\).\\
{\bf --} For a unique case \(\mathrm{G}_5^{(\infty)}{{M}}\simeq\langle 5^5,11\rangle\) the Schur \(\sigma\)-group with transfer kernel type a \(4\)-cycle.\\
\end{tabular}

\medskip
The \(5\)-class tower of \(M\) possesses length \(\ell_5{M}=1\) for the abelian \(\mathrm{G}_5^{(\infty)}{{M}}\simeq\langle 5^2,2\rangle\),\\
and \(\ell_5{M}=2\) in all other cases.

\bigskip
According to the Shafarevich Theorem
\cite[Thm. 6, Eqn. (18\({}^\prime\))]{Sh},
whose misprint we have corrected in
\cite[Thm.\,5.1, p.\,28]{Ma10},
the mainline groups \(\langle 5^3,3\rangle\) and \(\langle 5^4,7\rangle\) with relation rank \(d_2=4\)
are forbidden as \(5\)-class tower groups for real quadratic fields with unit rank \(1\),
but they are admissible for real cyclic quartic fields, which have bigger unit rank \(3\).

%
%
%
%
%
%
%
%

\subsection{The Galois action confirmed}
\label{ss:ConfirmedGaloisAction}
\noindent
All numerical results in the Tables
\ref{tbl:CycQrtFld1}
to
\ref{tbl:CycQrtFld5}
are in perfect accordance with Theorems
\ref{thm:CycQuart},
\ref{thm:Sigma5GroupsDegree4}
and Corollary
\ref{cor:ParentOperator}.
A rigorous check with the computational algebra system MAGMA
\cite{MAGMA,BEO2}
proves that only the two terminal Schur \(\sigma\)-groups
\(\langle 5^5,11\rangle\), \(\langle 5^5,14\rangle\)
and five other capable top vertices
\(\langle 5^5,3\rangle\), \(\langle 5^5,4\rangle\), \(\langle 5^5,5\rangle\), \(\langle 5^5,6\rangle\), \(\langle 5^5,7\rangle\)
in the stem of Hall's isoclinism family \(\Phi_6\),
and the abelian root \(\langle 5^2,2\rangle\), together with their descendants
\cite{Ma9},
are admissible for \(\mathrm{G}_5^{(2)}{{M}}\) of any cyclic quartic field \({M}\),
as drawn in Figure
\ref{fig:5MirrMinDisc}.

%
%
%
%


%
%
%
%

\renewcommand{\arraystretch}{1.0}
\begin{small}
\begin{table}[ht]
\caption{{\sf The group \(\mathrm{G}_5^{(2)}{{M}}\) of \({M}=\mathbb{Q}\left((\zeta_5-\zeta_5^{-1})\sqrt{d}\right)\)  with  \(0<d<5000\)}} 
\label{tbl:CycQrtFld1}
\begin{center}
\begin{tabular}{|r|r|c||c|l||c|c||c|c|c|c|c|}
\hline
 No.    & \multicolumn{2}{|c||}{Discriminant} & \multicolumn{2}{|c||}{Principalization} & \(\mathrm{G}_5^{(2)}{{M}}\) & \(\ell_5{M}\) & \multicolumn{5}{|c|}{Invariants} \\
        &    \(d\) & Factors      & \(\varkappa\) & Remark                  &                            &       & \(r_1\) & \(\delta_1\) & \(r_2\) & \(\delta_2\) & Case \\
\hline
  \(1\) &  \(257\) & prime        &  \((660666)\) & nearly const.           & \(\langle 5^5,4\rangle\)   & \(2\) &   \(0\) &        \(0\) &    \(0\)&        \(0\) &  (d) \\
  \(2\) &  \(457\) & prime        &  \((234156)\) & \(4\)-cycle             & \(\langle 5^5,11\rangle\)  & \(2\) &   \(0\) &        \(0\) &    \(0\)&        \(0\) &  (d) \\
  \(3\) &  \(501\) & \(3,167\)    &  \((521346)\) & \(4\)-cycle             & \(\langle 5^5,11\rangle\)  & \(2\) &   \(0\) &        \(0\) &    \(0\)&        \(0\) &  (d) \\
  \(4\) &  \(508\) & \(4,127\)    &  \((653421)\) & two \(2\)-cycles        & \(\langle 5^5,7\rangle\)   & \(2\) &   \(0\) &        \(0\) &    \(0\)&        \(0\) &  (d) \\
  \(5\) &  \(509\) & prime        &  \((216453)\) & two \(2\)-cycles        & \(\langle 5^5,7\rangle\)   & \(2\) &   \(0\) &        \(0\) &    \(0\)&        \(0\) &  (d) \\
  \(6\) &  \(581\) & \(7,83\)     &  \((123456)\) & identity                & \(\langle 5^5,14\rangle\)  & \(2\) &   \(0\) &        \(0\) &    \(0\)&        \(0\) &  (d) \\
  \(7\) &  \(629\) & \(17,37\)    &  \((154326)\) & two \(2\)-cycles        & \(\langle 5^5,7\rangle\)   & \(2\) &   \(0\) &        \(0\) &    \(0\)&        \(0\) &  (d) \\
  \(8\) &  \(753\) & \(3,251\)    &  \((123456)\) & identity                & \(\langle 5^5,14\rangle\)  & \(2\) &   \(0\) &        \(0\) &    \(0\)&        \(0\) &  (d) \\
  \(9\) &  \(764\) & \(4,191\)    &  \((666066)\) & nearly const.           & \(\langle 5^5,4\rangle\)   & \(2\) &   \(0\) &        \(0\) &    \(0\)&        \(0\) &  (d) \\
 \(10\) &  \(881\) & prime        &  \((463152)\) & two \(2\)-cycles        & \(\langle 5^5,7\rangle\)   & \(2\) &   \(0\) &        \(0\) &    \(0\)&        \(0\) &  (d) \\
 \(11\) & \(1113\) & \(3,7,53\)   &  \((653421)\) & two \(2\)-cycles        & \(\langle 5^5,7\rangle\)   & \(2\) &   \(0\) &        \(0\) &    \(0\)&        \(0\) &  (d) \\
 \(12\) & \(1137\) & \(3,379\)    &  \((444444)\) & constant                & \(\langle 5^7,891\vert 894\vert 897\rangle\) & \(\ge 3\) & \(0\) & \(0\) & \(0\) & \(0\) &  (d) \\
 \(13\) & \(1192\) & \(8,149\)    &  \((463152)\) & two \(2\)-cycles        & \(\langle 5^5,7\rangle\)   & \(2\) &   \(0\) &        \(0\) &    \(0\)&        \(0\) &  (d) \\
 \(14\) & \(1704\) & \(8,3,71\)   &  \((653421)\) & two \(2\)-cycles        & \(\langle 5^5,7\rangle\)   & \(2\) &   \(0\) &        \(0\) &    \(0\)&        \(0\) &  (d) \\
 \(15\) & \(1708\) & \(4,7,61\)   &  \((404444)\) & nearly const.           & \(\langle 5^5,4\rangle\)   & \(2\) &   \(0\) &        \(0\) &    \(0\)&        \(0\) &  (d) \\
 \(16\) & \(1829\) & \(31,59\)    &  \((216453)\) & two \(2\)-cycles        & \(\langle 5^5,7\rangle\)   & \(2\) &   \(0\) &        \(0\) &    \(0\)&        \(0\) &  (d) \\
 \(17\) & \(1853\) & \(17,109\)   &  \((550555)\) & nearly const.           & \(\langle 5^5,4\rangle\)   & \(2\) &   \(0\) &        \(0\) &    \(0\)&        \(0\) &  (d) \\
 \(18\) & \(1996\) & \(4,499\)    &  \((613254)\) & \(4\)-cycle             & \(\langle 5^5,11\rangle\)  & \(2\) &   \(1\) &        \(1\) &    \(0\)&        \(0\) &  (e) \\
 \(19\) & \(2008\) & \(8,251\)    &  \((550555)\) & nearly const.           & \(\langle 5^5,4\rangle\)   & \(2\) &   \(0\) &        \(0\) &    \(0\)&        \(0\) &  (d) \\
 \(20\) & \(2189\) & \(11,199\)   &  \((505555)\) & nearly const.           & \(\langle 5^5,4\rangle\)   & \(2\) &   \(0\) &        \(0\) &    \(0\)&        \(0\) &  (d) \\
 \(21\) & \(2296\) & \(8,7,41\)   &  \((123456)\) & identity                & \(\langle 5^5,14\rangle\)  & \(2\) &   \(0\) &        \(0\) &    \(0\)&        \(0\) &  (d) \\
 \(22\) & \(2573\) & \(31,83\)    &  \((613254)\) & \(4\)-cycle             & \(\langle 5^5,11\rangle\)  & \(2\) &   \(0\) &        \(0\) &    \(0\)&        \(0\) &  (d) \\
 \(23\) & \(2829\) & \(3,23,41\)  &  \((123456)\) & identity                & \(\langle 5^5,14\rangle\)  & \(2\) &   \(0\) &        \(0\) &    \(0\)&        \(0\) &  (d) \\
 \(24\) & \(3121\) & prime        &  \((532416)\) & two \(2\)-cycles        & \(\langle 5^5,7\rangle\)   & \(2\) &   \(1\) &        \(1\) &    \(0\)&        \(0\) &  (e) \\
 \(25\) & \(3129\) & \(3,7,149\)  &  \((333303)\) & nearly const.           & \(\langle 5^5,4\rangle\)   & \(2\) &   \(1\) &        \(1\) &    \(0\)&        \(0\) &  (e) \\
 \(26\) & \(3169\) & prime        &  \((444444)\) & constant                & \(\langle 5^7,891\vert 894\vert 897\rangle\) & \(\ge 3\) & \(0\) & \(0\) & \(0\) & \(0\) &  (d) \\
 \(27\) & \(3253\) & prime        &  \((243651)\) & \(4\)-cycle             & \(\langle 5^5,11\rangle\)  & \(2\) &   \(1\) &        \(1\) &    \(0\)&        \(0\) &  (e) \\
 \(28\) & \(4189\) & \(59,71\)    &  \((243651)\) & \(4\)-cycle             & \(\langle 5^5,11\rangle\)  & \(2\) &   \(0\) &        \(0\) &    \(0\)&        \(0\) &  (d) \\
 \(29\) & \(4357\) & prime        &  \((000000)\) & abelian                 & \(\langle 5^2,2\rangle\)   & \(1\) &   \(1\) &        \(0\) &    \(0\)&        \(1\) &  (a) \\
 \(30\) & \(4444\) & \(4,11,101\) &  \((000000)\) & abelian                 & \(\langle 5^2,2\rangle\)   & \(1\) &   \(1\) &        \(0\) &    \(0\)&        \(1\) &  (a) \\
 \(31\) & \(4461\) & \(3,1487\)   &  \((653421)\) & two \(2\)-cycles        & \(\langle 5^5,7\rangle\)   & \(2\) &   \(0\) &        \(0\) &    \(0\)&        \(0\) &  (d) \\
 \(32\) & \(4504\) & \(8,563\)    &  \((444404)\) & nearly const.           & \(\langle 5^5,4\rangle\)   & \(2\) &   \(1\) &        \(1\) &    \(1\)&        \(1\) &  (c) \\
 \(33\) & \(4553\) & \(29,157\)   &  \((123456)\) & identity                & \(\langle 5^5,14\rangle\)  & \(2\) &   \(0\) &        \(0\) &    \(0\)&        \(0\) &  (d) \\
 \(34\) & \(4697\) & \(7,11,61\)  &  \((000000)\) & tot., non-ab.           & \(\langle 5^5,3\rangle\downarrow\) & \(\ge 3\) & \(0\) &  \(0\) &    \(0\)&        \(0\) &  (d) \\
 \(35\) & \(4709\) & \(17,277\)   &  \((444444)\) & constant                & \(\langle 5^7,885\rangle\) & \(\ge 3\) &   \(0\) &        \(0\) &    \(0\)&        \(0\) &  (d) \\
 \(36\) & \(4861\) & prime        &  \((333303)\) & nearly const.           & \(\langle 5^5,4\rangle\)   & \(2\) &   \(0\) &        \(0\) &    \(0\)&        \(0\) &  (d) \\
 \(37\) & \(4957\) & prime        &  \((135246)\) & \(4\)-cycle             & \(\langle 5^5,11\rangle\)  & \(2\) &   \(0\) &        \(0\) &    \(0\)&        \(0\) &  (d) \\
\hline
\end{tabular}
\end{center}
\end{table}
\end{small}
 
%
%
%
%

\newpage

%
%

\renewcommand{\arraystretch}{1.0}
\begin{table}[ht]
\caption{{\sf The group \(\mathrm{G}_5^{(2)}{{M}}\) of \({M}=\mathbb{Q}\left((\zeta_5-\zeta_5^{-1})\sqrt{d}\right)\)  with \(5000<d<10000\)}}
  
\label{tbl:CycQrtFld2}
\begin{center}
\begin{tabular}{|r|r|c||c|l||c|c||c|c|c|c|c|}
\hline
 No.    & \multicolumn{2}{|c||}{Discriminant} & \multicolumn{2}{|c||}{Principalization} & \(\mathrm{G}_5^{(2)}{{M}}\) & \(\ell_5{M}\) & \multicolumn{5}{|c|}{Invariants} \\
        &    \(d\) & Factors      & \(\varkappa\) & Remark                  &                            &       & \(r_1\) & \(\delta_1\) & \(r_2\) & \(\delta_2\) & Case \\
\hline
 \(38\) & \(5116\) & \(4,1279\)   &  \((123456)\) & identity                & \(\langle 5^5,14\rangle\)  & \(2\) &   \(0\) &        \(0\) &    \(1\)&        \(1\) &  (f) \\
 \(39\) & \(5129\) & \(23,223\)   &  \((526431)\) & \(4\)-cycle             & \(\langle 5^5,11\rangle\)  & \(2\) &   \(0\) &        \(0\) &    \(0\)&        \(0\) &  (d) \\
 \(40\) & \(5233\) & prime        &  \((142536)\) & \(4\)-cycle             & \(\langle 5^5,11\rangle\)  & \(2\) &   \(0\) &        \(0\) &    \(0\)&        \(0\) &  (d) \\
 \(41\) & \(5241\) & \(3,1747\)   &  \((660666)\) & nearly const.           & \(\langle 5^5,4\rangle\)   & \(2\) &   \(1\) &        \(1\) &    \(0\)&        \(0\) &  (e) \\
 \(42\) & \(5269\) & \(11,479\)   &  \((222220)\) & nearly const.           & \(\langle 5^5,4\rangle\)   & \(2\) &   \(1\) &        \(1\) &    \(0\)&        \(0\) &  (e) \\
 \(43\) & \(5308\) & \(4,1327\)   &  \((513462)\) & \(4\)-cycle             & \(\langle 5^5,11\rangle\)  & \(2\) &   \(1\) &        \(1\) &    \(0\)&        \(0\) &  (e) \\
 \(44\) & \(5361\) & \(3,1787\)   &  \((625413)\) & \(4\)-cycle             & \(\langle 5^5,11\rangle\)  & \(2\) &   \(0\) &        \(0\) &    \(0\)&        \(0\) &  (d) \\
 \(45\) & \(5393\) & prime        &  \((440444)\) & nearly const.           & \(\langle 5^5,4\rangle\)   & \(2\) &   \(0\) &        \(0\) &    \(0\)&        \(0\) &  (d) \\
 \(46\) & \(5464\) & \(8,683\)    &  \((440444)\) & nearly const.           & \(\langle 5^5,4\rangle\)   & \(2\) &   \(0\) &        \(0\) &    \(0\)&        \(0\) &  (d) \\
 \(47\) & \(5557\) & prime        &  \((111111)\) & constant                & \(\langle 5^7,885\rangle\) & \(\ge 3\) &   \(0\) &        \(0\) &    \(0\)&        \(0\) &  (d) \\
 \(48\) & \(5736\) & \(8,3,239\)  &  \((123456)\) & identity                & \(\langle 5^5,14\rangle\)  & \(2\) &   \(0\) &        \(0\) &    \(0\)&        \(0\) &  (d) \\
 \(49\) & \(5989\) & \(53,113\)   &  \((440444)\) & nearly const.           & \(\langle 5^5,4\rangle\)   & \(2\) &   \(0\) &        \(0\) &    \(0\)&        \(0\) &  (d) \\
 \(50\) & \(6072\) & \(8,3,11,23\)&  \((613254)\) & \(4\)-cycle             & \(\langle 5^5,11\rangle\)  & \(2\) &   \(0\) &        \(0\) &    \(0\)&        \(0\) &  (d) \\
 \(51\) & \(6073\) & prime        &  \((000000)\) & tot., non-ab.           & \(\langle 5^5,3\rangle\downarrow\) & \(\ge 3\) & \(0\) &  \(0\) &    \(0\)&        \(0\) &  (d) \\
 \(52\) & \(6113\) & prime        &  \((421653)\) & \(4\)-cycle             & \(\langle 5^5,11\rangle\)  & \(2\) &   \(1\) &        \(1\) &    \(0\)&        \(0\) &  (e) \\
 \(53\) & \(6524\) & \(4,7,233\)  &  \((513462)\) & \(4\)-cycle             & \(\langle 5^5,11\rangle\)  & \(2\) &   \(0\) &        \(0\) &    \(0\)&        \(0\) &  (d) \\
 \(54\) & \(6761\) & prime        &  \((123456)\) & identity                & \(\langle 5^5,14\rangle\)  & \(2\) &   \(0\) &        \(0\) &    \(0\)&        \(0\) &  (d) \\
 \(55\) & \(6949\) & prime        &  \((666066)\) & nearly const.           & \(\langle 5^5,4\rangle\)   & \(2\) &   \(1\) &        \(1\) &    \(1\)&        \(1\) &  (c) \\
 \(56\) & \(6952\) & \(8,11,79\)  &  \((220222)\) & nearly const.           & \(\langle 5^5,4\rangle\)   & \(2\) &   \(0\) &        \(0\) &    \(0\)&        \(0\) &  (d) \\
 \(57\) & \(7032\) & \(8,3,293\)  &  \((213546)\) & two \(2\)-cycles        & \(\langle 5^5,7\rangle\)   & \(2\) &   \(0\) &        \(0\) &    \(0\)&        \(0\) &  (d) \\
 \(58\) & \(7041\) & \(3,2347\)   &  \((666666)\) & constant                & \(\langle 5^7,885\rangle\) & \(\ge 3\) &   \(0\) &        \(0\) &    \(0\)&        \(0\) &  (d) \\
 \(59\) & \(7221\) & \(3,29,83\)  &  \((444404)\) & nearly const.           & \(\langle 5^5,4\rangle\)   & \(2\) &   \(1\) &        \(1\) &    \(1\)&        \(1\) &  (c) \\
 \(60\) & \(7229\) & prime        &  \((444444)\) & constant                & \(\langle 5^7,885\rangle\) & \(\ge 3\) &   \(1\) &        \(1\) &    \(1\)&        \(1\) &  (c) \\
 \(61\) & \(7336\) & \(8,7,131\)  &  \((606666)\) & nearly const.           & \(\langle 5^5,4\rangle\)   & \(2\) &   \(0\) &        \(0\) &    \(0\)&        \(0\) &  (d) \\
 \(62\) & \(7361\) & \(17,433\)   &  \((653421)\) & two \(2\)-cycles        & \(\langle 5^5,7\rangle\)   & \(2\) &   \(0\) &        \(0\) &    \(0\)&        \(0\) &  (d) \\
 \(63\) & \(7489\) & prime        &  \((123456)\) & identity                & \(\langle 5^5,14\rangle\)  & \(2\) &   \(0\) &        \(0\) &    \(0\)&        \(0\) &  (d) \\
 \(64\) & \(7628\) & \(4,1907\)   &  \((164253)\) & \(4\)-cycle             & \(\langle 5^5,11\rangle\)  & \(2\) &   \(0\) &        \(0\) &    \(0\)&        \(0\) &  (d) \\
 \(65\) & \(7656\) & \(8,3,11,29\)&  \((444404)\) & nearly const.           & \(\langle 5^5,4\rangle\)   & \(2\) &   \(0\) &        \(0\) &    \(0\)&        \(0\) &  (d) \\
 \(66\) & \(7752\) & \(8,3,17,19\)&  \((623145)\) & \(4\)-cycle             & \(\langle 5^5,11\rangle\)  & \(2\) &   \(0\) &        \(0\) &    \(0\)&        \(0\) &  (d) \\
 \(67\) & \(7833\) & \(3,7,373\)  &  \((326154)\) & \(4\)-cycle             & \(\langle 5^5,11\rangle\)  & \(2\) &   \(0\) &        \(0\) &    \(0\)&        \(0\) &  (d) \\
 \(68\) & \(7996\) & \(4,1999\)   &  \((022222)\) & nearly const.           & \(\langle 5^5,4\rangle\)   & \(2\) &   \(0\) &        \(0\) &    \(0\)&        \(0\) &  (d) \\
 \(69\) & \(8008\) & \(8,7,11,13\)&  \((625413)\) & \(4\)-cycle             & \(\langle 5^5,11\rangle\)  & \(2\) &   \(0\) &        \(0\) &    \(0\)&        \(0\) &  (d) \\
 \(70\) & \(8012\) & \(4,2003\)   &  \((165432)\) & two \(2\)-cycles        & \(\langle 5^5,7\rangle\)   & \(2\) &   \(0\) &        \(0\) &    \(0\)&        \(0\) &  (d) \\
 \(71\) & \(8309\) & \(7,1187\)   &  \((111110)\) & nearly const.           & \(\langle 5^5,4\rangle\)   & \(2\) &   \(1\) &        \(1\) &    \(0\)&        \(0\) &  (e) \\
 \(72\) & \(8689\) & prime        &  \((002001)\) & coclass \(4\)           & \(\langle 5^7,115\rangle\downarrow\) & \(\ge 3\) & \(1\) & \(1\) &   \(0\)&        \(0\) &  (e) \\
 \(73\) & \(8789\) & \(11,17,47\) &  \((362451)\) & \(4\)-cycle             & \(\langle 5^5,11\rangle\)  & \(2\) &   \(0\) &        \(0\) &    \(0\)&        \(0\) &  (d) \\
 \(74\) & \(8877\) & \(3,11,269\) &  \((463152)\) & two \(2\)-cycles        & \(\langle 5^5,7\rangle\)   & \(2\) &   \(0\) &        \(0\) &    \(0\)&        \(0\) &  (d) \\
 \(75\) & \(8972\) & \(4,2243\)   &  \((362451)\) & \(4\)-cycle             & \(\langle 5^5,11\rangle\)  & \(2\) &   \(0\) &        \(0\) &    \(1\)&        \(1\) &  (f) \\
 \(76\) & \(9013\) & prime        &  \((123456)\) & identity                & \(\langle 5^5,14\rangle\)  & \(2\) &   \(0\) &        \(0\) &    \(1\)&        \(1\) &  (f) \\
 \(77\) & \(9052\) & \(4,31,73\)  &  \((333303)\) & nearly const.           & \(\langle 5^5,4\rangle\)   & \(2\) &   \(0\) &        \(0\) &    \(0\)&        \(0\) &  (d) \\
 \(78\) & \(9544\) & \(8,1193\)   &  \((125364)\) & \(4\)-cycle             & \(\langle 5^5,11\rangle\)  & \(2\) &   \(0\) &        \(0\) &    \(0\)&        \(0\) &  (d) \\
 \(79\) & \(9564\) & \(4,3,797\)  &  \((425136)\) & two \(2\)-cycles        & \(\langle 5^5,7\rangle\)   & \(2\) &   \(0\) &        \(0\) &    \(0\)&        \(0\) &  (d) \\
 \(80\) & \(9573\) & \(3,3191\)   &  \((216453)\) & two \(2\)-cycles        & \(\langle 5^5,7\rangle\)   & \(2\) &   \(0\) &        \(0\) &    \(0\)&        \(0\) &  (d) \\
 \(81\) & \(9669\) & \(3,11,293\) &  \((362451)\) & \(4\)-cycle             & \(\langle 5^5,11\rangle\)  & \(2\) &   \(1\) &        \(1\) &    \(1\)&        \(1\) &  (c) \\
 \(82\) & \(9752\) & \(8,23,53\)  &  \((513462)\) & \(4\)-cycle             & \(\langle 5^5,11\rangle\)  & \(2\) &   \(0\) &        \(0\) &    \(0\)&        \(0\) &  (d) \\
 \(83\) & \(9829\) & prime        &  \((123456)\) & identity                & \(\langle 5^5,14\rangle\)  & \(2\) &   \(1\) &        \(1\) &    \(0\)&        \(0\) &  (e) \\
\hline
\end{tabular}
\end{center}
\end{table}

%
%
%
%

\newpage

%
%
%
%

\renewcommand{\arraystretch}{1.0}

\begin{table}[ht]
\caption{{\sf The group \(\mathrm{G}_5^{(\infty)}{{M}}\) of \({M}=\mathbb{Q}\left((\zeta_5-\zeta_5^{-1})\sqrt{d}\right)\)  with \(-100000<d<0\)}}
\label{tbl:CycQrtFld3}
\begin{center}
\begin{tabular}{|r|r|c||c|l||c|c||c|c|c|c|c|}
\hline
 No.    & \multicolumn{2}{|c||}{Discriminant} & \multicolumn{2}{|c||}{Principalization} & \(\mathrm{G}_5^{(\infty)}{{M}}\) & \(\ell_5{M}\) & \multicolumn{5}{|c|}{Invariants} \\
        &       \(d\) & Factors       & \(\varkappa\) & Type                   &                            &       & \(r_1\) & \(\delta_1\) & \(r_2\) & \(\delta_2\) & Case \\
\hline
  \(1\) &  \(-12883\) &    \(13,991\) &  \((000000)\) & a.1                    & \(\langle 5^3,3\rangle\)   & \(2\) &   \(1\) &        \(0\) &    \(1\)&        \(0\) &  (c) \\
  \(2\) &  \(-13147\) &         prime &  \((000000)\) & a.1                    & \(\langle 5^3,3\rangle\)   & \(2\) &   \(1\) &        \(0\) &    \(1\)&        \(0\) &  (c) \\
  \(3\) &  \(-14339\) &   \(13,1103\) &  \((000000)\) & a.1                    & \(\langle 5^3,3\rangle\)   & \(2\) &   \(1\) &        \(0\) &    \(1\)&        \(0\) &  (c) \\
  \(4\) &  \(-15419\) &    \(17,907\) &  \((000000)\) & a.1 \(\uparrow\)       & \(\langle 5^4,7\rangle\)   & \(2\) &   \(1\) &        \(0\) &    \(1\)&        \(0\) &  (c) \\
  \(5\) &  \(-16724\) &  \(4,37,113\) &  \((000000)\) & a.1 \(\uparrow\)       & \(\langle 5^4,7\rangle\)   & \(2\) &   \(1\) &        \(0\) &    \(1\)&        \(0\) &  (c) \\
  \(6\) &  \(-23336\) &    \(8,2917\) &  \((000000)\) & a.1                    & \(\langle 5^3,3\rangle\)   & \(2\) &   \(1\) &        \(0\) &    \(1\)&        \(0\) &  (c) \\
  \(7\) &  \(-23732\) &  \(4,17,349\) &  \((000000)\) & a.1                    & \(\langle 5^3,3\rangle\)   & \(2\) &   \(1\) &        \(0\) &    \(1\)&        \(0\) &  (c) \\
  \(8\) &  \(-26743\) &    \(47,569\) &  \((000000)\) & a.1                    & \(\langle 5^3,3\rangle\)   & \(2\) &   \(1\) &        \(0\) &    \(1\)&        \(0\) &  (c) \\
  \(9\) &  \(-27528\) & \(8,3,31,37\) &  \((003000)\) & a.2, fixed point       & \(\langle 5^4,8\rangle\)   & \(2\) &   \(1\) &        \(0\) &    \(1\)&        \(0\) &  (c) \\
 \(10\) &  \(-27939\) &  \(3,67,139\) &  \((000050)\) & a.2, fixed point       & \(\langle 5^4,8\rangle\)   & \(2\) &   \(1\) &        \(0\) &    \(1\)&        \(0\) &  (c) \\
 \(11\) &  \(-28696\) &  \(8,17,211\) &  \((000000)\) & a.1                    & \(\langle 5^3,3\rangle\)   & \(2\) &   \(1\) &        \(0\) &    \(1\)&        \(0\) &  (c) \\
 \(12\) &  \(-31103\) &   \(19,1637\) &  \((000000)\) & a.1 \(\uparrow\)       & \(\langle 5^4,7\rangle\)   & \(2\) &   \(1\) &        \(0\) &    \(1\)&        \(0\) &  (c) \\
 \(13\) &  \(-35067\) &   \(3,11689\) &  \((000000)\) & a.1                    & \(\langle 5^3,3\rangle\)   & \(2\) &   \(1\) &        \(0\) &    \(1\)&        \(0\) &  (c) \\
 \(14\) &  \(-35839\) &         prime &  \((000000)\) & a.1                    & \(\langle 5^3,3\rangle\)   & \(2\) &   \(1\) &        \(0\) &    \(1\)&        \(0\) &  (c) \\
 \(15\) &  \(-38984\) &  \(8,11,443\) &  \((000000)\) & a.1                    & \(\langle 5^3,3\rangle\)   & \(2\) &   \(1\) &        \(0\) &    \(1\)&        \(0\) &  (c) \\
 \(16\) &  \(-39947\) &    \(43,929\) &  \((003000)\) & a.2, fixed point       & \(\langle 5^4,8\rangle\)   & \(2\) &   \(2\) &        \(1\) &    \(1\)&        \(0\) &  (d) \\
 \(17\) &  \(-40823\) &         prime &  \((000050)\) & a.2, fixed point       & \(\langle 5^4,8\rangle\)   & \(2\) &   \(1\) &        \(0\) &    \(1\)&        \(0\) &  (c) \\
 \(18\) &  \(-42899\) &         prime &  \((000000)\) & a.1 \(\uparrow\)       & \(\langle 5^4,7\rangle\)   & \(2\) &   \(1\) &        \(0\) &    \(1\)&        \(0\) &  (c) \\
 \(19\) &  \(-47172\) &  \(4,3,3931\) &  \((000000)\) & a.1                    & \(\langle 5^3,3\rangle\)   & \(2\) &   \(1\) &        \(0\) &    \(1\)&        \(0\) &  (c) \\
 \(20\) &  \(-52276\) &  \(4,7,1867\) &  \((000000)\) & a.1                    & \(\langle 5^3,3\rangle\)   & \(2\) &   \(1\) &        \(0\) &    \(1\)&        \(0\) &  (c) \\
 \(21\) &  \(-54347\) &         prime &  \((100000)\) & a.2, fixed point       & \(\langle 5^4,8\rangle\)   & \(2\) &   \(1\) &        \(0\) &    \(1\)&        \(0\) &  (c) \\
 \(22\) &  \(-55667\) &         prime &  \((000000)\) & a.1                    & \(\langle 5^3,3\rangle\)   & \(2\) &   \(1\) &        \(0\) &    \(1\)&        \(0\) &  (c) \\
 \(23\) &  \(-56167\) &         prime &  \((000000)\) & a.1                    & \(\langle 5^3,3\rangle\)   & \(2\) &   \(1\) &        \(0\) &    \(1\)&        \(0\) &  (c) \\
 \(24\) &  \(-58424\) &  \(8,67,109\) &  \((000000)\) & abelian                & \(\langle 5^2,2\rangle\)   & \(1\) &   \(2\) &        \(0\) &    \(0\)&        \(0\) &  (a) \\
 \(25\) &  \(-64103\) &   \(13,4931\) &  \((000000)\) & a.1                    & \(\langle 5^3,3\rangle\)   & \(2\) &   \(2\) &        \(1\) &    \(1\)&        \(0\) &  (d) \\
 \(26\) &  \(-64724\) & \(4,11,1471\) &  \((000000)\) & a.1                    & \(\langle 5^3,3\rangle\)   & \(2\) &   \(1\) &        \(0\) &    \(1\)&        \(0\) &  (c) \\
 \(27\) &  \(-67128\) &  \(8,3,2797\) &  \((000000)\) & a.1 \(\uparrow\)       & \(\langle 5^4,7\rangle\)   & \(2\) &   \(2\) &        \(1\) &    \(1\)&        \(0\) &  (d) \\
 \(28\) &  \(-69619\) &   \(11,6329\) &  \((000000)\) & a.1                    & \(\langle 5^3,3\rangle\)   & \(2\) &   \(1\) &        \(0\) &    \(1\)&        \(0\) &  (c) \\
 \(29\) &  \(-70763\) &  \(7,11,919\) &  \((000000)\) & a.1 \(\uparrow\)       & \(\langle 5^4,7\rangle\)   & \(2\) &   \(1\) &        \(0\) &    \(1\)&        \(0\) &  (c) \\
 \(30\) &  \(-74019\) & \(3,11,2243\) &  \((000000)\) & a.1                    & \(\langle 5^3,3\rangle\)   & \(2\) &   \(1\) &        \(0\) &    \(1\)&        \(0\) &  (c) \\
 \(31\) &  \(-75103\) &   \(7,10729\) &  \((000000)\) & a.1                    & \(\langle 5^3,3\rangle\)   & \(2\) &   \(1\) &        \(0\) &    \(1\)&        \(0\) &  (c) \\
 \(32\) &  \(-75892\) &   \(4,18973\) &  \((100000)\) & a.2, fixed point       & \(\langle 5^4,8\rangle\)   & \(2\) &   \(1\) &        \(0\) &    \(1\)&        \(0\) &  (c) \\
 \(33\) &  \(-78747\) &   \(3,26249\) &  \((000000)\) & a.1                    & \(\langle 5^3,3\rangle\)   & \(2\) &   \(1\) &        \(0\) &    \(1\)&        \(0\) &  (c) \\
 \(34\) &  \(-83636\) &\(4,7,29,103\) &  \((000000)\) & a.1                    & \(\langle 5^3,3\rangle\)   & \(2\) &   \(1\) &        \(0\) &    \(1\)&        \(0\) &  (c) \\
 \(35\) &  \(-86404\) &   \(4,21601\) &  \((000000)\) & a.1                    & \(\langle 5^3,3\rangle\)   & \(2\) &   \(1\) &        \(0\) &    \(1\)&        \(0\) &  (c) \\
 \(36\) &  \(-91127\) &         prime &  \((000400)\) & a.2, fixed point       & \(\langle 5^4,8\rangle\)   & \(2\) &   \(1\) &        \(0\) &    \(1\)&        \(0\) &  (c) \\
 \(37\) &  \(-92219\) &         prime &  \((000000)\) & a.1                    & \(\langle 5^3,3\rangle\)   & \(2\) &   \(1\) &        \(0\) &    \(1\)&        \(0\) &  (c) \\
 \(38\) &  \(-99428\) & \(4,7,53,67\) &  \((003000)\) & a.2, fixed point       & \(\langle 5^4,8\rangle\)   & \(2\) &   \(1\) &        \(0\) &    \(1\)&        \(0\) &  (c) \\
\hline
\end{tabular}
\end{center}
\end{table}

%
%
%
%

\newpage

%
%
%
%

\renewcommand{\arraystretch}{1.0}

\begin{table}[ht]
\caption{{\sf The group \(\mathrm{G}_5^{(\infty)}{{M}}\) of \({M}=\mathbb{Q}\left((\zeta_5-\zeta_5^{-1})\sqrt{d}\right)\)  with \(-175000<d<-100000\)}}
\label{tbl:CycQrtFld4}
\begin{center}
\begin{tabular}{|r|r|c||c|l||c|c||c|c|c|c|c|}
\hline
 No.    & \multicolumn{2}{|c||}{Discriminant} & \multicolumn{2}{|c||}{Principalization} & \(\mathrm{G}_5^{(\infty)}{{M}}\) & \(\ell_5{M}\) & \multicolumn{5}{|c|}{Invariants} \\
        &       \(d\) & Factors       & \(\varkappa\) & Type                   &                            &       & \(r_1\) & \(\delta_1\) & \(r_2\) & \(\delta_2\) & Case \\
\hline
 \(39\) & \(-100708\) & \(4,17,1481\) &  \((000000)\) & a.1                    & \(\langle 5^3,3\rangle\)   & \(2\) &   \(1\) &        \(0\) &    \(1\)&        \(0\) &  (c) \\
 \(40\) & \(-101011\) &   \(83,1217\) &  \((000000)\) & a.1                    & \(\langle 5^3,3\rangle\)   & \(2\) &   \(1\) &        \(0\) &    \(1\)&        \(0\) &  (c) \\
 \(41\) & \(-101784\) &  \(8,3,4241\) &  \((003000)\) & a.2, fixed point       & \(\langle 5^4,8\rangle\)   & \(2\) &   \(1\) &        \(0\) &    \(1\)&        \(0\) &  (c) \\
 \(42\) & \(-104503\) &   \(7,14929\) &  \((000000)\) & a.1                    & \(\langle 5^3,3\rangle\)   & \(2\) &   \(2\) &        \(1\) &    \(1\)&        \(0\) &  (d) \\
 \(43\) & \(-105431\) & \(19,31,179\) &  \((000400)\) & a.2, fixed point       & \(\langle 5^4,8\rangle\)   & \(2\) &   \(1\) &        \(0\) &    \(1\)&        \(0\) &  (c) \\
 \(44\) & \(-105784\) &  \(8,7,1889\) &  \((000000)\) & a.1 \(\uparrow\)       & \(\langle 5^4,7\rangle\)   & \(2\) &   \(1\) &        \(0\) &    \(1\)&        \(0\) &  (c) \\
 \(45\) & \(-107791\) &         prime &  \((000000)\) & a.1                    & \(\langle 5^3,3\rangle\)   & \(2\) &   \(1\) &        \(0\) &    \(1\)&        \(0\) &  (c) \\
 \(46\) & \(-110479\) &         prime &  \((000000)\) & a.1                    & \(\langle 5^3,3\rangle\)   & \(2\) &   \(1\) &        \(0\) &    \(2\)&        \(1\) &  (e) \\
 \(47\) & \(-114303\) &  \(3,7,5443\) &  \((263415)\) & \(4\)-cycle            & \(\langle 5^5,11\rangle\)  & \(2\) &   \(1\) &        \(0\) &    \(1\)&        \(0\) &  (c) \\
 \(48\) & \(-114679\) &         prime &  \((000006)\) & a.2, fixed point       & \(\langle 5^4,8\rangle\)   & \(2\) &   \(1\) &        \(0\) &    \(1\)&        \(0\) &  (c) \\
 \(49\) & \(-115912\) &   \(8,14489\) &  \((000000)\) & abelian                & \(\langle 5^2,2\rangle\)   & \(1\) &   \(2\) &        \(0\) &    \(0\)&        \(0\) &  (a) \\
 \(50\) & \(-119191\) &         prime &  \((000000)\) & a.1                    & \(\langle 5^3,3\rangle\)   & \(2\) &   \(2\) &        \(1\) &    \(1\)&        \(0\) &  (d) \\
 \(51\) & \(-123028\) &   \(4,30757\) &  \((000000)\) & a.1                    & \(\langle 5^3,3\rangle\)   & \(2\) &   \(1\) &        \(0\) &    \(1\)&        \(0\) &  (c) \\
 \(52\) & \(-124099\) &   \(193,643\) &  \((000000)\) & a.1                    & \(\langle 5^3,3\rangle\)   & \(2\) &   \(1\) &        \(0\) &    \(1\)&        \(0\) &  (c) \\
 \(53\) & \(-125547\) &   \(3,41849\) &  \((003000)\) & a.2, fixed point       & \(\langle 5^4,8\rangle\)   & \(2\) &   \(1\) &        \(0\) &    \(1\)&        \(0\) &  (c) \\
 \(54\) & \(-127259\) & \(11,23,503\) &  \((000000)\) & a.1                    & \(\langle 5^3,3\rangle\)   & \(2\) &   \(1\) &        \(0\) &    \(1\)&        \(0\) &  (c) \\
 \(55\) & \(-127519\) &   \(7,18217\) &  \((000000)\) & a.1                    & \(\langle 5^3,3\rangle\)   & \(2\) &   \(1\) &        \(0\) &    \(1\)&        \(0\) &  (c) \\
 \(56\) & \(-133188\) &\(4,3,11,1009\)&  \((000000)\) & a.1                    & \(\langle 5^3,3\rangle\)   & \(2\) &   \(1\) &        \(0\) &    \(1\)&        \(0\) &  (c) \\
 \(57\) & \(-134392\) & \(8,107,157\) &  \((000000)\) & a.1                    & \(\langle 5^3,3\rangle\)   & \(2\) &   \(1\) &        \(0\) &    \(1\)&        \(0\) &  (c) \\
 \(58\) & \(-136311\) &  \(3,7,6491\) &  \((000000)\) & a.1                    & \(\langle 5^3,3\rangle\)   & \(2\) &   \(1\) &        \(0\) &    \(1\)&        \(0\) &  (c) \\
 \(59\) & \(-139703\) &         prime &  \((000000)\) & a.1                    & \(\langle 5^3,3\rangle\)   & \(2\) &   \(1\) &        \(0\) &    \(1\)&        \(0\) &  (c) \\
 \(60\) & \(-140232\) &  \(8,3,5843\) &  \((000000)\) & a.1                    & \(\langle 5^3,3\rangle\)   & \(2\) &   \(1\) &        \(0\) &    \(1\)&        \(0\) &  (c) \\
 \(61\) & \(-142904\) &   \(8,17863\) &  \((000000)\) & a.1                    & \(\langle 5^3,3\rangle\)   & \(2\) &   \(1\) &        \(0\) &    \(1\)&        \(0\) &  (c) \\
 \(62\) & \(-145007\) &         prime &  \((000000)\) & a.1                    & \(\langle 5^3,3\rangle\)   & \(2\) &   \(1\) &        \(0\) &    \(1\)&        \(0\) &  (c) \\
 \(63\) & \(-145668\) &\(4,3,61,199\) &  \((000000)\) & a.1                    & \(\langle 5^3,3\rangle\)   & \(2\) &   \(1\) &        \(0\) &    \(1\)&        \(0\) &  (c) \\
 \(64\) & \(-148004\) & \(4,163,227\) &  \((003000)\) & a.2, fixed point       & \(\langle 5^4,8\rangle\)   & \(2\) &   \(1\) &        \(0\) &    \(1\)&        \(0\) &  (c) \\
 \(65\) & \(-148507\) &   \(97,1531\) &  \((000000)\) & abelian                & \(\langle 5^2,2\rangle\)   & \(1\) &   \(2\) &        \(0\) &    \(0\)&        \(0\) &  (a) \\
 \(66\) & \(-151879\) & \(7,13,1669\) &  \((000000)\) & abelian                & \(\langle 5^2,2\rangle\)   & \(1\) &   \(2\) &        \(0\) &    \(0\)&        \(0\) &  (a) \\
 \(67\) & \(-154408\) &   \(8,19301\) &  \((000000)\) & abelian                & \(\langle 5^2,2\rangle\)   & \(1\) &   \(2\) &        \(0\) &    \(0\)&        \(0\) &  (a) \\
 \(68\) & \(-155603\) &   \(7,22229\) &  \((000000)\) & a.1                    & \(\langle 5^3,3\rangle\)   & \(2\) &   \(1\) &        \(0\) &    \(1\)&        \(0\) &  (c) \\
 \(69\) & \(-157028\) & \(4,37,1061\) &  \((003000)\) & a.2, fixed point       & \(\langle 5^4,8\rangle\)   & \(2\) &   \(1\) &        \(0\) &    \(1\)&        \(0\) &  (c) \\
 \(70\) & \(-157031\) &   \(7,22433\) &  \((000000)\) & a.1                    & \(\langle 5^3,3\rangle\)   & \(2\) &   \(1\) &        \(0\) &    \(1\)&        \(0\) &  (c) \\
 \(71\) & \(-159679\) & \(13,71,173\) &  \((000000)\) & a.1                    & \(\langle 5^3,3\rangle\)   & \(2\) &   \(1\) &        \(0\) &    \(1\)&        \(0\) &  (c) \\
 \(72\) & \(-160571\) &   \(211,761\) &  \((000000)\) & a.1                    & \(\langle 5^3,3\rangle\)   & \(2\) &   \(1\) &        \(0\) &    \(1\)&        \(0\) &  (c) \\
 \(73\) & \(-163427\) & \(11,83,179\) &  \((000050)\) & a.2, fixed point       & \(\langle 5^4,8\rangle\)   & \(2\) &   \(1\) &        \(0\) &    \(1\)&        \(0\) &  (c) \\
 \(74\) & \(-164116\) &  \(4,89,461\) &  \((000006)\) & a.2, fixed point       & \(\langle 5^4,8\rangle\)   & \(2\) &   \(1\) &        \(0\) &    \(1\)&        \(0\) &  (c) \\
 \(75\) & \(-165364\) &   \(4,41341\) &  \((000400)\) & a.2, fixed point       & \(\langle 5^4,8\rangle\)   & \(2\) &   \(1\) &        \(0\) &    \(1\)&        \(0\) &  (c) \\
 \(76\) & \(-169752\) &\(8,3,11,643\) &  \((000000)\) & a.1                    & \(\langle 5^3,3\rangle\)   & \(2\) &   \(1\) &        \(0\) &    \(1\)&        \(0\) &  (c) \\
\hline
\end{tabular}
\end{center}
\end{table}

%
%
%
%

\newpage

%
%
%
%

\renewcommand{\arraystretch}{1.0}

\begin{table}[ht]
\caption{{\sf The group \(\mathrm{G}_5^{(\infty)}{{M}}\) of \({M}=\mathbb{Q}\left((\zeta_5-\zeta_5^{-1})\sqrt{d}\right)\)  with \(-200000<d<-175000\)}}
\label{tbl:CycQrtFld5}
\begin{center}
\begin{tabular}{|r|r|c||c|l||c|c||c|c|c|c|c|}
\hline
 No.    & \multicolumn{2}{|c||}{Discriminant} & \multicolumn{2}{|c||}{Principalization} & \(\mathrm{G}_5^{(\infty)}{{M}}\) & \(\ell_5{M}\) & \multicolumn{5}{|c|}{Invariants} \\
        &       \(d\) & Factors       & \(\varkappa\) & Type                   &                            &       & \(r_1\) & \(\delta_1\) & \(r_2\) & \(\delta_2\) & Case \\
\hline
 \(77\) & \(-175076\) &\(4,11,23,173\)&  \((000000)\) & a.1                    & \(\langle 5^3,3\rangle\)   & \(2\) &   \(1\) &        \(0\) &    \(1\)&        \(0\) &  (c) \\
 \(78\) & \(-176459\) &         prime &  \((000000)\) & a.1                    & \(\langle 5^3,3\rangle\)   & \(2\) &   \(1\) &        \(0\) &    \(1\)&        \(0\) &  (c) \\
 \(79\) & \(-177428\) &   \(4,44357\) &  \((100000)\) & a.2, fixed point       & \(\langle 5^4,8\rangle\)   & \(2\) &   \(1\) &        \(0\) &    \(1\)&        \(0\) &  (c) \\
 \(80\) & \(-180583\) & \(13,29,479\) &  \((000000)\) & a.1                    & \(\langle 5^3,3\rangle\)   & \(2\) &   \(1\) &        \(0\) &    \(1\)&        \(0\) &  (c) \\
 \(81\) & \(-181847\) &   \(43,4229\) &  \((000000)\) & a.1                    & \(\langle 5^3,3\rangle\)   & \(2\) &   \(1\) &        \(0\) &    \(1\)&        \(0\) &  (c) \\
 \(82\) & \(-182968\) &   \(8,22871\) &  \((000050)\) & a.2, fixed point       & \(\langle 5^4,8\rangle\)   & \(2\) &   \(1\) &        \(0\) &    \(1\)&        \(0\) &  (c) \\
 \(83\) & \(-185883\) &   \(3,61961\) &  \((000000)\) & a.1                    & \(\langle 5^3,3\rangle\)   & \(2\) &   \(1\) &        \(0\) &    \(1\)&        \(0\) &  (c) \\
 \(84\) & \(-186187\) &         prime &  \((000400)\) & a.2, fixed point       & \(\langle 5^4,8\rangle\)   & \(2\) &   \(1\) &        \(0\) &    \(1\)&        \(0\) &  (c) \\
 \(85\) & \(-186271\) &         prime &  \((000050)\) & a.2, fixed point       & \(\langle 5^4,8\rangle\)   & \(2\) &   \(1\) &        \(0\) &    \(1\)&        \(0\) &  (c) \\
 \(86\) & \(-190387\) &         prime &  \((000000)\) & a.1                    & \(\langle 5^3,3\rangle\)   & \(2\) &   \(1\) &        \(0\) &    \(1\)&        \(0\) &  (c) \\
 \(87\) & \(-193483\) &  \(191,1013\) &  \((000000)\) & a.1                    & \(\langle 5^3,3\rangle\)   & \(2\) &   \(1\) &        \(0\) &    \(1\)&        \(0\) &  (c) \\
 \(88\) & \(-193571\) &   \(7,27653\) &  \((000000)\) & a.1                    & \(\langle 5^3,3\rangle\)   & \(2\) &   \(1\) &        \(0\) &    \(1\)&        \(0\) &  (c) \\
 \(89\) & \(-194487\) & \(3,241,269\) &  \((000000)\) & a.1  \(\uparrow\)      & \(\langle 5^4,7\rangle\)   & \(2\) &   \(1\) &        \(0\) &    \(1\)&        \(0\) &  (c) \\
 \(90\) & \(-196648\) &  \(8,47,523\) &  \((000000)\) & a.1                    & \(\langle 5^3,3\rangle\)   & \(2\) &   \(1\) &        \(0\) &    \(1\)&        \(0\) &  (c) \\
 \(91\) & \(-196707\) &\(3,7,17,19,29\)& \((000050)\) & a.2, fixed point       & \(\langle 5^4,8\rangle\)   & \(2\) &   \(1\) &        \(0\) &    \(1\)&        \(0\) &  (c) \\
 \(92\) & \(-197752\) & \(8,19,1301\) &  \((000000)\) & a.1                    & \(\langle 5^3,3\rangle\)   & \(2\) &   \(1\) &        \(0\) &    \(1\)&        \(0\) &  (c) \\
 \(93\) & \(-199947\) &\(3,11,73,83\) &  \((000400)\) & a.2, fixed point       & \(\langle 5^4,8\rangle\)   & \(2\) &   \(1\) &        \(0\) &    \(1\)&        \(0\) &  (c) \\
\hline
\end{tabular}
\end{center}
\end{table}\bigskip\bigskip\bigskip

%
%
%
%

%
%
%
%
%
%
%
%

\section{Acknowledgements}
\label{s:Acknowledgements}

\noindent
The third author gratefully acknowledges that his research was supported by the
Austrian Science Fund (FWF): P 26008-N25.

%
%
%
%
%
%
%
%


\end{document}